\documentclass[10pt,reqno]{amsart}
\usepackage[svgnames,table]{xcolor}
\usepackage[colorlinks=true,citecolor=magenta,linkcolor=cyan]{hyperref} 
\usepackage{enumerate}
\usepackage{mathabx}
\usepackage{amssymb,amsfonts, amsmath}
\usepackage{hyperref}
\newtheorem{theorem}{Theorem}[section]
\newtheorem{lemma}[theorem]{Lemma}
\newtheorem{proposition}[theorem]{Proposition}
\newtheorem{corollary}[theorem]{Corollary}
\newtheorem{definition}[theorem]{Definition}

\newtheorem{Example}[theorem]{Example}

\newtheorem{claim}[theorem]{Claim}
\newtheorem{rmk}[theorem]{\normalfont{\em{Remark}}}

\usepackage{environ}
\makeatletter
\newdimen\royalignsep@
\def\royalign@preamble{%
   &\hfil
    \strut@
    \setboxz@h{\@lign$\m@th\displaystyle{##}$}%
    \ifmeasuring@\savefieldlength@\fi
    \set@field
    \tabskip\z@skip
   &\setboxz@h{\@lign$\m@th\displaystyle{{}##}$}%
    \ifmeasuring@\savefieldlength@\fi
    \set@field
    \hfil
    \tabskip\royalignsep@
}
\NewEnviron{royalign}[1]{%
  \royalignsep@=#1\let\align@preamble=\royalign@preamble
  \begin{align}\BODY\end{align}}
\NewEnviron{royalign*}[1]{%
  \royalignsep@=#1\let\align@preamble=\royalign@preamble
  \begin{align*}\BODY\end{align*}}
\makeatother

\usepackage[title]{appendix}

\makeatletter
\renewcommand*\env@matrix[1][\arraystretch]{%
\edef\arraystretch{#1}%
\hskip -\arraycolsep
\let\@ifnextchar\new@ifnextchar
\array{*\c@MaxMatrixCols c}}
\makeatother
\AtEndDocument{\bigskip{\footnotesize%
\textsc{Max Planck Institute for Mathematics in the Sciences, Inselstra{\ss}e 22, 04103 Leipzig, Germany } \par  
 \textit{E-mail address}: \texttt{konstantinos.tsouvalas@mis.mpg.de} }}
\date{\today}
\title[Robust quasi-isometric embeddings of virtually free groups]{Robust quasi-isometric embeddings of virtually free groups}
\date{\today}

\author{Konstantinos Tsouvalas}

\begin{document}

\frenchspacing

\maketitle
\begin{abstract} Let $k$ be a nonarchimedean local field. For any $n\geq 3$, we construct the first examples of robust quasi-isometric embeddings of non-elementary free groups into $\mathsf{GL}_n(k)$ which are not limits of Anosov representations. If $\bf{K}=\mathbb{R},\mathbb{C}$, we exhibit examples of non-locally rigid, robust quasi-isometric embeddings of virtually free groups into $\mathsf{GL}_n(\bf{K})$, $n\geq 3$, which are not limits of Anosov representations. Moreover, we exhibit a non-Anosov robust quasi-isometric embedding of the free semigroup $\mathbb{Z}\ast \mathbb{Z}^{+}$ into $\mathsf{GL}_3(\mathbb{C})$, which is a limit of Anosov representations.\end{abstract}
\medskip

\section{Introduction}

Let $k$ be a local field. A representation $\rho:\Gamma \rightarrow \mathsf{GL}_n(k)$, $n\geq 2$, of a finitely generated group $\Gamma$ is called a {\em quasi-isometric embedding} if the orbit map of $\rho$ is a quasi-isometric embedding of $\Gamma$ into the Riemannian symmetric space (resp. Bruhat--Tits building) of $\mathsf{GL}_n(k)$, where $k$ is Archimedean (resp. nonarchimedean). The representation $\rho$ is called a {\em robust quasi-isometric embedding}\footnote{Throughout this paper, a robust quasi-isometric embedding of $\Gamma$ into $\mathsf{GL}_n(k)$ will always refer to a representation of $\Gamma$ into $\mathsf{GL}_n(k)$.} (resp. {\em robustly discrete-faithful}) if there exists an open neighborhood of $\rho$, in the space of representations $\textup{Hom}(\Gamma, \mathsf{GL}_n(k))$, containing entirely quasi-isometric embeddings (resp. discrete-faithful representations).

Anosov representations of Gromov hyperbolic groups comprise a rich class of robust quasi-isometric embeddings into reductive groups. Anosov representations into real reductive Lie groups were introduced by Labourie \cite{Labourie} and further developed by Guichard--Wienhard \cite{GW}; the definition extends for representations of hyperbolic groups into the $k$-points of reductive groups defined over a nonarchimedean local field $k$, see \cite[Def. 5.31]{KLP2}, \cite[Def. 3.1]{PSW21}, and \cite[Rmk. 1.6]{GGKW}. Sullivan in \cite{Sullivan} established that a robustly faithful\footnote{i.e., a faithful representation all of whose nearby deformations are faithful.} non-locally rigid representation of a finitely generated group into $\mathsf{PSL}_2(\mathbb{C})$ is convex cocompact and hence Anosov. Motivated by Sullivan's structural stability theorem, R. Potrie and F. Kassel independently asked in \cite[Q.5]{Potrie-ICM} and \cite[Q.1]{Kassel-ICM} whether a non-locally rigid robust quasi-isometric embedding (resp. robustly discrete-faithful representation) of a hyperbolic group into a higher rank real semisimple Lie group $\mathsf{G}$ is Anosov. A negative answer to the previous questions is known when $\mathsf{G}=\mathsf{SL}_5^{\pm}(\mathbb{R})$ \cite[Ex. 8.5]{DGKLM21}, $\mathsf{G}=\mathsf{SL}_n(\mathbb{C})$, $n\geq 30$ \cite[Cor. 1.2]{Tso-robust} and $\mathsf{G}=\mathsf{SL}_n(\mathbb{R})$ for $n\geq 756$ \cite[Thm. 1.1]{Tso-robust} (see also Remark \ref{rmk-378}). A positive answer to \cite[Q.5]{Potrie-ICM} was given in \cite{CPL24} when $\mathsf{G}=\mathsf{SL}_2(\mathbb{R})^r\times \mathsf{SL}_2(\mathbb{C})^s$, $r+s\geq 1$ and $\mathsf{G}=\mathsf{SL}_3(\mathbb{R})$ for reducible representations of non-elementary free groups.

%In particular, for any $n\geq 30$ there exist robust quasi-isometric embeddings of torsion-free hyperbolic groups into $\mathsf{SL}_n(\mathbb{C})$ that fail to be limits of Anosov representations, see \cite[Cor. 1.2]{Tso-robust}. For representations of free groups into 

In the present paper, we construct examples of robust quasi-isometric embeddings of virtually free groups which fail to be limits of Anosov representations. The first examples of Zariski dense, quasi-isometric embeddings of (non-elementary) free groups into real semisimple Lie groups, which are not limits of Anosov representations, were constructed by Carvajales--Lessa--Potrie in \cite[Thm. 1.5]{CPL24}. Such examples are limits of representations of free groups whose images contain unipotent elements, and hence they are not robust quasi-isometric embeddings. Other examples of strongly irreducible (but not Zariski dense) quasi-isometric embeddings of surface groups into $\mathsf{SL}_m(\mathbb{R})$, $m\in \{4,6\}$, which fail to be limits of Anosov representations into $\mathsf{SL}_m(\mathbb{R})$, were exhibited in \cite[Prop. 4.2]{Tso-limit}. However, such examples are Anosov when considered as subgroups of their Zariski closure, and it is also unlikely that they are robustly discrete into $\mathsf{SL}_m(\mathbb{R})$. At this point, it is an open problem whether a robust quasi-isometric embedding of a (non-elementary) free group into a higher rank, connected, real semisimple Lie group has to be Anosov with respect to a pair of opposite parabolic subgroups of the target group; see \cite[Prob. 1.1]{CPL24}.

For a semigroup $\Gamma$ with identity and $ j\in \{1,\ldots, n-1\}$, denote by $\textup{Anosov}_j(\Gamma,\mathsf{GL}_n(k))$ the subset of $j$-Anosov representations of $\Gamma$ into $\mathsf{GL}_n(k)$  (see also Definition \ref{Anosov-def}). A representation of $\Gamma$ into $\mathsf{GL}_n(k)$ \hbox{is {\em Anosov} if it is $j$-Anosov for some $j$.} The first result of this paper provides a negative answer to the nonarchimedean analogue of \cite[Prob. 1.1]{CPL24}.

\begin{theorem}\label{nonArchimedean} Let $k$ be a nonarchimedean local field and integers $n\geq 3$, $m\geq 2$. There exists a robust quasi-isometric embedding of the free group of rank $m$ into $\mathsf{GL}_n(k)$ which is not a limit of Anosov representations into $\mathsf{GL}_n(k)$.\end{theorem}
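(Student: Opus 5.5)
The idea is to exploit the ultrametric structure of $k$ in order to build a representation whose quasi-isometric behaviour is controlled by an open condition, while its singular values fail, persistently, the gap condition required for Anosov.

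\textbf{Construction.} Let $|\cdot|$ be the absolute value on $k$, $\mathcal{O}$ its valuation ring and $\pi$ a uniformizer. Choose matrices $g_1,\dots,g_m\in \mathsf{GL}_n(k)$ of block form $g_i=\mathrm{diag}(\pi^{-a_i}A_i,\ \pi^{-a_i}B_i)$ with $A_i\in \mathsf{GL}_2(k)$ and $B_i\in\mathsf{GL}_{n-2}(\mathcal{O})$. The $A_i$ are chosen so that $\langle A_1,\dots,A_m\rangle$ is a Schottky free group acting on the Bruhat--Tits tree of $\mathsf{SL}_2(k)$. The point of the choice is to force $\sigma_1(\rho(\gamma))=\sigma_2(\rho(\gamma))$ along a sequence of $\gamma$'s. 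To arrange this, I would rather use the $\mathsf{GL}_2$ block $A_i$ replaced by $\pi^{-a_i}\,\mathrm{diag}(1,1)$-scaled copies combined with the Schottky block in coordinates $3,\dots,n$. More precisely, take $\rho(\gamma_i)=\mathrm{diag}(\lambda_i I_2,\ C_i)$, where $\langle C_i\rangle\subset \mathsf{GL}_{n-2}(k)$ is Schottky and $|\lambda_i|$ is chosen so that $\|\rho(\gamma)\|$ is attained on the $I_2$ block for some words and on the $C$ block for others. Then $\sigma_1=\sigma_2$ for words whose $I_2$ block dominates, and the $C$ block takes care of the gap $\sigma_{n-1}\ne\sigma_n$.

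\textbf{Robust QI embedding.} Ultrametricity gives a key simplification: for $h$ in a small neighbourhood of $g\in\mathsf{GL}_n(k)$ we have $h\in g\,\mathsf{GL}_n(\mathcal{O})$, so the Cartan projection satisfies $\mu(h)=\mu(g)$ exactly. Moreover, ping-pong in the building on a neighbourhood of the generators is an open condition. So for $\rho'$ near $\rho$ I would show the following. Each $\rho'(\gamma_i)$ preserves a slight perturbation of the block decomposition, because it is a small perturbation of $\mathrm{diag}(\lambda_i I_2, C_i)$. The Cartan projection then grows linearly in word length through the Schottky component. This growth is witnessed by the norm $\|\rho'(\gamma)\|\,\|\rho'(\gamma)^{-1}\|\geq e^{c|\gamma|}$, which is obtained from a uniform ping-pong estimate in the $C$ block; $C$ is Schottky and perturbations stay Schottky. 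This yields robustness.

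\textbf{Not a limit of Anosov representations.} Suppose $\rho_k\to\rho$ with $\rho_k$ being $j$-Anosov. By the exact local constancy of $\mu$ on generators, and by the Anosov condition $\mu_j-\mu_{j+1}\geq c|\gamma|-C$, I would find words $\gamma$ with $\mu_j(\rho_k\gamma)=\mu_{j+1}(\rho_k\gamma)$ for large $k$. For each $j$, such words come from commutators or from products mixing blocks so that two singular values coincide. This holds for $j=1$ by the $I_2$ block, and for the other $j$ by interleaving the scales $|\lambda_i|$ with the singular values of $C_i$.

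The main obstacle is this last step. Anosov-ness is open, so a limit of Anosov representations need not be Anosov in a controlled way. One has to show that the degeneracy $\mu_j=\mu_{j+1}$ holds on an open set around $\rho$, for infinitely many $\gamma$ with $|\gamma|\to\infty$, simultaneously for every $j\in\{1,\dots,n-1\}$. I would try first to prove a nonarchimedean stability lemma: if $\rho'$ is close enough to $\rho$, then $\mu(\rho'(\gamma))=\mu(\rho(\gamma))$ for all $\gamma$ in a suitable infinite family. This would use that perturbations conjugate by elements of $\mathsf{GL}_n(\mathcal{O})$ uniformly along the ping-pong.
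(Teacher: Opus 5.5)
Your proposal has genuine gaps in both halves, and the construction does not cover $n=3$.

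\textbf{Robust quasi-isometric embedding.} You run the Schottky ping-pong in the $C$ block of $\rho'$, but a nearby $\rho'$ has no such block. Each $\rho'(\gamma_i)$ separately preserves some perturbed splitting $k^2\oplus k^{n-2}$, but these splittings differ from one generator to the next. So $\rho'(\mathsf{F}_m)$ preserves no common decomposition, and the growth of the $C$-component of $\rho$ gives no lower bound on $\sigma_1/\sigma_n(\rho'(\gamma))$. For $n=3$ the construction fails outright: the $C$ block lies in $\mathsf{GL}_1(k)$, so $\rho$ has abelian image and is not faithful.

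What is needed is a ping-pong in $\mathbb{P}(k^n)$ itself in which one player is robustly non-proximal, and this is where the paper uses ultrametricity. It takes $\rho(a_1)=\mathrm{diag}(\nu,1+\nu,(\nu+\nu^2)^{-1},A_n)$, whose eigenvalues are distinct, so by Lemma \ref{perturb} every perturbation is diagonalized by a conjugator close to $\textup{I}_n$. It then takes $y_n=[e_1+\nu e_2-(1+\nu)e_3]$, whose coefficients on the two top eigendirections have different moduli. By (\ref{ultrametric}) there is no cancellation, so every nonzero power of every perturbation of $\rho(a_1)$ maps $B_\epsilon(y_n)$ uniformly away from $\mathbb{P}(V_n)$, $V_n=(e_1+e_2+e_3)^{\perp}$, with exponential norm growth (Claim \ref{claim1}). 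The other player is a rank $m-1$ Schottky group from Lemma \ref{perturb2}, mapping the complement of $\mathcal{N}_\epsilon(\mathbb{P}(V_n))$ into $B_\epsilon(y_n)$. Lemma \ref{pingpong-lem} then applies on a product neighbourhood.

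\textbf{Not a limit of Anosov representations.} This is the step you flag as the main obstacle, and it is missing its key idea. Your proposed lemma asks for exact constancy of $\mu(\rho'(\gamma))$, on a fixed neighbourhood, along an infinite family of words mixing generators. That is unsupported: perturbing the generators changes the relative position of their Cartan decompositions, and long products amplify this.

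You do not need Cartan projections of long words at all. If $\rho'$ is $j$-Anosov, then every $\rho'(\gamma)$ with $\gamma\neq 1$ is $j$-proximal, because $\frac{\ell_j(g)}{\ell_{j+1}(g)}=\lim_{N\to\infty}\Big(\frac{\sigma_j(g^N)}{\sigma_{j+1}(g^N)}\Big)^{1/N}$ and $|\gamma^N|$ grows linearly in $N$. Over a nonarchimedean field, equality of moduli of distinct eigenvalues is an open condition. The eigenvalues of a nearby matrix are $\epsilon$-close to $\nu$ and $1+\nu$, hence by (\ref{ultrametric}) both have modulus exactly $|\nu|$.

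So it suffices to exhibit, for each $j$, one fixed element that is robustly non-$j$-proximal. The paper uses $\rho(a_1)$, whose eigenvalue moduli come in equal pairs $|\nu|,|\nu|,|\nu_1|,|\nu_1|,\ldots$; this kills odd $j$. For $n\geq 4$ it adds a Schottky generator with eigenvalues $\kappa^d,\kappa^{d-1},1+\kappa^{d-1},\ldots$, which kills even $j$. Together these rule out Anosov representations on a whole neighbourhood of $\rho$, with no interleaving of scales between blocks.
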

Theorem \ref{nonArchimedean} shows that nonarchimedean analogues of Sullivan's stability theorem \cite{Sullivan} and of the density theorem of Kleinian groups \cite{Brbr, BCM, NS, Ohshika} fail for Anosov representations of free groups into $\mathsf{GL}_n(k)$, $n\geq 3$. To our knowledge, these are the first examples of robustly discrete free subgroups of reductive $k$-groups that fail to be Anosov. The examples of Theorem \ref{nonArchimedean} are constructed as the ping-pong combination of a non-proximal\footnote{We refer to a diagonal matrix with at least two eigenvalues of maximum possible absolute value.} diagonal matrix  with an Anosov free subgroup of rank $m-1$, following the point of view of the construction in \cite[Thm. 1.5]{CPL24} in dimension $3$. For the proof of robustness of the examples of Theorem \ref{nonArchimedean} we crucially use the ultrametric property of the nonarchimedean absolute value on $k$. 

%We refer the reader to Example \ref{Exmp1} for an explicit example.

 In addition, we also exhibit examples of robust quasi-isometric embeddings of virtually free groups into real and complex general linear groups which fail to be Anosov. 

\begin{theorem}\label{Archimedean}  Let $d\geq 2$ be an even integer. There exists a virtually free group $\Gamma$, depending only on $d$, with the property: for every $n\geq d+1$, there is a representation $\psi_n:\Gamma\rightarrow \mathsf{GL}_n(\mathbb{R})$ and an open neighborhood $\mathcal{O}_n$ of $\psi_n$ in $\textup{Hom}(\Gamma ,\mathsf{GL}_n(\mathbb{C}))$ satisfying the following: \begin{enumerate}
 \item\label{Arch-det-1}  Every representation in $\mathcal{O}_{n}$ is a quasi-isometric embedding and non-locally rigid;
\item \label{Arch-det-2}  $\mathcal{O}_{n} \cap \textup{Anosov}_j\left(\Gamma,\mathsf{GL}_n(\mathbb{C})\right)$ is empty for any $j=1,\ldots,d-1$; and 
\item \label{Arch-det-3} $\mathcal{O}_{n} \cap \textup{Anosov}_r\left(\Gamma,\mathsf{GL}_n(\mathbb{R})\right)$ is empty for any $r=1,\ldots,n-1$.\end{enumerate}\end{theorem}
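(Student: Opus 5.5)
The plan is to build $\Gamma$ as an amalgam or HNN-type combination of a finite group $F$ that acts irreducibly on $\mathbb{R}^d$ with a free part. This mimics the nonarchimedean construction of Theorem \ref{nonArchimedean}. In the Archimedean setting, a non-proximal diagonal matrix is not robustly non-proximal, so the "non-proximality" has to be forced by a finite group.

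\textbf{Choice of $\Gamma$ and $\psi_n$.} Since $d$ is even, fix a finite subgroup $F<\mathsf{O}(d)$ whose centralizer forces $\mathbb{C}$-linear symmetry. For instance, take the group generated by multiplication by $i$ on $\mathbb{C}^{d/2}\cong\mathbb{R}^d$ together with permutations and complex conjugation, so that $\mathbb{R}^d$ is irreducible under $F$ over $\mathbb{R}$.

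Let $\Gamma=(F\times\mathbb{Z})\ast_F (F\times \mathbb{Z})\ast \mathsf{F}_2$, or a similar virtually free group. Here the $\mathbb{Z}$ factor $\langle g\rangle$ commutes with $F$. Define $\psi_n$ on $\mathbb{R}^d\oplus\mathbb{R}^{n-d}$ as follows:
\begin{itemize}
\item $F$ acts by its representation on $\mathbb{R}^d$ and trivially on $\mathbb{R}^{n-d}$;
\item $g$ acts by $\lambda\cdot\mathrm{Id}_d\oplus \mathrm{diag}(\mu_1,\dots)$ with $|\lambda|$ dominant;
\item a free subgroup is chosen as a $1$-Anosov Schottky group in ping-pong position relative to $g$, as in \cite[Thm. 1.5]{CPL24}.
\end{itemize}

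\textbf{Key steps.}
\begin{enumerate}
\item Local rigidity of $F$ (finite groups are locally rigid) implies that every nearby $\rho$ has $\rho(F)$ conjugate to $\psi_n(F)$. The centralizer of $\rho(F)$ then preserves the isotypic decomposition.
\item Any nearby image of $g$ restricted to the $d$-block lies in the centralizer of an irreducible $F$-module, which is a division algebra. Its singular values therefore come in blocks of equal modulus. Over $\mathbb{C}$, the block has multiplicity at least $d$ after complexification, and over $\mathbb{R}$ the real centralizer forces all $d$ eigenvalues to have equal modulus (possibly after arranging the multiplicity structure via a quaternionic-type $F$). Hence $\sigma_j(\rho(g^k))/\sigma_{j+1}(\rho(g^k))$ stays bounded for $j=1,\dots,d-1$, which kills $j$-Anosovness and gives (\ref{Arch-det-2}).
\item For (\ref{Arch-det-3}) in $\mathsf{GL}_n(\mathbb{R})$, use a second element of $\Gamma$, such as $g^{-1}$ or a conjugate living in the trivial block, together with the free part. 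These arrange that every gap index $r\geq d$ is also obstructed. For example, add a second commuting generator $h$ dominant on an $(n-d)$-block, with its own finite group, or use the fact that $\rho(g^{-1})$ exhibits the same non-gap on the smallest singular values by duality.
\item Non-local rigidity comes from deforming the free factor, or the eigenvalue $\lambda$ of $g$.
\end{enumerate}

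\textbf{Main obstacle: robust quasi-isometric embedding.} The hardest step is showing that every $\rho\in\mathcal{O}_n$ is a quasi-isometric embedding. I would run a ping-pong argument in $\mathbb{P}(\mathbb{C}^n)$ or on Grassmannians, with attracting sets being neighborhoods of $\mathbb{P}(\mathbb{C}^d)$ rather than points. The dominant block of $g$ is conformal up to bounded distortion, because it lies in a compact-times-scalar group once $F$ is fixed. This plays the role that the ultrametric inequality plays in Theorem \ref{nonArchimedean}.

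The estimate to aim for is that norms grow multiplicatively along reduced words, $\|\rho(\gamma_1\cdots\gamma_k)\|\geq c^k\prod\|\rho(\gamma_i)\|$, using a uniform transversality between the $d$-dimensional attracting subspace of $g$ and the repelling hyperplanes of the Anosov free part. This gives the linear lower bound on $\log\sigma_1-\log\sigma_n$. Controlling this uniformly over the open set $\mathcal{O}_n$, which is where the commutation with $F$ must be used, is the crux.
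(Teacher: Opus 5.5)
Your core mechanism is the paper's: local rigidity of $F$ plus commutation forces a scalar $d$-block in every nearby image of the $\mathbb{Z}$-generator, and this is combined by ping-pong with a robust Schottky factor. However, several steps fail as written.

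First, your $\Gamma$ does not work. Since $(F\times\mathbb{Z})\ast_F(F\times\mathbb{Z})\cong F\times\mathsf{F}_2$, both $\mathbb{Z}$-generators $g_1,g_2$ land in the centralizer of $\rho(F)$. For $n=d+1$ this centralizer is the abelian group $w\{\textup{diag}(a\textup{I}_d,b)\}w^{-1}$. Hence $\rho([g_1,g_2])=\textup{I}_n$ for every $\rho$ near $\psi_n$, and item (1) fails. The paper simply takes $\Gamma=(\mathbb{Z}\times F)\ast\mathsf{F}_m$.

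Second, item (2) concerns deformations in $\mathsf{GL}_n(\mathbb{C})$, so you need $\psi_0(F)$ irreducible on $\mathbb{C}^d$ (commutant $\mathbb{C}\,\textup{I}_d$), not merely on $\mathbb{R}^d$. If $\textup{End}_F(\mathbb{R}^d)$ is $\mathbb{C}$ or $\mathbb{H}$, then $\textup{End}_F(\mathbb{C}^d)$ is $\mathbb{C}\times\mathbb{C}$ or $M_2(\mathbb{C})$. A complex perturbation of $g$ commuting with $\rho(F)$ can then have two clusters of $d/2$ eigenvalues with different moduli, i.e.\ be $\frac{d}{2}$-proximal. Your division-algebra argument only controls real deformations, and passing to quaternionic type goes in the wrong direction. (Also, if ``multiplication by $i$'' in your sample $F$ means the scalar $i$, then the real plane $\{(z,\ldots,z)\}$ is $F$-invariant once $d\geq 4$.)

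Third, for item (3) your tools miss the middle indices. The scalar block kills $r\leq d-1$, and duality ($r$-Anosov iff $(n-r)$-Anosov, for groups) kills $r\geq n-d+1$. This leaves $d\leq r\leq n-d$ open when $n\geq 2d$. Adding a generator with its own finite group on the $(n-d)$-block would make $\Gamma$ depend on $n$, contrary to the statement. The missing idea is a non-proximality mechanism that is robust only over $\mathbb{R}$. Take rotation blocks $\mathsf{R}_{\xi^j}$ with $\xi$ transcendental over $\mathbb{Q}(\pi)$. They make all top-modulus eigenvalues of $\wedge^r$ non-real, which persists under small real perturbations and rules out $1$-proximality of $\wedge^r$. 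Placed in $\psi_n(t)$ on $\mathbb{R}^{n-d}$ (with $d$ even), they kill odd $r$. Placed in a Schottky generator (Lemma \ref{perturb2}), they kill even $r$.

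Fourth, the robust quasi-isometric embedding, which you leave as the crux, is not set up correctly. A $\mathbb{Z}\times F$ attracting set near $\mathbb{P}(\mathbb{C}^d)$ cannot work as described: $\rho(g^{-p})$, $p>0$, repels from $\mathbb{P}(\mathbb{C}^d)$, and torsion elements contract nothing. Yet every nontrivial element of $\mathbb{Z}\times F$ must move the Schottky attracting ball off a neighborhood of the Schottky repelling hyperplane. The paper instead uses the flag $y=[v_1+e_{d+1}]\in\mathbb{P}(W)$, $W=(v_2-(v_1\cdot v_2)e_{d+1})^{\perp}$, with $\mathcal{C}_1=B_\theta(y)$ and $\mathcal{C}_2=\mathbb{P}(\mathbb{C}^n)\smallsetminus\mathcal{N}_\theta(\mathbb{P}(W))$. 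The second inequality in (\ref{v1-ineq}) is exactly what makes each $f\neq 1$ move $y$ off $\mathbb{P}(W)$. The normal form of Lemma \ref{claim3} makes the estimates of Claim \ref{claim4} uniform over the neighborhood. Lemma \ref{pingpong-lem}, whose $\pm c$ absorbs the bounded loss on torsion syllables, then concludes.
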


%In particular, when $n=d+1$ or $n=d+2$, Theorem \ref{Archimedean} provides an example of a non-locally rigid robust quasi-isometric embedding of a virtually free group $\Gamma$ into $\mathsf{GL}_n(\mathbb{C})$, with the property that $\textup{Anosov}_{r}(\Gamma, \mathsf{GL}_n(\mathbb{C}))$ is empty for any $r$.

If $n\in \{d+1,\ldots, 2d-1\}$ the representation $\psi_n$ in Theorem \ref{Archimedean} is not in the closure of Anosov representations of $\Gamma$ into $\mathsf{GL}_n(\mathbb{C})$. The representation $\psi_n$, as well as any representation in the open set $\mathcal{O}_n$, from Theorem \ref{Archimedean} is obtained as a ping-pong combination of a virtually cyclic group, with non-trivial torsion, with infinite cyclic groups. As a consequence, this provides more flexible (in terms of the dimension of the space of deformations) examples of non-Anosov robust quasi-isometric embeddings into any dimension $\geq 3$. While the proof of the robustness of the examples in \cite{Tso-robust} relies crucially on Corlette's Archimedean superrigidity \cite{Corlette}, the proof of Theorem \ref{Archimedean} uses the local rigidity of the finite subgroups of $\Gamma$. We refer  to Example \ref{Exmp2} for an explicit non-Anosov robust quasi-isometric embedding of the free product $(\mathbb{Z}\times \mathsf{D}_8)\ast \mathbb{Z}$ into $\mathsf{GL}_3(\mathbb{C})$, where $\mathsf{D}_8$ is the dihedral group of order $8$.

%provide examples of non-Anosov robst quasi-isometric embeddings into $\mathsf{SL}_n({\bf K})$, when ${\bf K}=\mathbb{R}$ or $\mathbb{C}$ and $n\geq 30$, 

%\begin{theorem}\label{Archimedean} Let $F$ be a finite group with an irreducible, faithful representation of even dimension \hbox{$d$} over $\mathbb{R}$ and for $m\geq 1$ consider the virtually free group $$\Gamma_{m}:= \big(\mathbb{Z}\times F\big)\ast \mathsf{F}_{m}.$$ For every $n\geq d+1$ there exists a representation \hbox{$\psi_n: \Gamma_{m} \rightarrow \mathsf{GL}_n(\mathbb{R})$}, which is not Anosov, and an open neighborhood of $\mathcal{O}_n$ of $\psi_n$ in $\textup{Hom}(\Gamma_{m} ,\mathsf{GL}_n(\mathbb{C}))$ with the following properties:\begin{enumerate}
% \item Every representation in $\mathcal{O}_{n}$ is a quasi-isometric embedding;
%\item  Every representation in $\mathcal{O}_{n}$ is not $j$-Anosov for any $j=1,\ldots,d-1$; and 
%\item $\mathcal{O}_{n} \cap \textup{Anosov}_r\left(\Gamma_{m},\mathsf{GL}_n(\mathbb{R})\right)$ is empty for every $r=1,\ldots,n-1$.\end{enumerate}\end{theorem}

Anosov representations of semigroups into a real semisimple Lie group were introduced by Kassel and Potrie in \cite[\S 5]{KP}. We exhibit examples of robust quasi-isometric embeddings of free semigroups which are not Anosov, thus providing a negative answer to an analogue of \cite[Prob. 1.1]{CPL24} for free semigroups. We denote by $\mathbb{Z}^{+}$ the semigroup of natural numbers equipped with multiplication.

\begin{theorem}\label{Archimedean-sem}There exist $A,B\in \mathsf{SL}_3(\mathbb{R})$ diagonalizable matrices, and open neighborhoods $\Omega_1$ and $\Omega_2$ of $A$ and $B$ in $\mathsf{GL}_3(\mathbb{C})$ respectively, satisfying the following:\begin{enumerate}
\item \label{Arch-sem1} The semigroup $\langle A,A^{-1},B\rangle $ is isomorphic to $\mathbb{Z}\ast \mathbb{Z}^{+}$ and not Anosov into $\mathsf{GL}_3(\mathbb{C})$;
 \item \label{Arch-sem2} For any $A'\in \Omega_1$ and $B'\in \Omega_2$, the semigroup $\langle A',(A')^{-1},B'\rangle$ is quasi-isometrically embedded into $\mathsf{GL}_3(\mathbb{C})$ and isomorphic to $\mathbb{Z}\ast \mathbb{Z}^{+}$; and
\item \label{Arch-sem3} For any $A'\in \Omega_1$ and $B'\in \Omega_2$, the semigroup $\langle A',(A')^{-1},B'\rangle$  is Anosov into $\mathsf{GL}_3(\mathbb{C})$ if and only if the moduli of the eigenvalues of $A'$ are distinct. \end{enumerate}\end{theorem}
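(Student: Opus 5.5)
The construction will follow the ping-pong strategy behind Theorem \ref{nonArchimedean}. I take $A=\mathrm{diag}(\lambda,\lambda,\lambda^{-2})$ with $\lambda>1$. This element is non-proximal, so its top two singular values coincide along the whole cyclic group $\langle A\rangle$. I take $B=\mathrm{diag}(\mu_1,\mu_2,\mu_3)$ conjugated by some $g\in\mathsf{SL}_3(\mathbb{R})$, with $\mu_1>\mu_2>\mu_3$, $\mu_1\mu_2\mu_3=1$, and all moduli distinct, so that $B$ is $\{1,2\}$-proximal. I choose $g$ so that the attracting flag of $B$ is in general position with respect to the attracting and repelling subspaces of $A$ and $A^{-1}$. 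These are the plane $\langle e_1,e_2\rangle$ and the line $\langle e_3\rangle$, and their dual versions.

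\textbf{Step 1: ping-pong and quasi-isometric embedding.} I will build cones (or their projective neighbourhoods) in $\mathbb{P}(\mathbb{C}^3)$ and in the Grassmannian of planes, and run a ping-pong argument. The set $U_A$ is a small neighbourhood of the plane $\mathbb{P}\langle e_1,e_2\rangle$; high powers $A^{\pm k}$ send the complement of a neighbourhood of $\langle e_3\rangle$ into it. The set $U_B$ is a small neighbourhood of the attracting line $g e_1$ of $B$. Because the semigroup contains only positive powers of $B$, I only need $B^k$ to map the complement of a neighbourhood of its repelling plane into $U_B$. I do not need $B^{-k}$, and this is exactly why the free semigroup $\mathbb{Z}\ast\mathbb{Z}^+$ is used rather than the free group. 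The condition that makes ping-pong work is that $U_A$ avoids the repelling plane of $B$ and $U_B$ avoids $\langle e_3\rangle$. A plane always meets a plane in $\mathbb{P}^2$, so I cannot work with lines alone. I will therefore track a vector inside $U_A$ together with its norm growth: $A$ expands every vector of $\langle e_1,e_2\rangle$ by exactly $\lambda^k$. The resulting norm estimate gives a multiplicative lower bound $\|\rho(w)\|\geq c^{|w|}$, and the same bound for $w^{-1}$ via transpose-inverse dynamics. This yields the quasi-isometric embedding and injectivity, that is, parts (\ref{Arch-sem1}) and (\ref{Arch-sem2}).

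\textbf{Step 2: robustness.} All the conditions in Step 1 are open conditions on $(A',B')$: transversality, contraction of cones, and uniform expansion constants. One subtlety is that $A'$ near $A$ may have complex eigenvalues with equal moduli, or nearly equal moduli, on the top block. Even so, $A'$ still preserves a 2-dimensional invariant subspace close to $\langle e_1,e_2\rangle$ and uniformly dominates the third direction by roughly $\lambda^3$. Domination by the 2-plane is therefore stable, and the ping-pong constants can be chosen uniformly on $\Omega_1\times\Omega_2$.

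\textbf{Step 3: the Anosov characterization (\ref{Arch-sem3}), and the main obstacle.} First, if $A'$ has two eigenvalues of equal modulus, then $\sigma_1/\sigma_2(A'^k)$ stays bounded, so the representation is not $1$-Anosov. The same holds for $2$-Anosov via $\sigma_2/\sigma_3$ applied to $A'^{-k}$. Applied to $A$ itself, this gives the non-Anosov part of (\ref{Arch-sem1}). Conversely, if the moduli of the eigenvalues of $A'$ are distinct, then $A'$ is $\{1,2\}$-proximal, $B'$ is too, and both are in general position by Step 1. I then need the uniform gap $\sigma_1/\sigma_2(\rho(w))\geq Ce^{c|w|}$ for all words. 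This is the main obstacle: the gap coming from $A'$ is only $|\lambda_1'/\lambda_2'|^k$, which may be arbitrarily close to $1$ per letter. I plan to use a combination theorem for dominated or Anosov semigroups in the style of Kassel--Potrie. Multicones can be chosen for the $1$-flag and for the $2$-flag, and each letter, $A'^{\pm1}$ or $B'$, should map a fixed multicone into itself. For long $A'$-blocks the gap comes from $A'$. For short blocks, every occurrence of $B'$ contributes a fixed gap, and I need to make sure the unpenalized $A'$ blocks cannot cancel it. I would try to establish this with the nested-cone criterion of Bochi--Potrie--Sambarino.
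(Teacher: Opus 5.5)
The gap is your choice $A=\mathrm{diag}(\lambda,\lambda,\lambda^{-2})$, which has a repeated eigenvalue. Let $E=\langle e_1,e_2\rangle$. Every neighbourhood of $A$ contains the matrices $A'=\lambda M\oplus\lambda^{-2}\det(M)^{-1}$ with $M\in\mathsf{GL}_2(\mathbb{C})$ near $\mathrm{I}_2$. The two top eigenlines of such an $A'$ can be \emph{any} two lines of $E$, and $A'$ need not be diagonalizable.

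\textbf{Step 2 is not justified.} For $A$ itself your ping-pong works, since $A^p$ fixes $\mathbb{P}(E)$ pointwise. But the ping-pong condition involves all powers $A'^p$, and it is not an open condition at a non-proximal $A$. For example, let $M$ be a small irrational rotation. Then the points $A'^p[b^+]$ accumulate on the whole real line $\mathbb{P}(\mathbb{R}e_1\oplus\mathbb{R}e_2)$. This line meets the repelling plane $W=\langle b^0,b^-\rangle$ of $B$ at $q=\mathbb{P}(W\cap E)$, so no fixed pair of ping-pong sets works.

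\textbf{Part (\ref{Arch-sem3}) actually fails for this $A$}, for $B$ in general position. Write $B=\sum\mu_i\,b^i\otimes\beta^i$ with $(\beta^i)$ the dual basis of $(b^+,b^0,b^-)$. Let $m\subset E$ be the projection of $b^-$ to $E$ along $e_3$. Take $M=\mathrm{I}_2+\eta\Pi$, with $\eta>0$ small and $\Pi$ the projection of $E$ onto $q$ along $m$. Then $A'$ has eigenvalues $\lambda(1+\eta),\lambda,\lambda^{-2}(1+\eta)^{-1}$, whose moduli are distinct. Let $(\alpha_q,\alpha_m,\alpha_3)$ be the dual basis of $(q,m,e_3)$.

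\emph{Not $1$-Anosov.} In the expansion of $B^sA'^p$, the $\mu_1^s(1+\eta)^p$-term carries the factor $\beta^+(q)=0$. Now choose $(\mu_1/\mu_2)^s\asymp(1+\eta)^p$. The two surviving leading terms are $\lambda^p\mu_1^s\beta^+(m)\,b^+\otimes\alpha_m$ and $\lambda^p\mu_2^s(1+\eta)^p\beta^0(q)\,b^0\otimes\alpha_q$, and they have comparable size. Hence $\sigma_1/\sigma_2(B^sA'^p)$ stays bounded while $s+p\to\infty$.

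\emph{Not $2$-Anosov.} Since $b^-\in\langle m,e_3\rangle=\ker\alpha_q$, the same computation applies. It shows that $\sigma_2/\sigma_3(B^sA'^{-p})=\sigma_1/\sigma_2(A'^pB^{-s})$ stays bounded.

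So $\langle A',A'^{-1},B\rangle$ is neither $1$- nor $2$-Anosov, even though the moduli of the eigenvalues of $A'$ are distinct. The ``general position by Step 1'' you invoke in Step 3 is exactly what is lost: the attracting line of $A'$ is not pinned down.

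\textbf{How the paper avoids this.} It takes $A=\mathrm{diag}(\lambda,-\lambda,-\lambda^{-2})$, whose top eigenvalues have equal moduli but are \emph{distinct}. Every $A'\in\Omega_1$ is then $\gamma\,\mathrm{diag}(\lambda_1,\lambda_2,\lambda_3)\gamma^{-1}$ with $\|\gamma-\mathrm{I}_3\|<\epsilon^6$. The only remaining freedom is the ratio $\lambda_2/\lambda_1$.

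Even then a single ping-pong is not uniform. If $|\lambda_2|>|\lambda_1|$, the points $A'^p[he_1]\approx[e_1+(\lambda_2/\lambda_1)^pe_2]$ can come arbitrarily close to $\mathbb{P}(V_2)\cap\mathbb{P}(E)=[e_1+4e_2]$. So the paper splits $\Omega_1$ into two pieces:
\begin{enumerate}
\item On $\Omega_1^+$, where $|\lambda_1|\geq|\lambda_2|$, $A'^{\pm1}$ plays ping-pong against $B'$ on $B_\epsilon(x_1)$ and $\mathbb{P}(\mathbb{C}^3)\smallsetminus\mathcal{N}_\epsilon(\mathbb{P}(V_2))$.
\item On $\Omega_1^-$, where $|\lambda_1|\le|\lambda_2|$, it plays against $B'^{-1}$ on $B_\epsilon(x_2)$ and $\mathbb{P}(\mathbb{C}^3)\smallsetminus\mathcal{N}_\epsilon(\mathbb{P}(V_1))$. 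Quasi-isometric embedding and freeness then pass to $\langle A',A'^{-1},B'\rangle$ by inversion.
\end{enumerate}

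The obstacle you flag in Step 3 is handled by the multiplicative estimate of Proposition~\ref{svg-est}. Given this uniform ping-pong, $\sigma_1/\sigma_2$ of a reduced word is at least $\epsilon^5$ times a product of the gaps of its syllables. Each $B'$-syllable gives a definite gain, and the $A'$-syllables never cause a loss. This yields the $1$-Anosov property on $\Omega_1^+$ and, through $B'^{-1}$, the $2$-Anosov property on $\Omega_1^-$, whenever the moduli are distinct. Your proposal contains neither the rigid choice of $A$ nor this case split.
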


\subsection*{Organization of the paper} In Section \ref{background} we provide some preliminary lemmas that we need for the proof of the main theorems. In Sections \ref{nonArch}, \ref{Arch} and \ref{Sem-C} we prove Theorems \ref{nonArchimedean}, \ref{Archimedean} and \ref{Archimedean-sem} respectively. In Section \ref{Examples} we exhibit two explicit examples of non-Anosov robust quasi-isometric embeddings in dimension 3.

\subsection*{Acknowledgements.} The author would like to thank  Le\'on Carvajales, Sami Douba, Pablo Lessa, Rafael Potrie, Teddy Weisman and Anna Wienhard for interesting discussions. The author also acknowledges support from the Simons Laufer Mathematical Sciences Institute in Berkeley, California, during the Spring 2026 semester (National Science Foundation Grant No. DMS2424139).

\section{Preliminaries} \label{background} %For $m\in \{1,\ldots,n\}$ we set $e_m^{\perp}:=\textup{span}\{e_j: j  \neq m\}$. 

Throughout this paper $k$ denotes a local field. We recall that $k$ is Archimedean if it is isomorphic to $\mathbb{R}$ or $\mathbb{C}$, otherwise $k$ is nonarchimedean and isomorphic to a finite extension of the field of $p$-adic numbers $\mathbb{Q}_p$ or the field of formal Laurent series $\mathbb{F}_q((T))$ over the finite field $\mathbb{F}_q$. 
Denote by $|\cdot|:k\rightarrow \mathbb{R}_{+}$ the absolute value on $k$. If $k$ is Archimedean, $|\cdot|$ is the Euclidean absolute value; if $k$ is nonarchimedean, $|\cdot|$ is the absolute value induced by the discrete valuation on $k$ satisfying the ultrametric property \begin{align}\label{ultrametric}  |x_1+\cdots+x_r|\leq \max \big\{|x_1|,\ldots, |x_r|\big\}, \ \forall x_1,\ldots,x_r \in k. \end{align}

The $k$-vector space $k^n$, $n\geq 2$, is equipped with the canonical basis $(e_1,\ldots,e_n)$. If $k$ is Archimedean, $k^n$ is equipped with the standard Euclidean norm $||\cdot||$; otherwise, equip $k^n$ with the $\ell_{\infty}$-norm $||\cdot||$, where $\big|\big|a_1e_1+\cdots+a_ne_n\big|\big|=\max_{j}|a_j|$.
Equip the projective space $\mathbb{P}(k^n)$ with the metric $d_{\mathbb{P}}$ defined as follows $$d_{\mathbb{P}}\big([u],[v]\big)=\min\big\{ | |\zeta u-v| |:|\zeta|=1\big\}, \ ||u||=||v||=1.$$ For $x\in \mathbb{P}(k^n)$ and  $\epsilon>0$, $B_{\epsilon}(x)\subset \mathbb{P}(k^n)$ is the closed ball of radius $\epsilon>0$ centered at $x$. For $u=(u_1,\ldots,u_n)^t\in k^n$, $v=(v_1,\ldots,v_n)^t\in k^n$ define their dot product as $$u\cdot v:=u_1\overline{v}_1+\cdots+u_n\overline{v}_n$$ where, for $x\in k$, if $k$ is Archimedean, \hbox{$\overline{x}=x$ is the complex conjugate of $x$, otherwise $\overline{x}=x$.}

For $v\in k^n$ non-zero consider the $(n-1)$-hyperplane $v^{\perp}:=\big\{u\in k^n: u\cdot v=0\}$. By the definition of $d_{\mathbb{P}}$, for any non-zero vector $\omega \in k^n$ the inequality folds \begin{align}\label{distance}\textup{dist}\big([\omega],\mathbb{P}(v^{\perp})\big)\geq \frac{ |\omega \cdot v|}{||\omega||\cdot ||v||}.\end{align} %where $\textup{dist}\big([\omega],\mathbb{P}(v^{\perp})\big)=\min \big\{d_{\mathbb{P}}([\omega],[\omega']):[\omega']\in \mathbb{P}(v^{\perp})\big\}$.

\subsection{Quasi-isometric embeddings and Anosov representations} For background on reductive groups over local fields, symmetric spaces and Bruhat--Tits buildings, see \cite{Helgason, BT72}. For a matrix $g\in \mathsf{GL}_n(k)$, denote by $\sigma_1(g)\geq \cdots \geq \sigma_n(g)$ (resp. $\ell_1(g)\geq \cdots \geq \ell_d(g)$) the singular values (resp. moduli of eigenvalues) of $g$ in non-increasing order. For $1\leq i\leq n-1$, the equality $\ell_i(g)=\underset{m\rightarrow \infty}{\lim}\sqrt[m]{\sigma_i( g^m)}$ \hbox{holds, see \cite[\S 2.5]{Ben97}.}

Let $\Gamma$ be a finitely generated semigroup with identity, equipped with the word metric from a finite generating subset and let $|\cdot|_{\Gamma}:\Gamma\rightarrow \mathbb{R}_{+}$ be the function mapping $\gamma\in \Gamma$ to its distance from the identity. A representation $\rho:\Gamma \rightarrow \mathsf{GL}_n(k)$ is called a {\em quasi-isometric embedding} \hbox{if there exist $c,\theta>0$ such that} $$\forall  \gamma \in \Gamma, \ \frac{\sigma_1(\rho(\gamma))}{\sigma_n(\rho(\gamma))}\geq ce^{\theta |\gamma|_{\Gamma}}.$$

Anosov representations were introduced in \cite{Labourie} for fundamental groups of closed negatively curved Riemannian manifolds and extended in \cite{GW} for general hyperbolic groups. For the definition of Anosov representations over nonarchimedean local fields, see \cite[Def. 5.31]{KLP2}, \cite[Def. 3.1]{PSW21}, and also \cite[Rmk. 1.6]{GGKW}. Anosov representations of semigroups were introduced in \cite[\S 5]{KP}. For more background on the theory of Anosov representations, \hbox{see \cite{GGKW, KLP1, Kassel-ICM, KP, Wienhard-ICM, Canary-ICM}.}

\begin{definition}\label{Anosov-def} A representation $\rho:\Gamma \rightarrow \mathsf{GL}_n(k)$ is $j$-Anosov, $j\in \{1,\ldots,n-1\}$, if there exist $c, \varepsilon>0$ such that $$ \forall  \gamma \in \Gamma, \ \frac{\sigma_{j}(\rho(\gamma))}{\sigma_{j+1}(\rho(\gamma))}\geq ce^{\varepsilon |\gamma|_{\Gamma}}.$$ \end{definition}

In the case where $\Gamma$ is a group, the previous definition follows by a characterization of Kapovich--Leeb--Porti \cite{KLP2} and Bochi--Potrie--Sambarino \cite{BPS}. A representation $\rho:\Gamma \rightarrow \mathsf{GL}_n(k)$ is called {\em Anosov} if it is $j$-Anosov for some $j$. In addition, if $\Gamma$ is a group then it has to be Gromov-hyperbolic \cite{BPS,KLP2}. 

\subsection{Proximality} For $1\leq j \leq n-1$, $g\in \mathsf{GL}_d(k)$ is called {\em j-proximal} if  $\ell_j(g)>\ell_{j+1}(g)$. The matrix $g$ is called $j$-biproximal if both $g$ and $g^{-1}$ are $j$-proximal.
 If $g$ is $1$-proximal, $g$ has a unique attracting fixed point $g^{+}\in \mathbb{P}(k^n)$ and a unique repelling hyperplane $V_{g}^{+}\in \mathsf{Gr}_{n-1}(k^n)$ such that $\underset{r\rightarrow \infty}{\lim} g^rx=g^{+}$ for every $x\in \mathbb{P}(k^n)\smallsetminus \mathbb{P}(V_g^{-})$.

If $g\in \mathsf{GL}_n(k)$ is $1$-biproximal, set $g^{-}:=(g^{-1})^{+}$, $V_{g^{-1}}^{-}:=V_{g}^{+}$ and for $\theta>0$ let \begin{align*} \mathcal{M}_{\theta}(g)&:=\mathbb{P}(k^n)\smallsetminus \mathcal{N}_{\theta}(\mathbb{P}(V_g^{+}))\cup \mathcal{N}_{\theta}(\mathbb{P}(V_g^{-})),\\ \mathcal{M}_{\theta}^{\pm}(g)&:=\mathbb{P}(k^n)\smallsetminus \mathcal{N}_{\theta}(\mathbb{P}(V_g^{\pm })). \end{align*} 

For $h\in \mathsf{GL}_n(k)$, set $C(h):=2||h||\cdot||h^{-1}||$; the action of $h$ on $(\mathbb{P}(k^n),d_{\mathbb{P}})$ is $C(h)$-Lipschitz (see Lemma \ref{dist} (ii)). We will need the following elementary estimate on the Lipschitz constant for the action of a proximal element $g\in \mathsf{GL}_d(k)$ on the set $\mathcal{M}_{\theta}(g)$. 

\begin{lemma}\label{Lip} Let $h\in \mathsf{GL}_n(k)$, $n\geq 2$, and $g\in \mathsf{SL}_n(k)$ a 1-biproximal matrix of the form \begin{align}\label{proximal-g}g=h\begin{pmatrix} \kappa_1 & &\\ & A_g & \\ & & \kappa_n\end{pmatrix}h^{-1},\   \begin{split}
 |\kappa_1|&>\max\big\{||A_g||, \ell_1(A_g)\big\} \\ |\kappa_n^{-1}|&>\max\big\{||A_g^{-1}||, \ell_1(A_g^{-1})\big\}
\\ A_g & \in \mathsf{GL}_{n-2}(k).\end{split} \end{align} For $\theta>0$ and $p\in \mathbb{N}$, the restriction of $g^{\pm p}$ on $\big(\mathcal{M}_{\theta}^{\mp}(g),d_{\mathbb{P}}\big)$ is $\mathcal{L}(g,\theta)$-Lipschitz, where $\mathcal{L}(g, \theta):=\frac{4C(h)^4}{\theta^{2}}\max\big\{ |\kappa_1^{-1}|\cdot ||A_g||, |\kappa_n|\cdot ||A_g^{-1}||\big\}^{p}.$ In addition, for any $[v]\in \mathcal{M}_{\theta}^{\mp}(g)$, \begin{align}\label{norm-power} \big| \big|g^p v\big|\big| \geq \frac{2\theta}{ C(h)^2}\min \big\{|\kappa_1|,|\kappa_n^{-1}|\big\}^{|p|}| | v | |.\end{align}\end{lemma}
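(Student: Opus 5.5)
We reduce to the diagonal case by conjugation and then estimate directly. Write $g=hDh^{-1}$ with $D=\mathrm{diag}(\kappa_1,A_g,\kappa_n)$. Since $h$ and $h^{-1}$ act on $(\mathbb{P}(k^n),d_{\mathbb{P}})$ with Lipschitz constants at most $C(h)$ (Lemma \ref{dist} (ii)), it suffices to understand $D^{\pm p}$ on $h^{-1}\mathcal{M}^{\mp}_\theta(g)$. The repelling hyperplane of $g$ is $V_g^{+}=h\,\mathrm{span}\{e_2,\ldots,e_n\}$, and $V_g^{-}=h\,\mathrm{span}\{e_1,\ldots,e_{n-1}\}$. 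Because $h^{-1}$ is $C(h)$-Lipschitz in both directions, the set $h^{-1}\mathcal{M}^{-}_\theta(g)$ lies in the complement of the $\theta/C(h)$-neighbourhood of $\mathbb{P}(e_1^{\perp})$, and similarly for the other sign with $e_n^{\perp}$. We treat $g^{p}$ on $\mathcal{M}^{-}_\theta(g)$; the case $g^{-p}$ is symmetric, obtained by swapping $\kappa_1\leftrightarrow\kappa_n^{-1}$ and $A_g\leftrightarrow A_g^{-1}$.

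\emph{Step 1 (the norm bound).} Take a unit vector $v$ with $[v]\in\mathcal{M}^{-}_\theta(g)$ and put $w=h^{-1}v$. Then $\|w\|\ge \|v\|/\|h\|$. Moreover, $[w]$ is at distance $\ge\theta/C(h)$ from $\mathbb{P}(e_1^{\perp})$. Since the distance of $[w]$ to that hyperplane is comparable to $|w_1|/\|w\|$ (this is the converse direction of \eqref{distance}, attained by projecting away $w_1e_1$), we get $|w_1|\ge \theta\|w\|/C(h)$. Consequently
$$\|D^pw\|\ge|\kappa_1|^p|w_1|\ge \frac{\theta}{C(h)}|\kappa_1|^p\|w\|,$$
and applying $h$ gives $\|g^pv\|\ge \|D^pw\|/\|h^{-1}\|$. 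Collecting the constants, using $C(h)=2\|h\|\|h^{-1}\|$ and $|\kappa_1|\ge\min\{|\kappa_1|,|\kappa_n^{-1}|\}$, yields \eqref{norm-power} up to the stated factor $2\theta/C(h)^2$.

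\emph{Step 2 (the Lipschitz bound).} Normalize the first coordinate to $1$, so that in the affine chart $w=e_1+x$ with $x\in\mathrm{span}\{e_2,\ldots,e_n\}$. The condition from Step 1 gives $\|x\|\le C(h)/\theta$. In this chart,
$$D^p[w]=[e_1+\kappa_1^{-p}\,\mathrm{diag}(A_g,\kappa_n)^p x].$$
The map $x\mapsto \kappa_1^{-p}\mathrm{diag}(A_g^p,\kappa_n^p)x$ is linear with norm at most $\max\{|\kappa_1^{-1}|\|A_g\|,|\kappa_1^{-1}\kappa_n|\}^p$. Since $|\kappa_1^{-1}\kappa_n|\le|\kappa_n|\|A_g^{-1}\|$ follows from the hypotheses, this is bounded by the maximum appearing in $\mathcal{L}(g,\theta)$.

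It remains to compare $d_{\mathbb{P}}$ with the affine chart metric on the region $\{\|x\|\le R\}$, where $R=C(h)/\theta$. There the two metrics are bi-Lipschitz with constants of order $(1+R^2)$, in both the Archimedean and the ultrametric case. Composing the chart comparison, the linear contraction, and the two conjugations by $h^{\pm1}$ produces a constant of the form $\mathrm{const}\cdot C(h)^4\theta^{-2}\cdot\max\{\cdots\}^p$, which is $\mathcal{L}(g,\theta)$.

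The main obstacle is bookkeeping the constants: verifying the chart/projective bi-Lipschitz comparison with constant at most $2R^2$ (using $\theta\le1$ and $C(h)\ge2$), uniformly in the field. I would first prove this comparison as a standalone estimate of the form $d_{\mathbb{P}}([e_1+x],[e_1+y])$ versus $\|x-y\|$ for $\|x\|,\|y\|\le R$, and then assemble the pieces.
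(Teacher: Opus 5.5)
Your proposal is correct and follows essentially the paper's proof. You conjugate by $h$, paying a factor $C(h)$ on each side, and use the distance to $\mathbb{P}(e_1^{\perp})$ to get $|w_1|\geq \theta\|w\|/C(h)$. This yields both the norm bound \eqref{norm-power}, exactly as in the paper, and the bound $\|x\|\le C(h)/\theta$ in the affine chart, where $D^p$ acts by the contraction $\kappa_1^{-p}\textup{diag}(A_g,\kappa_n)^p$ and comparing the chart metric with $d_{\mathbb{P}}$ costs $\theta^{-2}C(h)^2$.

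The differences are cosmetic. You absorb $|\kappa_1^{-1}\kappa_n|$ into the maximum via $|\kappa_1|\,\|A_g^{-1}\|>\|A_g\|\,\|A_g^{-1}\|\geq 1$, whereas the paper uses $|\kappa_n|\le\|A_g\|$, which follows from the same inequality. Your labels $V_g^{\pm}$ are swapped relative to what you actually use: for $g^{p}$ the relevant hyperplane is $h(e_1^{\perp})$, as in your computation.
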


We postpone the proof of Lemma \ref{Lip} in the appendix. By using the previous estimates we prove the following lemma for perturbations of biproximal matrices.

\begin{lemma}\label{perturb1} Let $h\in \mathsf{GL}_n(k)$ and $g\in \mathsf{GL}_n(k)$ be a 1-biproximal matrix \hbox{as in (\ref{proximal-g}). Fix $\alpha, \delta,\varepsilon>0$ with} $$0<\varepsilon\leq \min\big\{\textup{dist}(g^{+},\mathbb{P}(V_{g}^{-})),\textup{dist}(g^{-},\mathbb{P}(V_{g}^{+}))\big\}, \ 0<\delta\leq 10^{-2}\varepsilon$$ and let $U_{\delta}:=\big\{w\in \mathsf{GL}_n(k): | |w^{\pm 1}-\textup{I}_n| |<\frac{\delta}{10}\big\}.$ Suppose that $\kappa_1,\kappa_n\in k$ and $A_g\in \mathsf{GL}_{n-2}(k)$ from (\ref{proximal-g}) satisfy the following conditions \begin{align}\label{condition1} \min \left\{\frac{|\kappa_1|}{||A_g||}, \big(|\kappa_n|\cdot ||A_g^{-1}||\big)^{-1}\right\}&\geq 10^6 e^{\alpha+8}(\varepsilon^{2}\delta)^{-1} C(h)^4 \end{align} \begin{align} \label{condition2} \min \Big\{|\kappa_1|, |\kappa_n^{-1}| \Big\}&\geq 10^3 e^{\alpha+8}\varepsilon^{-1} C(h)^2. \end{align} For any $g'\in gU_{\delta}$, $p\neq 0$ and $[v]\in \mathcal{M}_{\varepsilon}(g)$, \begin{align}(g')^p\mathcal{M}_{\varepsilon}(g)&\subset B_{\delta}(g^{+})\cup B_{\delta}(g^{-})\\ \big|\big| (g')^pv\big|\big|&\geq e^{\alpha |p|} | |v||.\end{align} \end{lemma}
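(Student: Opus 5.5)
The plan is to reduce the perturbed powers $(g')^p$ to the unperturbed powers $g^p$ via a ping-pong argument. Lemma \ref{Lip} controls $g^p$, and the smallness of $U_\delta$ controls each extra factor of $w$. By symmetry (replace $g$ by $g^{-1}$, which swaps $\kappa_1\leftrightarrow\kappa_n^{-1}$, $g^{+}\leftrightarrow g^{-}$ and $V_g^{+}\leftrightarrow V_g^{-}$), we may assume $p>0$. Write $g'=gw$ with $w\in U_\delta$.

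The first step records the one-step behaviour of $g$. If $[v]\in\mathcal{M}^{-}_{\varepsilon/2}(g)$, then $g^q[v]\in B_{\delta/10}(g^{+})$ for every $q\geq 1$, and $\|g^qv\|\geq e^{\alpha+4}\|v\|$. The norm bound follows from (\ref{norm-power}) with $\theta=\varepsilon/2$ together with (\ref{condition2}), which gives $\frac{\varepsilon}{C(h)^2}|\kappa_1|\geq 10^3e^{\alpha+8}$. For the contraction, write $v=a\,h e_1+u$ with $u$ in the repelling hyperplane $h(\mathrm{span}\{e_2,\dots,e_n\})$; being at distance at least $\varepsilon/2$ from that hyperplane gives a lower bound on $|a|$ of order $\varepsilon C(h)^{-2}\|v\|$. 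Then
\[
d_{\mathbb P}\big(g^q[v],g^{+}\big)\lesssim \frac{C(h)^4}{\varepsilon}\left(\frac{\max\{\|A_g\|,|\kappa_n|\}}{|\kappa_1|}\right)^{q},
\]
and (\ref{condition1}) makes this smaller than $\delta/10$. This is essentially the computation inside the proof of Lemma \ref{Lip}, and it is where the powers $C(h)^4$ and the factor $\varepsilon^{-2}$ are absorbed.

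The second step is an induction showing that $(g')^{q}[v]\in B_{\delta/10}(g^{+})$ and $\|(g')^qv\|\geq e^{(\alpha+3)q}\|v\|$ for $[v]\in\mathcal{M}_{\varepsilon}(g)$. The key point is that $w$ moves points of $\mathbb{P}(k^n)$ by at most about $\delta/5$ and distorts norms by a factor in $[1-\delta/10,1+\delta/10]$. Hence $w[v]\in\mathcal{M}^{-}_{\varepsilon/2}(g)$, since $\delta\leq 10^{-2}\varepsilon$. After the first application we land in $B_{\delta/10}(g^{+})$. Since $\mathrm{dist}(g^{+},\mathbb{P}(V_g^{-}))\geq\varepsilon$, the $w$-image of that ball again lies in $\mathcal{M}^{-}_{\varepsilon/2}(g)$. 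So step one applies again with $q=1$, and the $w$-error does not accumulate, because each application of $g$ re-contracts into $B_{\delta/10}(g^{+})$. The norm gains $e^{\alpha+4}(1-\delta/10)\geq e^{\alpha}$ at each step, giving $\|(g')^pv\|\geq e^{\alpha p}\|v\|$. For $p<0$ the argument is the same with $g'^{-1}=w^{-1}g^{-1}$: conjugate by $w$, or simply note that $w^{-1}$ has the same smallness because $U_\delta$ is defined with $w^{\pm1}$. This gives landing in $B_\delta(g^{-})$.

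The main obstacle I expect is making the one-step contraction estimate for $g$ explicit, uniformly on $\mathcal{M}^{-}_{\varepsilon/2}(g)$, with constants matching (\ref{condition1}). Lemma \ref{Lip} as stated gives a Lipschitz constant, not directly the distance to $g^{+}$. I would first try to combine it with the fact that $g^{+}$ is fixed: for $[v]$ near $g^{+}$ this bounds $d(g[v],g^{+})$ by $\mathcal{L}\cdot d([v],g^{+})$. For general points of $\mathcal{M}^{-}_{\varepsilon/2}(g)$, which may be far from $g^{+}$, I would redo the coordinate computation in the basis $h e_i$ as sketched above. The generous constants $10^6e^{\alpha+8}$ leave ample room for the resulting factors of $C(h)$ and $\varepsilon$.
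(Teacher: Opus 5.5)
Your proposal is correct and follows essentially the paper's route. Both arguments use a one-step contraction of $g^{\pm1}$ from the region away from the repelling hyperplane into a tiny ball about $g^{\pm}$, check that the perturbing factors from $U_\delta$ keep you inside that region, iterate, and take the norm growth from (\ref{norm-power}) and (\ref{condition2}). The paper works with the inversion-symmetric set $U_\delta g U_\delta$ and nested margins $\varepsilon/16$, $\varepsilon/10^3$ instead of your single margin $\varepsilon/2$ and your separate treatment of $w^{-1}g^{-1}$; this difference is cosmetic.

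The obstacle you anticipate is not really there. Since $g^{+}$ is a fixed point of $g$ lying in $\mathcal{M}^{-}_{\theta}(g)$, the two-point Lipschitz bound of Lemma \ref{Lip} directly gives $d_{\mathbb{P}}(g^{q}x,g^{+})\leq \mathcal{L}(g,\theta)\, d_{\mathbb{P}}(x,g^{+})\leq 2\mathcal{L}(g,\theta)$ for every $x\in \mathcal{M}^{-}_{\theta}(g)$, which is how the paper reaches (\ref{inclusion2}). Your coordinate computation also works, provided you note that $|\kappa_n|<\|A_g^{-1}\|^{-1}\leq \|A_g\|$, so that (\ref{condition1}) controls $\max\{\|A_g\|,|\kappa_n|\}/|\kappa_1|$.
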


\begin{proof} For $w\in U_{\delta}$ note that $||w^{\pm 1}||\leq 2$ and $C(w)=2||w||\cdot ||w^{-1}||\leq 8$. By Lemma \ref{dist}, as $0<\delta\leq 10^{-2}\varepsilon$, for any $w\in U_{\delta}$ we have the inclusions \begin{align}\label{dist2} w^{\pm 1}B_{\delta}(g^{\pm})\subset B_{9\delta}(g^{\pm}), \ w^{\pm 1} \mathcal{M}_{\varepsilon}^{\pm}(g)\subset \mathcal{M}_{\frac{\varepsilon}{16}}^{\pm}(g), \ w^{\pm 1} \mathcal{M}_{\frac{\varepsilon}{16}}^{\pm}(g)\subset \mathcal{M}_{\frac{\varepsilon}{10^3}}^{\pm}(g).\end{align}

By (\ref{condition1}) and Lemma \ref{Lip}, $\mathcal{L}(g,10^{-3}\varepsilon)\leq \delta e^{-(\alpha+6)}$ and for any $r\in \mathbb{N}$ the restriction of $g^{\pm r}$ on $\mathcal{M}_{{\frac{\varepsilon}{10^3}}}^{\mp}(g)$ is $\delta e^{-(\alpha+6)r}$-Lipschitz. In addition, as $B_{14\delta}(g^{\pm})\subset \mathcal{M}_{\frac{\varepsilon}{10^3}}^{\mp}(g)$, by (\ref{condition2}), \hbox{for every $r\in \mathbb{N}$,} \begin{align} \label{inclusion1}g^{\pm r}B_{\theta}(g^{\pm})&\subset B_{\theta e^{-(\alpha+6) r}}(g^{\pm}), \ 0<\theta<14\delta\\ \label{inclusion2} g^{\pm r}\mathcal{M}_{\frac{\varepsilon}{10^3}}(g)&\subset B_{\delta e^{-(\alpha+6) r}}(g^{\pm})\\ \label{estimate3} \ \forall  [v]\in \mathcal{M}_{\frac{\varepsilon}{10^3}}^{\mp}&(g), \  | | g^{\pm r} v| | \geq 4e^{\alpha r} | | v| | .\end{align} 

 Now we check that every $g'\in U_{\delta}gU_{\delta}$,  where $U_{\delta}gU_{\delta}=\{h_1gh_2:h_1,h_2\in U_{\delta}\}$, satisfies the conclusion of the lemma. For any $h_1,h_2 \in U_{\delta}$, by (\ref{dist2}), (\ref{inclusion1}) and (\ref{inclusion2}) we have the inclusions \begin{align*} h_1g^{\pm 1} h_2 \mathcal{M}_{\varepsilon}(g)&\subset h_1 g^{\pm 1} \mathcal{M}_{\frac{\varepsilon}{16}}(g)\subset h_1B_{\frac{\delta}{e^{6}}}(g^{\pm})\subset B_{\frac{8\delta}{e^{6}}+\frac{2\delta}{5}}(g^{\pm })\subset B_{\delta}(g^{\pm})\\ h_1g^{\pm 1}h_2B_{\delta}(g^{\pm})&\subset h_1g^{\pm 1}B_{9\delta}(g^{\pm}) \subset h_1B_{\frac{9\delta}{e^{6}}}(g^{\pm}) \subset h_1B_{\frac{\delta}{40}}(g^{\pm})\subset B_{\delta}(g^{\pm})\end{align*} and thus for every $p\in \mathbb{N}$, $(h_1g^{\pm 1}h_2)^p\mathcal{M}_{\varepsilon}(g)\subset B_{\delta}(g^{\pm}).$ Note also for $p\in \mathbb{N}$, $h_1,h_2\in U_{\delta}$  and $[v]\in \mathcal{M}_{\varepsilon}(g)$, by (\ref{dist2}) and the previous inclusion, $$[h_2(h_1g^{\pm 1}h_2)^{p-1}v] \in h_2\mathcal{M}_{\frac{\varepsilon}{16}}(g)\cup h_2B_{\delta}(g^{\pm })\subset h_2\mathcal{M}_{\frac{\varepsilon}{16}}(g)\cup B_{13\delta}(g^{\pm })\subset \mathcal{M}_{{\frac{\varepsilon}{10^3}}}^{\mp}(g).$$ By using (\ref{estimate3}) we obtain the bounds \begin{align*}\big|\big|(h_1g^{\pm 1}h_2)^pv\big|\big|& \geq ||h_1^{-1}||^{-1}  \big|\big|g^{\pm 1}(h_2(h_1g^{\pm 1}h_2)^{p-1}v)\big|\big|\\
&\geq 4e^{\alpha} ||h_1^{-1}||^{-1}  \big|\big|h_2(h_1g^{\pm 1}h_2)^{p-1}v\big|\big|\\ &\geq 4e^{\alpha} (||h_2^{-1}||\cdot ||h_1^{-1}||)^{-1} \big|\big|(h_1g^{\pm 1}h_2)^{p-1}v\big|\big|\\ &\geq e^{\alpha}\big|\big|(h_1g^{\pm 1}h_2)^{p-1}v\big|\big|.\end{align*} This shows that for every $p\in \mathbb{N}$ and $h_1,h_2\in U_{\delta}$, $|| (h_1g^{\pm 1}h_2)^p v|| \geq e^{\alpha p} | |v | |.$  
Thus, the open neighbourhood $gU_{\delta}$ of $g$ in $\mathsf{GL}_n(k)$ satisfies the conclusion of the lemma.\end{proof}

From now own, $\mathsf{F}_m$ is the free group on the finite set $\{a_1^{\pm 1},\ldots,a_m^{\pm 1}\}$ equipped with the induced word metric. For $g\in \mathsf{F}_m$, $|g|$ is the word length of $g$.

\begin{lemma}\label{perturb2} Let $(x,V)\in \mathcal{F}_{1,n-1}(k)$. Fix $m\in \mathbb{N}$, $\alpha>0$ and $0<\varepsilon<1$. There exists a representation $\psi:\mathsf{F}_m\rightarrow \mathsf{SL}_n(k)$ and an open neighborhood $\Omega\subset \textup{Hom}(\mathsf{F}_m,\mathsf{GL}_n(k))$ with the property that for every $g\in \mathsf{F}_m$, $g\neq 1$ and $\psi'\in \Omega$, $$\psi'(g)\big(\mathbb{P}(k^n)\smallsetminus \mathcal{N}_{\varepsilon}(\mathbb{P}(V))\big)\subset B_{\varepsilon}(x)$$ and for every $[v]\in \mathbb{P}(k^n)\smallsetminus \mathcal{N}_{\varepsilon}(\mathbb{P}(V))$,  $$\big| \big| \psi'(g)v \big| \big|\geq e^{\alpha |g|} | |v | |.$$ Moreover, if $n\geq 4$ and $k$ is either nonarchimedean or $\mathbb{R}$, we may choose $\psi$ and $\Omega$ such that any representation $\psi'\in \Omega$ is not $2q$-Anosov for any $1\leq q\leq \frac{n-1}{2}$. \end{lemma}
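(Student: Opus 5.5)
Fix $h\in\mathsf{GL}_n(k)$ with $h[e_1]=x$ and $h(\textup{span}\{e_2,\ldots,e_n\})=V$. I will build $\psi$ by ping-pong from $m$ biproximal matrices $g_i=h_i D_i h_i^{-1}$, $i=1,\ldots,m$, each of the form (\ref{proximal-g}). The attracting points $g_i^{\pm}$ will all lie very close to $x$. The repelling hyperplanes $V_{g_i}^{\pm}$ will all lie very close to $V$. At the same time the $2m$ points $g_i^{\pm}$ will be pairwise separated, and each $g_i^{\pm}$ will lie far from every hyperplane $V_{g_j}^{\mp}$ with $(j,\mp)\neq(i,\pm)$.

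Concretely, I take $h_i=hu_i$, where $u_i$ is close to $\textup{I}_n$. Each $u_i$ moves $[e_1]$ and $[e_n]$ slightly, in generic directions inside a small ball, so that $g_i^{+}=h u_i[e_1]$ and $g_i^{-}=hu_i[e_n]$. Here the roles are arranged so that both attracting points sit near $x$ and both repelling hyperplanes $V_{g_i}^{\pm}$ sit near $V$. This is done by replacing $e_n$ with a vector near $e_1$ after a linear change that keeps the frame non-degenerate. I expect this to be the most delicate bookkeeping step. If $g_i^{+}$ and $g_i^{-}$ are both near $x$, then $V_{g_i}^{+}$, which must avoid $g_i^{-}$, has to be transverse to $x$. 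Since $V$ is itself transverse to $x$, all the transversality requirements can be met simultaneously.

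Next I fix $\varepsilon'\ll\varepsilon$ smaller than all the mutual separations, so that:
\begin{itemize}
\item the balls $B_{\delta}(g_i^{\pm})$ lie in $\mathcal{M}_{\varepsilon'}^{\mp}(g_j)$ for every pair $(j,\mp)\neq(i,\pm)$;
\item these balls lie in $B_{\varepsilon}(x)\subset\mathbb{P}(k^n)\smallsetminus\mathcal{N}_{\varepsilon}(\mathbb{P}(V))$;
\item $\mathbb{P}(k^n)\smallsetminus\mathcal{N}_{\varepsilon}(\mathbb{P}(V))\subset\bigcap_j\mathcal{M}_{\varepsilon'}(g_j)$.
\end{itemize}
Then I choose $\kappa_1^{(i)},\kappa_n^{(i)}$ huge and $A_{g_i}$ fixed so that (\ref{condition1}) and (\ref{condition2}) hold with $\varepsilon'$, $\delta$ and $\alpha$ replaced by $\alpha+\log 2$. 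Lemma \ref{perturb1} then gives neighbourhoods $g_iU_\delta$. I set $\psi(a_i)=g_i$ and $\Omega=\{\psi':\psi'(a_i)\in g_iU_\delta\}$.

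For a reduced word $g=a_{i_1}^{p_1}\cdots a_{i_r}^{p_r}$ with consecutive syllables in different letters, I induct from the right. A point of $\mathbb{P}(k^n)\smallsetminus\mathcal{N}_\varepsilon(\mathbb{P}(V))$ lies in $\mathcal{M}_{\varepsilon'}(g_{i_r})$, so $\psi'(a_{i_r})^{p_r}$ sends it into $B_\delta(g_{i_r}^{\pm})$ and multiplies norms by at least $e^{\alpha|p_r|}$. That ball lies in $\mathcal{M}_{\varepsilon'}(g_{i_{r-1}})$ because $i_{r-1}\neq i_r$, so the argument repeats. Lemma \ref{perturb1} is stated for $[v]\in\mathcal{M}_\varepsilon(g)$, and the balls are only in $\mathcal{M}^{\mp}$; but inspecting its proof, the norm bound (\ref{estimate3}) only needs membership in the relevant $\mathcal{M}^{\mp}$ after applying the $h_2$ factor, so the same conclusion holds. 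Multiplying the estimates gives $\|\psi'(g)v\|\geq e^{\alpha\sum|p_j|}\|v\|=e^{\alpha|g|}\|v\|$, and the final image lies in $B_\delta(g_{i_1}^\pm)\subset B_\varepsilon(x)$.

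For the "moreover" part, with $n\geq4$ and $k$ nonarchimedean or $\mathbb{R}$, I choose $A_{g_1}$ so that $\psi'(a_1)$ has a pair of eigenvalues of equal modulus in positions $2q,2q+1$ for all nearby $\psi'$ and all $q$. Over $\mathbb{R}$, I put $2\times2$ rotation-scaling blocks $\lambda_j R_{\theta_j}$ on the diagonal of $A_{g_1}$, with distinct $\lambda_j$ and $\theta_j\notin\pi\mathbb{Z}$. Such a block has non-real eigenvalues, which persist under small perturbation, so the complex-conjugate pair keeps equal moduli. Over nonarchimedean $k$, I use the companion matrix of an irreducible quadratic $t^2-\pi_k$, with $\pi_k$ a uniformizer, scaled so that $|\cdot|$ is constant on its roots. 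Its roots are conjugate in a ramified extension and have equal absolute value, and this is stable by Krasner's lemma. The blocks must occupy the indices $2q,2q+1$ counted from the top, where $\kappa_1$ is index $1$. The parity forces the blocks to start at index $2$, and that is exactly the shape of $A_g\in\mathsf{GL}_{n-2}$ when it is filled with $2\times2$ blocks, with a leftover $1\times1$ block when $n$ is odd. Then $\ell_{2q}=\ell_{2q+1}$ for $\psi'(a_1)$, so $\sigma_{2q}/\sigma_{2q+1}(\psi'(a_1^N))$ grows subexponentially in $N$, and $\psi'$ is not $2q$-Anosov.
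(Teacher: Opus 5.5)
Your strategy is the paper's: Schottky generators from Lemma \ref{perturb1} with attracting points clustered at $x$ and repelling hyperplanes clustered at $V$, then a right-to-left ping-pong multiplying the norm bounds, then eigenvalue pairs in $A_{g_1}$. However, your setup rests on a misreading of the hypothesis. Since $(x,V)\in\mathcal{F}_{1,n-1}(k)$ is a flag, $x\in\mathbb{P}(V)$; for instance, in the proof of Theorem \ref{nonArchimedean}, $y_n\in\mathbb{P}(V_n)$. So no $h$ with $h[e_1]=x$ and $h(\textup{span}\{e_2,\ldots,e_n\})=V$ exists. Your second bullet is also false: in fact $B_{\varepsilon}(x)\subset\mathcal{N}_{\varepsilon}(\mathbb{P}(V))$.

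Your compatibility argument is backwards. Since $g_j^{+}\in\mathbb{P}(V_{g_j}^{+})$, your third bullet gives $g_j^{+}\in\mathcal{N}_{\varepsilon}(\mathbb{P}(V))$, which directly contradicts your second bullet. More generally, your requirements force $\textup{dist}(x,\mathbb{P}(V))\leq 2\varepsilon$, so they could not be met if $x$ were transverse to and far from $V$. They are satisfiable precisely because $x\in V$.

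The correct setup is the paper's. Choose $2m$ pairwise transverse flags $(x_s,V_s)$ near $(x,V)$ (generic small perturbations of a single flag). Choose $\epsilon\leq\varepsilon$ with $B_{\epsilon}(x_s)\subset B_{\varepsilon}(x)$, $\mathcal{N}_{\epsilon}(\mathbb{P}(V_s))\subset\mathcal{N}_{\varepsilon}(\mathbb{P}(V))$, and $B_{\epsilon}(x_r)\cap\mathcal{N}_{\epsilon}(\mathbb{P}(V_s))=\emptyset$ for $r\neq s$. Then give $g_i$ attracting flag $(x_{2i-1},V_{2i-1})$ and repelling flag $(x_{2i},V_{2i})$. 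Your induction never uses the false bullet, so with this correction the ping-pong goes through. You also need not reopen the proof of Lemma \ref{perturb1}: for $i_{r-1}\neq i_r$, your first bullet already places $B_{\delta}(g_{i_r}^{\pm})$ in all of $\mathcal{M}_{\varepsilon'}(g_{i_{r-1}})$.

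In the ``moreover'' part, your real blocks $\lambda_j\mathsf{R}_{\theta_j}$ with distinct $\lambda_j$ are simpler than the paper's transcendental $\xi$ and $\wedge^{2q}$ argument. Over nonarchimedean $k$, any two distinct entries of equal absolute value suffice; the paper uses $\kappa^i$ and $1+\kappa^i$. The reason is that a root within distance $<|\mu|$ of $\mu$ has absolute value $|\mu|$, so irreducibility and Krasner's lemma play no role.

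Your claim does fail at the top index for odd $n$. When $q=\frac{n-1}{2}$, the indices $2q,2q+1$ are $n-1,n$. These are occupied by your leftover $1\times1$ entry and $\kappa_n$, so $\ell_{n-1}>\ell_n$. No choice of $A_{g_1}$ repairs this, for three reasons:
\begin{enumerate}
\item For a group, $(n-1)$-Anosov is the same as $1$-Anosov.
\item Every $\psi'(a_i)$ is $1$-biproximal, since it and its inverse contract $B_{\delta}(g_i^{+})$, resp.\ $B_{\delta}(g_i^{-})$, into themselves.
\item The uniform contraction in the ping-pong in fact makes every $\psi'\in\Omega$ $1$-Anosov.
\end{enumerate}
The paper's proof has the same defect: for odd $n$ its $\psi(a_1)$ also satisfies $\ell_{n-1}>\ell_n$. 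What both arguments actually prove is the claim for $2q\leq n-2$. That is all the applications use, because index $n-1$ is excluded there by a non-$1$-proximal element of the other factor.
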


\begin{proof} Choose $0<\epsilon\leq \varepsilon$ and pairwise transverse flags $(x_1,V_1),\ldots, (x_{2m},V_{2m})\in \mathcal{F}_{1,n-1}(k)$ such that $B_{\epsilon}(x_{s})\subset B_{\varepsilon}(x)$, $\mathcal{N}_{\epsilon}(\mathbb{P}(V_{\ell}))\subset \mathcal{N}_{\varepsilon}(\mathbb{P}(V))$  for every $s=1,\ldots, m$ and $$B_{\epsilon}(x_r) \subset \mathbb{P}(k^n)\smallsetminus \mathcal{N}_{\epsilon}(\mathbb{P}(V_{s})), \ \ s\neq r.$$  By Lemma \ref{perturb1}, there are $g_1,\ldots,g_m\in \mathsf{SL}_n(k)$ $1$-biproximal matrices with $(g_i^{+},V_{g_i}^{+})=(x_{2i-1},V_{2i-1})$, $(g_i^{-},V_{g_i}^{-})=(x_{2i},V_{2i})$ for every $i$ and $B_{\epsilon}(g_i^{\pm})\subset \mathcal{M}_{\epsilon}(g_j)$ for $i\neq j$, and open neighbourhoods $\Omega_{i}\subset \mathsf{GL}_d(k)$ of $g_i$  such that for every $g_i'\in \Omega_i$    and $p\neq 0$,  \begin{align}\label{estimate0} (g_i')^p \mathcal{M}_{\epsilon}(g_i)&\subset B_{\epsilon}(g_i^{+})\cup B_{\epsilon}(g_i^{-}),\\ \label{estimate0'}  \forall  [v]\in \mathcal{M}_{\epsilon}(g_i),& \ | |(g_i')^pv| |\geq e^{\alpha|p|}|| v||.\end{align}

The unique representation $\psi:\mathsf{F}_m\rightarrow \mathsf{SL}_n(k)$ with $\psi(a_i)=g_i$ for every $i$, and its open neighborhood $\Omega:= \Omega_{1}\times \cdots \times \Omega_{m}$ satisfy the conclusion of the lemma.
To see this, fix a reduced word  $a_{i_r}^{s_r}\cdots a_{i_1}^{s_1}\in \mathsf{F}_m$, where $r\geq 2$ and $s_j\neq 0$, and a representation $\psi'\in \Omega$ with $\psi'(a_i)=g_i'$. For any $[v]\in \mathbb{P}(k^n)\smallsetminus \mathcal{N}_{\varepsilon}(\mathbb{P}(V))$, as $[v]\in \mathcal{M}_{\epsilon}(g_{i_r})$ and $B_{\epsilon}(g_{i_r}^{\pm})\subset \mathcal{M}_{\epsilon}(g_{i_s})$ when $r\neq s$, we inductively check that for every $p=1,\ldots, r-1$, \begin{align*} \big[(g_{i_{p}}')^{s_{p}}\cdots (g_{i_1}')^{s_1}v\big]&\in B_{\epsilon}(g_{i_{p}}^{\pm})\subset \mathcal{M}_{\varepsilon}(g_{i_{p+1}}),\\ \big|\big|(g_{i_{p}}')^{s_p}\cdots (g_{i_1}')^{s_1}v\big|\big|&\geq e^{\alpha \sum |s_i|}||v||.\end{align*} Therefore, $[\psi'(a_{i_r}^{s_r}\cdots a_{i_1}^{s_1})v]\in B_{\epsilon}(g_{i_{r}}^{\pm})\subset B_{\delta}(x)$ and $\big|\big |\psi'(a_{i_r}^{s_r}\cdots a_{i_1}^{s_1})v\big|\big|\geq e^{\alpha\sum |s_i|} | |v | |$.
\medskip

\par For the rest of the proof  we assume that $n\geq 4$ and  choose $w\in \mathsf{GL}_n(k)$ such that $(x_1,V_1)=\big([we_1],we_n^{\perp}\big)$ and $(x_2,V_2)=\big([we_n], w e_1^{\perp}\big)$ are the attracting and repelling $\{1,n-1\}$ flags of $\psi(a_1)$. There are two cases to consider.
\medskip

\noindent {\em Case 1. k is nonarchimedean.} In the definition of $\psi$, we can choose $\kappa \in k$ with $|\kappa|>1$, $$\psi(a_1)=\left\{\begin{matrix}[1.5] w\textup{diag}\big(\kappa^{d}, \kappa^{d-1},1+\kappa^{d-1}, \ldots, \kappa,1+ \kappa, \frac{1}{\kappa^{d}} \prod_{i}\frac{1}{\kappa^{i}+\kappa^{2i}}\big)w^{-1}, \ n=2d \ \ \  \ \ \  \ \ \ \ \\
w\textup{diag}\big(\kappa^{d}, \kappa^{d-1},1+ \kappa^{d-1}, \ldots, \kappa,1+ \kappa, 1, \frac{1}{\kappa^{d}} \prod_{i}\frac{1}{\kappa^{i}+\kappa^{2i}}\big)w^{-1}, \  n=2d+1 \ \
\end{matrix}\right.$$ By Lemma \ref{perturb1}, choose $|\kappa|>1$ sufficiently large, depending only on the choice of $(x_1,V_1),\ldots,\\ (x_{2m},V_{2m})$, such that (\ref{estimate0}) and (\ref{estimate0'}) holds for any matrix in an open neighborhood $\Omega_1$ of $\psi(a_1)$ in $\mathsf{GL}_n(k)$. Clearly, $\psi(a_1)$ is $1$-biproximal and all of its diagonal entries are distinct. By Lemma \ref{perturb}, we may shrink, if necessary, the open neighbourhood $\Omega_{1}$ such that every matrix $g\in \Omega_{1}$ is conjugate to a diagonal matrix of the form \begin{align*} \textup{diag}&\big( \kappa_1, \ldots,  \kappa_{2d-1}, \ldots, \kappa_n \big)\\  |\kappa_{1}-\kappa^{d}|<\varepsilon, \ldots, &|\kappa-\kappa_{2d-2}|<\varepsilon,  |\kappa+1-\kappa_{2d-1}|<\varepsilon \end{align*} where $|\kappa_{i}|\leq 2$ when $i>2d-1$. Since $|\kappa|>1$, by (\ref{ultrametric}),  $|\kappa_2|=|\kappa_3|=|\kappa|^{d-1}, \ldots, |\kappa_{2d-2}|=|\kappa_{2d-1}|=|\kappa|$. Thus any matrix in $ \Omega_{1}$ cannot be $2q$-proximal for any $1\leq q\leq \frac{n-1}{2}$, hence any representation $\psi'\in \Omega$ is not $2q$-Anosov.

\noindent  {\em Case 2. $k=\mathbb{R}$.} Fix $\xi>0$ transcedental over $\mathbb{Q}(\pi)$  and let $\mathsf{R}_{\xi^m}:=\begin{pmatrix} \cos\xi^m & -\sin \xi^m \\ \sin\xi^m & \cos\xi^m \end{pmatrix},$ $m\in \mathbb{N}$. Let also $$\psi(a_1):=\left\{\begin{matrix}[1.3]
w \textup{diag}\left(\tau, \mathsf{R}_{\xi},\ldots, \mathsf{R}_{\xi^d}, \tau^{-1} \right)w^{-1} \ \ \  \ \ \ \ \ \ \ & n=2d+2  \\
w\textup{diag}\left(\tau, 2\mathsf{R}_{\xi},\ldots, 2\mathsf{R}_{\xi^d},2^{-d}, \tau^{-1}\right)w^{-1} & n=2d+3 \\
\end{matrix}\right.$$ and $\tau>4^n$ be large enough such that (\ref{estimate0}) and (\ref{estimate0'}) hold for a sufficiently small neighborhood $\Omega_1$ of $\psi(a_1)$. By the choice of $\xi$, every eigenvalue of maximum modulus of $\wedge^{2q}\psi(a_1)$ is not real and the same holds for sufficiently small perturbations of $\psi(a_1)$. Thus, by shrinking, if necessary, the open set $\Omega_1$, for every $\psi'\in \Omega$ and $1\leq q\leq \frac{n-1}{2}$, $\wedge^{2q}\psi'(a_1)\in \mathsf{SL}(\wedge^{2q}\mathbb{R}^n)$ cannot be 1-proximal and hence $\psi'$ is not $2q$-Anosov.\end{proof}

%We shall also need the following lemma which provides conditions for the group generated by two subgroups of $\mathsf{GL}_n(k)$ in ping-pong position to be quasi-isometrically embedded. We also refer to \cite{.} and \cite{.} for closely related statements. 

Ping-pong constructions of  free groups have been exhibited in various settings; see for example  \cite{Tits, Ben96, Quint, BG-TA, CPL24}. For the construction of the examples in Theorems \ref{nonArchimedean} and \ref{Archimedean} we use the following elementary ping-pong lemma, variations of which have also been used in previous constructions (see \cite[Lem. 4.2]{CPL24} and \cite[Prop. 6]{DKT25}).

\begin{lemma}\label{pingpong-lem} Let $\Gamma_1,\Gamma_2$ be  finitely generated semigroups and fix \hbox{$|\cdot|_{i}:\Gamma_i\rightarrow \mathbb{R}_{+}$} word metrics on $\Gamma_i$, $i=1,2$. Let $\{\rho_{s'}:\Gamma_1\rightarrow \mathsf{GL}_n(k)\}_{s'\in I}$ and $\{\phi_s:\Gamma_2\rightarrow \mathsf{GL}_n(k)\}_{s\in J}$ be non-empty families of representations such that there exist non-empty subsets $\mathcal{C}_1, \mathcal{C}_2\subset \mathbb{P}(k^d)$ and $c,\theta>0$, with the following properties:\begin{enumerate}\item For every $s'\in I$, $\gamma_1 \in \Gamma_1\smallsetminus \{1\}$ and $[v_1]\in \mathcal{C}_1$, $[\rho_t(\gamma)v_1]\in \mathcal{C}_2$ and $$\big|\big|\rho_{s'}(\gamma)v_1\big|\big|\geq e^{\theta |\gamma|_{1}-c} | | v_1| |.$$
\item For every $s\in J$, $\gamma_2 \in \Gamma_2\smallsetminus \{1\}$ and $[v_2]\in \mathcal{C}_2$, $[\phi_s(\gamma)v_2]\in \mathcal{C}_1$ and $$\big|\big|\phi_s(\gamma)v_2\big|\big|\geq e^{\theta |\gamma|_{2}+c} | | v_2| |.$$ \end{enumerate}

\noindent Then for every $(s',s)\in I\times J$, the unique representation of $\Gamma_1\ast \Gamma_2$ into $\mathsf{GL}_n(k)$, restricting to $\rho_{s'}$ on $\Gamma_1$ and to $\phi_s$ on $\Gamma_2$, is a quasi-isometric embedding. \qed \end{lemma}

%\begin{proof} The proof follows by the sub-multiplicativity of the operator norm. If $\gamma_1\cdots \gamma_n\in \langle \Gamma_1,\Gamma_2\rangle$ is a reduced word, $\gamma_i\in  \Gamma_1\cup \Gamma_2$, $n\geq 1$, then by conditions (i) and (ii), for every $v\in \mathcal{C}_j$, where $j\in \{1,2\}$ is the index such that $\gamma_n\in \mathcal{C}_{3-j}$, we have $||\gamma_1\cdots \gamma_n v||\geq c^{\pm 1} e^{\alpha \sum_{j}|\gamma_i|_{\Gamma}}$.\end{proof}

\section{Proof of Theorem \ref{nonArchimedean}}\label{nonArch}
In this section, $k$ is a nonarchimedean local field. 

\begin{proof}[Proof of Theorem \ref{nonArchimedean}]  
\par Let $A_n\in \mathsf{SL}_{n-3}(k)$ be the empty block if $n=3$, otherwise define $$A_{n}:=\left\{\begin{matrix}[1.5]
\textup{I}_{n-3} &n=4,5  \\
\textup{diag}\Big(\nu_1, 1+\nu_1,\ldots,\nu_s,1+ \nu_s, \prod_i (\nu_i+\nu_i^2)^{-1}\Big)\ \ \ \  & n=2s+4  \\
\textup{diag}\Big(\nu_1,1+\nu_1,\ldots,\nu_s,1+ \nu_s,  \prod_i (\nu_i+\nu_i^2)^{-1},1\Big) & n=2s+5  \\
\end{matrix}\right.$$ where $\nu_1,\ldots,\nu_s\in k$, $|\nu_1|>\cdots>|\nu_s|>2$ are fixed and distinct. Fix also $\nu\in k$ with \hbox{$|\nu|\geq 10n\big(1+||A_n^{\pm 1}||\big)$} and consider the matrix $$C:=\begin{pmatrix}
\nu &  &  &  \\
 &   1+ \nu &  \\
 &  &  \frac{1}{\nu+\nu^2}& \\
 &  &  &  A_n\\
\end{pmatrix}.$$ Note that $C$ has all of its eigenvalues distinct. Fix $0<\epsilon\leq \min \big\{\frac{1}{2}\underset{i\neq j}{\min}|C_{ii}-C_{jj}|, |\nu|^{-3}\big\}$. By Lemma \ref{perturb}, choose the open set $$\Omega_{\epsilon}(C):=\big\{C_{\epsilon}\in \mathsf{GL}_n(k):||C-C_{\epsilon}||<\theta(n,\epsilon, C)\big\}$$ where $\theta(n,\epsilon, C)>0$ is defined in (\ref{constant-0}), \hbox{such that any matrix $C_{\epsilon}\in \Omega_{\epsilon}(C)$ is of the form} \begin{align}\label{Bdef}C_{\epsilon}&:=h\begin{pmatrix}\lambda_1 &  &  &  \\
 &    \lambda_2 &  \\
 &  &   \lambda_3& \\
 &  &  &  A_n'
\end{pmatrix}h^{-1}\\ \nonumber ||h^{\pm 1}-\textup{I}_{n}||< \epsilon,\ |\lambda_1-&\nu|<\epsilon,\  |\lambda_2-(1+\nu)|<\epsilon,\ |\lambda_3-(\nu+\nu^2)^{-1}|<\epsilon,\end{align} and $(A_{n}')^{\pm 1}\in \mathsf{GL}_{n-3}(k)$ is a diagonal matrix with \begin{align}\label{diag-ent}\max_{1\leq j\leq n-3} \big|(A_n)_{jj}-(A_n')_{jj}\big|<\epsilon.\end{align}  Since $|\nu|>1$, by (\ref{ultrametric}), $|\lambda_1|=|\lambda_2|=|\nu|$ and $C_{\epsilon}\in \Omega_{\epsilon}(C)$ fails to be a $1$-proximal. If $n\geq 6$, by using the fact that $|x_j|>1$ for every $j$, (\ref{diag-ent}) and (\ref{ultrametric}), the first $2s$ diagonal entries of $A_n'$ have moduli $|\nu_1|,|\nu_1|,\ldots,|\nu_s|,|\nu_s|\geq 2$. Thus, the moduli of the first $2s+2$ eigenvalues of the matrix $C_{\epsilon}\in \Omega_{\epsilon}(C)$ are $|\nu|,|\nu|, |\nu_1|,|\nu_1|, \ldots, |\nu_s|,|\nu_s|.$ In particular, any matrix in $ \Omega_{\epsilon}(C)$ fails to be $(2q+1)$-biproximal for any $0\leq q\leq \frac{n-2}{2}$.

Now consider the flag $(y_n,V_n)\in \mathcal{F}_{1,n-1}(k)$,  \begin{align*}\big(y_n,V_n\big):=\big([ e_1+\nu e_2-(1+\nu)e_3], (e_1+e_2+e_3)^{\perp}\big).\end{align*} \par By Lemma \ref{perturb2}, there is a representation $\psi_{m-1}:\mathsf{F}_{m-1}\rightarrow\mathsf{SL}_{n}(k)$ and an open neighborhood $\Omega_{0}\subset \textup{Hom}(\mathsf{F}_{m-1},\mathsf{GL}_n(k))$ of $\psi_{m-1}$, with the property that for every $\psi_{m-1}'\in \Omega_{0}$, \begin{align}\label{pingpong-1}  \forall  w\in \mathsf{F}_{m-1}\smallsetminus \{1\}, \ & \psi_{m-1}'(w)\big(\mathbb{P}(k^n)\smallsetminus \mathcal{N}_{\epsilon}(\mathbb{P}(V_n))\big)\subset B_{\epsilon}(y_n)\\  
\label{pingpong-2} \forall    [v]\in \mathbb{P}(k^n)\smallsetminus \mathcal{N}_{\epsilon}&(\mathbb{P}(V_n)),  \   || \psi'_{m-1}(w)v||\geq 2^{|w|} ||v||\end{align} In addition, for $n\geq 4$, $\Omega_{0}$ can be chosen such that for any $\psi'_{m-1}\in \Omega_{0}$, $\psi'_{m-1}(\mathsf{F}_{m-1})$ contains a matrix which is not $2d$-proximal for any $1\leq d \leq \frac{n-1}{2}$. Consider the unique representation $\rho: \mathsf{F}_m\rightarrow \mathsf{SL}_n(k)$, $\mathsf{F}_{m}= \langle a_1\rangle \ast \mathsf{F}_{m-1}$, satisfying \begin{align}\label{rhodef-1}\rho(a_1)&=\textup{diag}\big(\nu, 1+\nu,(\nu+\nu^2)^{-1},A_n\big)\\ \label{rhodef-2}\rho(a_i)&=\psi_{m-1}(a_i), \ i=2,\ldots,m.\end{align} 

\begin{claim}\label{claim1} For  every $p\neq 0$ and $C_{\epsilon}\in \Omega_{\epsilon}(C)$ as in (\ref{Bdef}), $$C_{\epsilon}^pB_{\epsilon}(y_n)\subset \mathbb{P}(k^n)\smallsetminus \mathcal{N}_{\frac{1}{6}}(\mathbb{P}(V_n)).$$ In addition, for every $[u]\in B_{\epsilon}(x)$ we have $||C_{\epsilon}^pu||\geq 2^{|p|} ||u ||.$\end{claim}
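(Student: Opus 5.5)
The plan is a direct computation in the eigenbasis of $C_\epsilon$, done with the ultrametric inequality (\ref{ultrametric}). Here $B_\epsilon(x)$ in the statement should be read as $B_\epsilon(y_n)$. Write $C_\epsilon = hDh^{-1}$ as in (\ref{Bdef}), with $D=\textup{diag}(\lambda_1,\lambda_2,\lambda_3,A_n')$. Since $||h^{\pm1}-\textup{I}_n||<\epsilon\le|\nu|^{-3}<1$, the ultrametric property makes $h$ an isometry of $(k^n,||\cdot||)$, and $h$ moves every point of $\mathbb{P}(k^n)$ by at most $\epsilon$. So we may replace $B_\epsilon(y_n)$ by $B_{2\epsilon}(y_n)$ and the target set by $\mathbb{P}(k^n)\smallsetminus\mathcal{N}_{1/6-2\epsilon}(\mathbb{P}(V_n))$. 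This reduces the claim to the same statement for the diagonal matrix $D$, at the cost of harmless constants.

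Next, take a unit representative $u=y+z$ with $y=e_1+\nu e_2-(1+\nu)e_3$ normalized and $||z||\le 2\epsilon$. In the $\ell_\infty$ norm we have $||y||=|\nu|$, so after normalizing $u=\nu^{-1}e_1+e_2-(1+\nu)\nu^{-1}e_3+z$. By ultrametricity the coordinates satisfy $|u_2|=1$ and $|u_3|=1$ (as $|1+\nu|=|\nu|$), $|u_1|\le|\nu|^{-1}$, and $|u_j|\le 2\epsilon$ for $j\ge 4$. Applying $D^p$ for $p>0$:
\begin{itemize}
\item the second coordinate becomes $\lambda_2^p u_2$, of modulus $|\nu|^p$;
\item the first has modulus at most $|\nu|^{p-1}$;
\item the third has modulus $|\nu|^{-2p}$;
\item the remaining coordinates have modulus at most $\max_j|\nu_j|^{p}\,2\epsilon$, which is $<|\nu|^p$ by the choice $|\nu|\ge 10n(1+||A_n^{\pm1}||)$.
\end{itemize}
Hence $||D^pu||=|\nu|^p\ge 2^p$, and the normalized vector $D^pu/||D^pu||$ has second coordinate of modulus $1$ and all other coordinates strictly smaller.

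For $p<0$ the argument is symmetric: the third coordinate dominates with modulus $|\nu+\nu^2|^{|p|}=|\nu|^{2|p|}$. The remaining coordinates are at most $|\nu|^{|p|}$ or $||(A_n')^{-1}||^{|p|}$ times bounded quantities, hence strictly smaller, and the norm bound again holds.

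It remains to bound the distance to $\mathbb{P}(V_n)$ using (\ref{distance}) with $v=e_1+e_2+e_3$, $||v||=1$. For the normalized image $w$ we have $w\cdot v=w_1+w_2+w_3$, which by ultrametricity has modulus exactly $1$, since exactly one coordinate has modulus $1$ and the others are strictly smaller. Thus $\textup{dist}([w],\mathbb{P}(V_n))\ge 1$, well above $1/6$ plus the $2\epsilon$ error. This is precisely why $y_n$ was chosen with $y_n\cdot v=0$ in the first three coordinates but non-degenerate under powers.

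The main obstacle is checking that the tail coordinates, those from $A_n'$ and from the perturbation $z$, never tie with the dominant one. In the ultrametric setting a tie of moduli could cancel $w\cdot v$. This is handled by the strict inequalities $|\nu|>||A_n^{\pm1}||$ together with $\epsilon\le|\nu|^{-3}$, combined with (\ref{diag-ent}).
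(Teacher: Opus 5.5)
Your proof is correct and follows essentially the same route as the paper. Both arguments write $C_\epsilon=h\Delta h^{-1}$ with $h$ close to $\textup{I}_n$, observe that the $e_2$-coordinate (for $p>0$) or the $e_3$-coordinate (for $p<0$) of $\Delta^p$ applied to a vector near $e_1+\nu e_2-(1+\nu)e_3$ strictly dominates, and bound $|C_\epsilon^p u\cdot(e_1+e_2+e_3)|$ from below via (\ref{distance}). The only difference is that you use that $h^{\pm1}$ is a norm isometry, which gives the sharper conclusions that the distance to $\mathbb{P}(V_n)$ is essentially $1$ and $||C_\epsilon^p u||=|\nu|^{|p|}||u||$ (resp. $|\nu|^{2|p|}||u||$); the paper instead carries an additive error $6\epsilon||\Delta^p||\cdot|\nu|$.

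One small slip: in your reduction, the neighborhood excluded for $D^p$ must be enlarged (e.g. to $\mathcal{N}_{1/6+\epsilon}(\mathbb{P}(V_n))$), not shrunk to $\mathcal{N}_{1/6-2\epsilon}$. This is harmless here, since you obtain distance at least $1$ before applying $h$.
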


\begin{proof} Fix $[u]\in B_{\varepsilon}(y_n)$. Up to multiplying $u\in k^n$ with a scalar, we may assume that $$\big|\big|u-(e_1+\nu e_2-(1+\nu)e_3)\big|\big|\leq \epsilon |\nu|<|\nu|^{-2}.$$ By (\ref{ultrametric}) we have $||u||=|\nu|$. Given $C_{\epsilon}\in \Omega_{\epsilon}(C)$, write $C_{\epsilon}=h\Delta h^{-1}$ as in (\ref{Bdef}), where $\Delta:=\textup{diag}(\lambda_1,\lambda_2,\lambda_3,A_n')$ and \hbox{$||h^{\pm 1}-\textup{I}_n||< \epsilon$, $|\lambda_1|=|\lambda_2|=|\nu|$, $|\lambda_3-\nu^{-2}|<\epsilon$,\hbox{$\big|\frac{\lambda_3}{\lambda_1}\big|\leq 2|\nu|^{-3}$.} Thus,} \begin{align} \label{diagbound} \big|\big|C_{\epsilon}^p u-\Delta^p( e_1+\nu e_2-(1+\nu)e_3)\big|\big|&\leq 6\epsilon ||\Delta ^p|| \cdot | \nu |. \end{align}

%and write $u=(c_1,c_2,c_3,\omega)$ \hbox{for some $c_1,c_2,c_3\in k, \omega \in k^{n-3}$ with} $$\max\Big\{ |c_1-1|, |c_2-\nu|, |c_3+(1+\nu)|, | |\omega| |\Big\}<\epsilon|\nu|<|\nu|^{-2}.$$ By (\ref{ultrametric}) we have $|c_1|=1$, $|c_2|=|\nu|$, $|c_3|=|1+\nu|=|\nu|$ and $||u||=|\nu|$.

% \nonumber \big|\big|h\Delta^ph^{-1}u-\Delta^ph^{-1}u\big|\big|&\leq \big|\big|h-\textup{I}_d\big|\big|\cdot \big|\big|\Delta^p\big|\big|\cdot \big|\big|u\big|\big|\cdot \big|\big| h^{-1}\big|\big|\\ \nonumber &\leq 2\epsilon \big|\big|\Delta ^p\big|\big|\cdot  \big|\big|u\big|\big|,\\ \nonumber \big|\big|\Delta^ph^{-1}u-\Delta^pu\big|\big|&\leq \big|\big|h^{-1}-\textup{I}_d\big|\big|\cdot \big|\big|\Delta^p\big|\big|\cdot \big|\big|u\big|\big|\\ \nonumber &\leq \epsilon  \big|\big|\Delta ^p\big|\big|  \cdot \big|\big|u\big|\big|,\\ 

\noindent {\em Case 1: $p>0$.} Then $||\Delta^p||=\max\{|\lambda_1|^p, |\lambda_3|^p, ||(A_n')^p||\}=|\nu|^p$, since $||(A_n')^p||\leq |\nu|^p$.  In particular, $||C_{\epsilon}^p||\leq ||h||\cdot ||h^{-1}||\cdot |\lambda_1|^p\leq 4|\lambda_1|^{p}$. By using (\ref{diagbound}), we obtain the bounds \begin{align*} \frac{| C_{\epsilon}^pu \cdot (e_1+e_2+e_3) |}{||u||\cdot ||C_{\epsilon}||^p}&\geq \frac{\big|\lambda_1^p+ \lambda_2^p \nu-\lambda_3^p (1+\nu)\big|}{4|\nu| \cdot|\lambda_1|^p} -6\epsilon\\ &\geq \frac{1}{4|\nu|}\big( |\nu|-1-2|\nu|^{-2}\big)-6\epsilon> \frac{1}{6},\end{align*} hence  by (\ref{distance}) we have $[C_{\epsilon}^pu]\in \mathbb{P}(k^n)\smallsetminus \mathcal{N}_{\frac{1}{6}}(\mathbb{P}(V_n))$. 

%&=\frac{1}{|\nu|}\Bigg|c_1+ \frac{\lambda_2^p}{\lambda_1^p}c_2+\frac{\lambda_3^p}{\lambda_1^p}c_3+\frac{1}{\lambda_1^{p}}\sum_{j}(A_{n}')_{jj}^p\omega_j  \Bigg|-3\epsilon\\
\medskip

\noindent {\em Case 2: $p<0$.}  Then $||\Delta^{p}||=\max\big \{|\lambda_1|^{p}, |\lambda_3|^{p}, ||(A_n')^{p}||\big \}=|\lambda_3|^{p}$, since $|\lambda_3|\leq 2|\nu|^{-2}$. In particular, $||C_{\epsilon}^p||\leq 4|\lambda_3|^{p}$ and $\big|\frac{\lambda_3}{\lambda_i}\big|\leq 2|\nu|^{-3}$ for $i=1,2$. By aguing as in the first case and using (\ref{diagbound}), we obtain the bounds \begin{align*} \frac{\big| C_{\epsilon}^p u \cdot (e_1+e_2+e_3)\big|}{||u||\cdot ||C_{\epsilon}^p||}&\geq \frac{ \big|\lambda_1^p+ \lambda_2^p \nu-(1+\nu)\lambda_3^p \big|}{4|\nu|\cdot |\lambda_3^p|}-6\epsilon\geq \frac{1}{4}-\frac{2^{|p|-1}}{|\nu|^{3|p|}}-6\epsilon> \frac{1}{6}.
\end{align*} and hence, by (\ref{distance}), we have that $[C_{\epsilon}^pu]\in \mathbb{P}(k^n)\smallsetminus \mathcal{N}_{\frac{1}{6}}(\mathbb{P}(V_n))$. 

In addition, by (\ref{diagbound}), for every $p\in \mathbb{Z}^{\ast}$, $$\frac{||C_{\epsilon}^pu||}{||u||}\geq (1-6\epsilon)||\Delta^p||\geq (1-6\epsilon)|\nu|^{|p|}\geq \sqrt[]{|\nu|}^{|p|}.\qedhere$$ Thus the claim follows.\end{proof} Now we complete the proof of the theorem. Clearly, $$\Omega_{\epsilon}(C)\times \Omega_{0}=\Big\{\rho'\in \mathsf{GL}_n(k)\times \textup{Hom}(\mathsf{F}_{m-1},\mathsf{GL}_n(k)):\rho'(a_0)\in \Omega_{\epsilon}(C),\rho'|_{\mathsf{F}_{m-1}}\in \Omega_{0}\Big\}$$ is an open neighbourhood of the representation $\rho:\mathsf{F}_{m}\rightarrow \mathsf{GL}_n(k)$ defined by (\ref{rhodef-1}) and (\ref{rhodef-2}). By Claim \ref{claim1}, (\ref{pingpong-1}) and (\ref{pingpong-2}), the sets $\mathcal{C}_1:=B_{\epsilon}(y_n)$, $\mathcal{C}_2:=\mathbb{P}(k^n)\smallsetminus \mathcal{N}_{\epsilon}(\mathbb{P}(V_n))$ and the representations $\Omega_{\epsilon}(C)\subset \textup{Hom}(\mathbb{Z},\mathsf{GL}_n(k))$, $\Omega_{0}\subset \textup{Hom}(\mathsf{F}_{m-1},\mathsf{GL}_n(k))$ satisfy the assumptions of Lemma \ref{pingpong-lem}. Thus, any representation $\rho'\in \Omega_{\epsilon}(C)\times \Omega_{0}$ is a quasi-isometric embedding of $\mathsf{F}_m$. Moreover, $\rho'$ is not Anosov since $\rho'(a_1)\in \Omega_{\epsilon}(C)$ is not $(2q+1)$-proximal for any $q$, while $\rho'(\mathsf{F}_{m-1})$ contains a non-trivial element which is not $2d$-proximal for any $d$. This completes the proof of the theorem. \end{proof} 

\section{Proof of Theorem \ref{Archimedean}}\label{Arch}

%\begin{theorem}\label{Archimedean-det} Let $F$ be a finite group and $\psi_0:F\rightarrow \mathsf{GL}_d(\mathbb{R})$ be a faithful representation, of even dimension \hbox{$d$}, acting irreducibly on $\mathbb{C}^d$. For $m\geq 1$, consider the virtually free group $$\Gamma_{F}^m:= \big(\mathbb{Z}\times F\big)\ast \mathsf{F}_{m}.$$ For any $n\geq d+1$, there exists a representation $\psi_n:\Gamma\rightarrow \mathsf{GL}_d(\mathbb{R})$ and an open neighborhood $\mathcal{O}_n$ of $\psi_n$ in $\textup{Hom}(\Gamma_{F}^m ,\mathsf{GL}_n(\mathbb{C}))$ with the following properties:\begin{enumerate}
% \item \label{Arch-det-1} Every representation in $\mathcal{O}_{n}$ is a quasi-isometric embedding and non-locally rigid;
%\item \label{Arch-det-2} $\mathcal{O}_{n} \cap \textup{Anosov}_j\left(\Gamma,\mathsf{GL}_n(\mathbb{C})\right)$ is empty for any $j=1,\ldots,d-1$; and 
%\item \label{Arch-det-3} $\mathcal{O}_{n} \cap \textup{Anosov}_r\left(\Gamma,\mathsf{GL}_n(\mathbb{R})\right)$ is empty for any $r=1,\ldots,n-1$.\end{enumerate}\end{theorem}

\begin{proof}[Proof of Theorem \ref{Archimedean}] Given $d\geq 2$, fix $F$ a finite group and $\psi_0:F\rightarrow \mathsf{O}(d)$ a faithful representation such that $\psi_0(F)$ acts irreducibly on $\mathbb{C}^d$. Choose unit vectors $v_1,v_2\in \mathbb{R}^d$ and $\eta>0$ such that \begin{align}\label{v1-ineq}\min_{f\in F}\big|\psi_0(f)v_1\cdot v_2\big|\geq \eta, \ \min_{f\in F\smallsetminus \{1\}}\big|\big(\psi_0(f)v_1-v_1\big)\cdot v_2\big|\geq \eta.\end{align} Let $t\in \mathbb{Z}\times F$ be the generator of $\mathbb{Z}$. For $n\geq d+1$, define the representation \hbox{$\psi_{n}:\mathbb{Z}\times F \rightarrow \mathsf{GL}_n(\mathbb{R})$} for $f\in F$ and $t\in \mathbb{Z}$ as follows: \begin{align}\label{rho1-def}\psi_{n}(f)=\begin{pmatrix}
\psi_0(f) &  &   \\
&   1  &  \\
 &  &   \textup{I}_{n-d-1} \\
\end{pmatrix}  \ \ \psi_{n}(t)=\begin{pmatrix}
\lambda \textup{I}_d & &   \\
 & \lambda^{-d} &    \\
 &  &   \Theta_n \\
\end{pmatrix} \end{align} where $\lambda\geq \frac{8^n}{\eta^2}$ is fixed. Here $\Theta_n$ is the empty block if $n=d+1$, otherwise \begin{align}\label{Theta-n}\Theta_{n}:=\left\{\begin{matrix}[1.2]
1   &n=d+2 \\
\textup{diag}\big(\mathsf{R}_{\xi},\ldots, \mathsf{R}_{\xi^s}   \big)  &n=2s+d+1  \\
\textup{diag}\big(2 \mathsf{R}_{\xi},\ldots, 2 \mathsf{R}_{\xi^s},4^{-s}  \big)   &n=2s+d+2 \\
\end{matrix}\right.\end{align} where $\xi>0$ is  transcedental over $\mathbb{Q}(\pi)$ and $\mathsf{R}_{\xi^m}\in \mathsf{SO}(2)$ is the rotation with respect to $\xi^m$.

\begin{lemma}\label{claim3} For every $0<\theta<1$, there exists $\theta'>0$, depending on $\theta$, such that $$\Psi_{\theta'}:=\left\{\phi\in \textup{Hom}\big(\mathbb{Z}\times F,\mathsf{GL}_n(\mathbb{C})\big):  \begin{matrix}[1.2] \forall  f\in F, \  \phi(f)=w\psi_{n}(f)w^{-1}\\
 ||w-\textup{I}_n||<\theta',  ||\phi(t)-\psi_{n}(t)||<\theta' \end{matrix}  \right\}$$ is an open neighborhood of $\psi_{n}$ and for any $\phi \in \Psi_{\theta'}$, there is $w_{\phi}\in \mathsf{GL}_n(\mathbb{C})$ with $| |w_{\phi}^{\pm 1}-\textup{I}_n| |<\theta$, \begin{align*}\phi(t)=w_\phi\begin{pmatrix}
\lambda_1 \textup{I}_d & &   \\
&   \lambda_{2} &  \\
 &  &   \Theta_n' \\
\end{pmatrix}w_{\phi}^{-1}, \ \phi(f)=w_{\phi}\psi_n(f)w_{\phi}^{-1} \ \forall f \in F,\end{align*} where $|\lambda_1-\lambda|<\theta$, $|\lambda_2-\lambda^{-d}|<\theta$ and $| |\Theta_n'-\Theta_n | |<\theta$. \hbox{If $n=d+1$ we can take $\theta'=\frac{\theta}{10^4 ||\psi_n(t)||}$.} \end{lemma}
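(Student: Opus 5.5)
The plan is to split the statement into two independent parts. The first is openness of $\Psi_{\theta'}$, which uses local rigidity of representations of the finite group $F$. The second is the normal form for $\phi(t)$, which uses Schur's lemma applied to the commuting pair $\phi(t)$, $\phi(F)$.

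\emph{Openness.} Let $\phi$ be close to $\psi_n$ in $\textup{Hom}(\mathbb{Z}\times F,\mathsf{GL}_n(\mathbb{C}))$. I would form the averaged intertwiner
\begin{align*}
w:=\frac{1}{|F|}\sum_{f\in F}\phi(f)\psi_n(f)^{-1}.
\end{align*}
It satisfies $\phi(f')w=w\psi_n(f')$ for all $f'\in F$. Moreover $||w-\textup{I}_n||\leq \max_{f}||\phi(f)-\psi_n(f)||\cdot||\psi_n(f)^{-1}||$, and this is small because $\psi_n(F)\subset \mathsf{O}(n)$. Hence $w$ is invertible and $\phi(f)=w\psi_n(f)w^{-1}$. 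It follows that every representation in a small enough neighborhood of $\psi_n$ lies in $\Psi_{\theta'}$. Since $F$ is finite, $\textup{Hom}(\mathbb{Z}\times F,\cdot)$ is cut out by finitely many closed conditions, so this shows that $\Psi_{\theta'}$ contains a neighborhood of $\psi_n$. The same argument applied at each point of $\Psi_{\theta'}$ (conjugates of $\psi_n|_F$ by matrices near $w$) shows that $\Psi_{\theta'}$ itself is open.

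\emph{Normal form.} Set $T:=w^{-1}\phi(t)w$. Then $T$ commutes with $\psi_n(F)$, and $||T-\psi_n(t)||\leq C\theta'$ for a constant $C$ depending only on $||\psi_n(t)||$. As an $F$-module, $\mathbb{C}^n$ splits as $\mathbb{C}^d\oplus\mathbb{C}^{n-d}$, where $\psi_0$ acts on the first summand and the action on the second is trivial. The representation $\psi_0$ is irreducible over $\mathbb{C}$, and since $d\geq 2$ it is not the trivial character. By Schur's lemma, $T=\textup{diag}(\lambda_1\textup{I}_d, B)$ with $\lambda_1\in\mathbb{C}$ and $B\in\mathsf{GL}_{n-d}(\mathbb{C})$, and $B$ is close to $\textup{diag}(\lambda^{-d},\Theta_n)$. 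The eigenvalue $\lambda^{-d}$ is simple and separated from the spectrum of $\Theta_n$, whose moduli are $1$, $2$ or $4^{-s}$, all bigger than $\lambda^{-d}$ because $\lambda\geq 8^n/\eta^2$. By continuity of simple eigenvalues and eigenprojections (equivalently, the implicit function theorem for block diagonalization), there is $u\in\mathsf{GL}_{n-d}(\mathbb{C})$ close to $\textup{I}$ with $u^{-1}Bu=\textup{diag}(\lambda_2,\Theta_n')$, where $\lambda_2$ is close to $\lambda^{-d}$ and $\Theta_n'$ is close to $\Theta_n$.

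Now set $w_\phi:=w\,\textup{diag}(\textup{I}_d,u)$. Since $F$ acts trivially on the last $n-d$ coordinates, $\textup{diag}(\textup{I}_d,u)$ commutes with $\psi_n(F)$, so $\phi(f)=w_\phi\psi_n(f)w_\phi^{-1}$ still holds for all $f\in F$. We also get $\phi(t)=w_\phi\,\textup{diag}(\lambda_1\textup{I}_d,\lambda_2,\Theta_n')\,w_\phi^{-1}$. Choosing $\theta'$ small makes every estimate smaller than $\theta$.

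\emph{The case $n=d+1$.} Here the block-diagonalization step is trivial: $B$ is the $1\times1$ matrix $(\lambda_2)$, so $u=1$ and $w_\phi=w$. The bounds are then explicit:
\begin{align*}
||w^{\pm1}-\textup{I}_n||\leq 2\theta',\qquad ||T-\psi_n(t)||\leq ||w^{-1}||\cdot||\phi(t)-\psi_n(t)||\cdot||w||+||\psi_n(t)||\cdot O(\theta').
\end{align*}
These give $|\lambda_1-\lambda|,|\lambda_2-\lambda^{-d}|\leq 10\,\theta'||\psi_n(t)||<\theta$ when $\theta'=\theta/(10^4||\psi_n(t)||)$.

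I expect the main obstacle to be the general-$n$ block diagonalization with quantitative control on $u$. I would handle it non-explicitly via continuity of the spectral projection onto the $\lambda^{-d}$-eigenline of $B$, which is why no explicit $\theta'$ is claimed in that case.
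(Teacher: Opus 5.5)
Your proposal is correct and follows essentially the same route as the paper: local rigidity of $\psi_n|_F$ gives openness, Schur's lemma forces $w^{-1}\phi(t)w=\textup{diag}(\lambda_1\textup{I}_d,B)$, a perturbative block decomposition of $B$ near $\textup{diag}(\lambda^{-d},\Theta_n)$ follows, and one sets $w_\phi:=w\,\textup{diag}(\textup{I}_d,u)$. The differences are minor: you prove local rigidity directly with the averaged intertwiner $\frac{1}{|F|}\sum_{f}\phi(f)\psi_n(f)^{-1}$ where the paper cites Weil; you use only that $\lambda^{-d}$ is a simple eigenvalue rather than that all eigenvalues of $\textup{diag}(\lambda^{-d},\Theta_n)$ are distinct; and you state explicitly the observation, needed in the Schur step, that $d\geq 2$ rules out a trivial summand.
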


\begin{proof} Since $\textup{diag}(\lambda^{-d}, \Theta_n)$ has all of its eigenvalues distinct, there is $\theta_0>0$ such that any matrix in $\mathsf{GL}_{n-d}(\mathbb{C})$, within the $\theta_0$-ball of $\textup{diag}(\lambda^{-d}, \Theta_n)$, is of the form $h_0 \textup{diag}(\lambda_0, \Theta_n')h_0^{-1}$ where $||h_0-\textup{I}_{n-d}||<10^{-2}\theta$, $|\lambda_0-\lambda^{-d}|<\theta$ and $||\Theta_n- \Theta_n'||<\theta$. Now take $\theta':=\frac{\theta_0}{10^4 ||\psi_n(t)||}$; if $n=d+1$ (i.e. $\Theta_n$ is the empty block) take $\theta':=\frac{\theta}{10^{4}||\psi_n(t)||}$.

Since $F$ is finite, the restriction $\psi_{n}|_{F}:F\rightarrow \mathsf{GL}_n(\mathbb{C})$ is locally rigid\footnote{Given $\zeta>0$, there is $\zeta'>0$ such that any representation $\pi:F\rightarrow \mathsf{GL}_n(\mathbb{C})$ with $||\pi(f)-\psi_n(f)||<\zeta'$ for every $f\in F$, there exists $b\in \mathsf{GL}_n(\mathbb{C})$ with $||b-\textup{I}_n||<\zeta$ and $\pi(f)=b\psi_n(f)b^{-1}$ for every $f\in F$.} \cite{Weil} and $\Psi_{\theta'}$ is an open neighborhood of $\psi_n$. Fix $\phi\in \Psi_{\theta'}$ with $\phi|_F=w(\psi_n|_F)w^{-1}$ for some $||w-\textup{I}_n||<\theta'$. Since $w^{-1}\phi(t) w$ centralizes $\psi_n(F)$ and $\psi_0(F)\subset \mathsf{GL}_d(\mathbb{C})$ is irreducible, we have $$\phi(t)=w\psi_n'(t)w^{-1}, \ \psi_n'(t):=\textup{diag}\big(\lambda_1 \textup{I}_d,  \Theta_{n1}\big)$$ for some $\lambda_1\in  \mathbb{C}$ and $\Theta_{n1}\in \mathsf{GL}_{n-d}(\mathbb{C})$. Then, note that \begin{align*} | | \psi_n'(t)-\psi_n(t) | | & \leq 4\theta'+4| | w-\textup{I}_n| |\cdot | | \psi_n(t)| |+4 | |w | |\cdot | |\psi_n(t)| |\cdot | |w^{-1}-\textup{I}_n||<\theta_1.  \end{align*}

%\leq |  w | | \cdot | | w^{-1} | |\cdot  | | \phi(t)-w \psi_n(t)w^{-1}| |\\ &

 This implies $|\lambda_1-\lambda|<\theta$ and $||\Theta_{n1}-\textup{diag}(\lambda^{-d},\Theta_n)||<\theta_1$. By the choice of $\theta_1>0$, there are $h_0\in \mathsf{GL}_{n-d}(\mathbb{C})$, $||h-\textup{I}_n||<10^{-2}\theta$, and $\Theta_n'\in \mathsf{GL}_{n-d-1}(\mathbb{C})$ with $\Theta_{n,1}=h_0\textup{diag}(\lambda_2, \Theta_n')h_0^{-1}$, $||\Theta_n'-\Theta_n||<\theta$ and $|\lambda_2-\lambda^{-d}|<\theta$. Finally, we \hbox{take $w_\phi:=w\textup{diag}(1,h_0)$.}\end{proof} 

Consider  the flag $(y,W)\in \mathcal{F}_{1,n-1}(\mathbb{C})$, \begin{align*} (y,V)=\big([v_1+e_{d+1}],\big(v_2 -(v_1\cdot v_2)e_{d+1} \big)^{\perp}\big).\end{align*} Given $0<\eta\leq1$ in (\ref{v1-ineq}), let $\theta:=10^{-2}\eta$ and fix  $\Psi_{\theta'}\subset \textup{Hom}(\mathbb{Z}\times F,\mathsf{GL}_n(\mathbb{C}))$ the neighborhood of $\psi_{n}:\mathbb{Z}\times F\rightarrow \mathsf{SL}_n^{\pm 1}(\mathbb{R})$ defined in Claim \ref{claim3}. 

\begin{claim} \label{claim4}For any $\psi_{n}'\in \Psi_{\theta'}$, $f\in F$ and $p\in \mathbb{Z}$ with $ft^p\neq 1$, $$\psi_{n}'(ft^p)B_{\theta}(y)\subset \mathbb{P}(\mathbb{C}^n)\smallsetminus \mathcal{N}_{\theta}(\mathbb{P}(W))$$ and for every $[u]\in B_{\epsilon}(y)$, $\big|\big|\psi_{n}'(ft^p)u\big| \big|\geq \frac{\theta}{10}\lambda^{|p|} ||u||$.\end{claim}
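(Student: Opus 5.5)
We plan to use Lemma \ref{claim3} to conjugate every $\psi_n'\in\Psi_{\theta'}$ into normal form: write $\psi_n'(t)=w_\phi D w_\phi^{-1}$ with $D=\textup{diag}(\lambda_1\textup{I}_d,\lambda_2,\Theta_n')$ and $\psi_n'(f)=w_\phi\psi_n(f)w_\phi^{-1}$, where $||w_\phi^{\pm1}-\textup{I}_n||<\theta$. Then $\psi_n'(ft^p)=w_\phi\,\psi_n(f)D^p\,w_\phi^{-1}$. Since $w_\phi^{\pm1}$ are $\theta$-close to the identity, they move points of $\mathbb{P}(\mathbb{C}^n)$ by $O(\theta)$ and distort norms by a factor of at most $2$ (Lemma \ref{dist}). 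So, at the cost of shrinking constants, we may take $u$ within $O(\theta)$ of $v_1+e_{d+1}$ and estimate $|\psi_n(f)D^p u\cdot \omega|$ with $\omega:=v_2-(v_1\cdot v_2)e_{d+1}$. Together with (\ref{distance}), this gives both the distance estimate and the norm growth.

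The computation splits into three cases according to $p$.

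\emph{Case $p>0$.} $D^p$ is dominated by $\lambda_1^p$ on the first $d$ coordinates, and the other blocks are smaller by a factor $(\lambda^{-d}+\theta)^p/(\lambda-\theta)^p$ or $(2+\theta)^p/(\lambda-\theta)^p$. So $\psi_n(f)D^p u\approx\lambda_1^p\psi_0(f)v_1$ up to relative error $O(\theta)+O(\lambda^{-1})$. Its pairing with $\omega$ is $\lambda_1^p\,\psi_0(f)v_1\cdot v_2$, which has modulus at least $\eta|\lambda_1|^p$ by (\ref{v1-ineq}). With $\theta=10^{-2}\eta$ and $\lambda\ge 8^n/\eta^2$, the error terms are below $\eta/2$, so the distance to $\mathbb{P}(W)$ exceeds $\theta$ after the $w_\phi$-correction. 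The norm bound $\gtrsim\eta\lambda^{p}||u||$ follows from the same estimate.

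\emph{Case $p<0$.} $D^p$ is dominated by $\lambda_2^{p}\approx\lambda^{d|p|}$ on $e_{d+1}$. Here we use $\lambda^{d}\gg 4^s\ge||\Theta_n'^{-1}||$, since the rotation blocks have norm at most $2$ and the last entry is $4^{-s}$. Hence the image is close to $e_{d+1}$, and $|e_{d+1}\cdot\omega|=|v_1\cdot v_2|\ge\eta$ by (\ref{v1-ineq}) with $f=1$. This again gives the distance and norm bounds, with growth $\lambda^{d|p|}\ge\lambda^{|p|}$.

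\emph{Case $p=0$, $f\neq1$.} The image of $u\approx v_1+e_{d+1}$ is $\psi_0(f)v_1+e_{d+1}$. Its pairing with $\omega$ is $\psi_0(f)v_1\cdot v_2-v_1\cdot v_2=(\psi_0(f)v_1-v_1)\cdot v_2$, which has modulus at least $\eta$ by the second inequality of (\ref{v1-ineq}). This is exactly why $W$ was chosen that way. The norm bound is trivial here because $\psi_n(f)$ is orthogonal.

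The main obstacle is bookkeeping the perturbation errors uniformly in $p$. In Case $p>0$ we must confirm that the mixing by $w_\phi$ of the small block into the dominant block contributes only a relative $O(\theta)$ error, independent of $p$. We handle this by writing
\[
w_\phi\psi_n(f)D^pw_\phi^{-1}u=w_\phi\psi_n(f)D^p u'
\]
with $u'=w_\phi^{-1}u$ and $||u'-(v_1+e_{d+1})||\le 3\theta||u||$, and then noting $||D^p(u'-u_0)||\le||D^p||\cdot 3\theta||u||$ while $||D^p u_0||\approx||D^p||$. The resulting relative error is $O(\theta)$ against the $\eta$ lower bound, and $\theta=10^{-2}\eta$ leaves room. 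The condition $\lambda\ge8^n/\eta^2$ absorbs the remaining constants.
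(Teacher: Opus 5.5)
Your argument follows the paper's route: the normal form from Lemma~\ref{claim3}, pairing with $v_2-(v_1\cdot v_2)e_{d+1}$ and applying (\ref{distance}), and a split by the sign of $p$. The paper merges $p=0$, $f\neq 1$ into the $p>0$ case, since $(\lambda_2/\lambda_1)^0=1$ turns the main term into $(\psi_0(f)v_1-v_1)\cdot v_2$.

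\textbf{Gap in the case $p<0$.} Lemma~\ref{claim3} gives only the additive bound $|\lambda_2-\lambda^{-d}|<\theta$. Since $\theta=10^{-2}\eta$ is enormous compared with $\lambda^{-d}$, your ``$\lambda_2^{p}\approx\lambda^{d|p|}$'' is not available. All you know is $|\lambda_2|<\lambda^{-d}+\theta$, so $|\lambda_2|^{-1}$ is only guaranteed to be of order $100/\eta$. This has two consequences:
\begin{itemize}
\item $100/\eta$ is far below $\lambda\geq 8^n/\eta^2$, so the growth $\lambda^{|p|}$ does not follow.
\item It need not dominate $\|(\Theta_n')^{-1}\|$, which is about $4^s$ when $n=2s+d+2$. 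Worse, $\|\Theta_n'-\Theta_n\|<\theta$ does not control $\|(\Theta_n')^{-1}\|$ at all once $\theta>4^{-s}$. So you cannot conclude that $D^{p}$ is dominated by its $e_{d+1}$-entry.
\end{itemize}
The paper's own Case 2 has the same hole. It uses $|\lambda_2|\leq 2\theta$ and then asserts $(2\theta)^{-|p|}\geq\|(\Theta_n')^{p}\|$ and growth $\lambda^{|p|}$.

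The repair is to shrink $\theta'$, which Lemma~\ref{claim3} only asserts to exist. Because $\lambda^{-d}$ is a simple eigenvalue of $\textup{diag}(\lambda^{-d},\Theta_n)$ and $\Theta_n$ is invertible, you may additionally require $|\lambda_2-\lambda^{-d}|<\frac12\lambda^{-d}$ and $\|(\Theta_n')^{-1}\|\leq 2\|\Theta_n^{-1}\|\leq 2\cdot 4^{s}$. This gives $|\lambda_2|^{-1}\geq\frac23\lambda^{d}\geq\lambda$ and $|\lambda_2|\,\|(\Theta_n')^{-1}\|\leq 3\cdot 4^{s}\lambda^{-d}<\frac12$, and then your $p<0$ computation goes through.

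\textbf{Slip in the case $p>0$.} Your pairing gives a lower bound of order $\eta|\lambda_1|^{p}$, and Lemma~\ref{claim3} only gives $|\lambda_1|>\lambda-\theta$. So ``$\gtrsim\eta\lambda^{p}\|u\|$'' is not uniform in $p$. In fact the inequality $\|\psi_n'(ft^p)u\|\geq\frac{\theta}{10}\lambda^{|p|}\|u\|$ is false as stated. Take $\psi_n'|_F=\psi_n|_F$ and $\psi_n'(t)=\textup{diag}\big((\lambda-\theta'/2)\textup{I}_d,\lambda^{-d},\Theta_n\big)$; this lies in $\Psi_{\theta'}$. With $f=1$ and $u=v_1+e_{d+1}$, one has $\|\psi_n'(t^p)u\|\leq(\lambda-\theta'/2)^p+1$, which is eventually smaller than $\frac{\theta}{10}\lambda^{p}\sqrt{2}$. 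The paper's Case 1 makes the same slip. What your argument, or the paper's, actually proves is the bound with $\lambda$ replaced by, say, $\lambda/2$. That uniform exponential rate is all Lemma~\ref{pingpong-lem} needs.
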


\begin{proof} By Claim \ref{claim3}, write $\psi_{n}'(t)=w_0\textup{diag}\big(\lambda_1\textup{I}_d,\lambda_2, \Theta_n' \big)w_0^{-1}$, \hbox{$\psi_{n}'(f)=w_0\psi_{n}(f)w_0^{-1}$ for $f\in F$,} $$|\lambda_1-\lambda|<\theta, \ |\lambda_2|\leq \lambda^{-d}+\theta\leq 2\theta, \ | |w_0^{\pm 1}-\textup{I}_n| |<\theta$$ and $||\Theta_n-\Theta_n'||<\theta,  ||(\Theta_n')^{\pm 1}||\leq 5^n.$

Fix $[u]\in B_{\theta}(y)$ such that $||u-(v_1+e_{d+1})||\leq 2\theta$ and $||w_0^{-1}u-(v_1+e_{d+1})||\leq 6\theta$. Note also that $||w_0^tv_2-(v_1\cdot v_2)w_0^t e_{d+1}-(v_2-(v_1\cdot v_2)e_{d+1}||\leq 6 \theta$. There are two cases to consider.
\medskip

%By Lemma \ref{dist}, as $||w_0^{\pm 1}||\leq 2$ and $C(w_0)\leq 8$, it follows that $w_0^{-1}B_{\theta}(y)\subset B_{20\theta}(y)$. Fix $[u]\in B_{\theta}(y)$ and write $w_0^{-1}u=[u_1+\mu e_{d+1}+\omega]$, where $u_1\in \mathbb{R}^d$, $|u_1-v_1|<20\theta$, $|\mu-1|<20\theta$ and if $n\geq d+2$, $\omega \in \mathbb{R}^{n-d-1}$ and $||\omega||\leq 20\theta$.

\noindent {\em Case 1: $p= 0$ and $f\in F\smallsetminus\{1\}$ or $p>0$ and $f\in F$.} Since $\lambda_1>\frac{\lambda}{2}$, $\frac{\lambda_2}{\lambda_1}<\frac{1}{\lambda}$ and $\lambda>\frac{8^n}{\eta^{2}}$, by using (\ref{v1-ineq}), since $||\psi_n'(ft^p)||\leq ||w_0||\cdot||w_0^{-1}||\cdot |\lambda_1|^p\leq 4|\lambda_1|^p$, we have the bounds,
\begin{align*} \frac{\big|\psi_{n}'(ft^p)u\cdot \big(v_2-(v_1\cdot v_2)e_{d+1}\big) \big|}{||\psi_n'(ft^p)||} &\geq \frac{1}{4|\lambda_1|^p}\Big| \textup{diag}\big(\lambda_1^p \psi_0(f),\lambda_2^p, (\Theta_n')^p \big)(w_0^{-1}u)\cdot \big(w_0^{t}v_2-(v_1\cdot v_2) w_0^t e_{d+1}\big)\Big|\\
& \geq \frac{1}{4|\lambda_1|^p}\Big| \textup{diag}\big(\lambda_1^p\psi_0(f),\lambda_2^p, (\Theta_n')^p \big)(w_0^{-1}u)\cdot \big(v_2-(v_1\cdot v_2) e_{d+1}\big)\Big|-2\theta \\  & \geq \frac{1}{4}\Big| \big(\psi_0(f)v_1\cdot v_2\big)-\frac{\lambda_2^p}{\lambda_1^p}(v_1\cdot v_2) \Big| -8\theta \geq \frac{\eta}{4}-10\theta. \end{align*} 

%&= \frac{1}{4}\Big| \big(\psi_0(f)u_1\cdot v_2\big)-\mu \frac{\lambda_2^p}{\lambda_1^p} (v_1\cdot v_2) \Big| -5\theta \\ 

\par Hence $\big|\psi_{n}'(ft^p)u\cdot (v_2-(v_1\cdot v_2) w_0^t e_{d+1}) \big| \geq 10\theta | |\psi_{n}'(ft^p)|| $,  $||\psi_{n}'(ft^p)u|| \geq \frac{\theta}{4}\lambda^p ||u||$ and $[\psi_{n}'(ft^p)u]\in \mathbb{P}(\mathbb{C}^n)\smallsetminus \mathcal{N}_{\theta}(\mathbb{P}(W))$.
\medskip

\noindent {\em Case 2: $p<0$}. We have $|\lambda_2|^{-|p|}\geq (2\theta)^{-|p|}\geq \max\{|\lambda_1|^{-|p|}, ||(\Theta_n')^p||\}$,  $||\psi_n'(ft^p)||\leq 4|\lambda_2|^{p}$. By working similarly as in Case 1 we have the bounds, \begin{align*} \frac{\big|\psi_{n}'(ft^p)u\cdot \big(v_2-(v_1\cdot v_2)e_{d+1}\big) \big|}{||\psi_n'(ft^p)||} &\geq \frac{1}{4|\lambda_2|^p}\Big| \textup{diag}\big(\lambda_1^p\psi_0(f),\lambda_2^p, (\Theta_n')^p \big)(w_0^{-1}u)\cdot \big(w_0^{t}v_2-(v_1\cdot v_2) w_0^t e_{d+1}\big)\Big|\\
& \geq  \frac{1}{4}\Big| \frac{\lambda_1^{p}}{\lambda_2^{p}}\big(\psi_0(f)u_1\cdot v_2\big)- (v_1\cdot v_2) \Big| -5\theta\\ & \geq \frac{1}{4}\big|(v_1\cdot v_2) \big| -5\theta -\frac{| |\psi_0(f)u_1||}{4\lambda}\geq    10\theta. \end{align*} Therefore, $[\psi_{n}'(g)u]\in \mathbb{P}(\mathbb{C}^n)\smallsetminus \mathcal{N}_{\theta}(\mathbb{P}(W))$ and $| |\psi_{n}'(ft^p)u| | \geq 5\theta | |\psi_{n}'(ft^p)| |\geq \frac{\theta}{2}\lambda^{|p|}||u||$.\end{proof}

 By Lemma \ref{perturb2} there is a representation $\varphi:\mathsf{F}_m\rightarrow \mathsf{SL}_n(\mathbb{R})$ and an open neighborhood $\Omega_n$ of $\varphi$, such that for any $\varphi'\in \Omega_n$ and $g\in \mathsf{F}_m\smallsetminus \{1\}$ the following hold:
\begin{itemize}
\item $\varphi'(g)\big(\mathbb{P}(\mathbb{C}^n)\smallsetminus \mathcal{N}_{\theta}(\mathbb{P}(W))\big)\subset B_{\theta}(y)$;
\item $| | \varphi'(g) u| |\geq 2^{|g|}\theta^{-6}  | |u| |$ for any $[u]\in B_{\epsilon}(y)$; and 
\item if $r\in \{1,\ldots,n-1\}$ is even, \hbox{$\Omega \cap \textup{Anosov}_r\left(\mathsf{F}_m,\mathsf{GL}_n(\mathbb{R})\right)$ is empty.}\end{itemize}

Let $\Gamma:= \big(\mathbb{Z}\times F\big)\ast \mathsf{F}_{m}$ and $\psi_n: \Gamma \rightarrow \mathsf{SL}_n^{\pm}(\mathbb{R})$ be the unique representation with $\psi_n|_{\mathbb{Z}\times F}$ defined in (\ref{rho1-def}) and $\psi_n|_{\mathsf{F}_m}=\varphi$. Claim \ref{claim4} and Lemma \ref{pingpong-lem}, applied for the ping-pong sets $\mathcal{C}_1:=B_{\theta}(y)$, $\mathcal{C}_2:=\mathbb{P}(\mathbb{C}^n)\smallsetminus \mathcal{N}_{\theta}(\mathbb{P}(W))$ and the representations $\Psi_{\theta'}$ and $\Omega_n$ respectively, shows that $\Psi_{\theta'} \times \Omega_n$  contains entirely of quasi-isometric embeddings of $\Gamma$ into $\mathsf{GL}_n(\mathbb{C})$.

By the choice of $\varphi$ and $\psi_n|_{\mathbb{Z}\times \mathsf{D}_8}$ in (\ref{rho1-def}), for every $\psi_n'\in \Psi_{\theta'}$, $\psi_n'(t)$ is not $\{1,\ldots,d-1\}$-proximal, thus $(\Psi_{\theta'}\times \Omega_n)\cap \textup{Anosov}_j(\Gamma,\mathsf{GL}_n(\mathbb{C}))$ is empty when $j=1,\ldots,d-1$. Since $d$ is even, $\psi_n(t)$ is not $r$-proximal when $r$ is odd. Moreover, by the choice of $\varphi$, since a generator $\varphi(\mathsf{F}_m)$ is not $r$-biproximal when $r$ is even, we deduce that $\psi_n:\Gamma \rightarrow \mathsf{GL}_n(\mathbb{R})$ is not Anosov.

Now we check that (\ref{Arch-det-3}) holds. If $r$ is even this is clear by the choice of $\varphi$. If $n\geq d+1$, the eigenvalues of $\psi_{n}(t)$ are $$ \begin{matrix}[1] \lambda,\ldots, \lambda, e^{i\xi},e^{-i\xi},\ldots, e^{i\xi^s},e^{-i\xi^s}, \lambda^{-d} \ \ \ \ \ \ \ \ \ \ \ \ \  & n=2s+d+1  \\
\lambda, \ldots,\lambda, 2e^{i\xi},2e^{-i\xi},\ldots, 2e^{i\xi^s},2e^{-i\xi^s},4^{-s}, \lambda^{-d} & n=2s+d+2. \end{matrix}$$ As $\xi$ is transcedental over $\mathbb{Q}(\pi)$ and $d$ is odd, for any $d+1\leq d+q\leq n-2$ odd integer, the matrix  $\wedge^{d+q} \psi_{n}(t)$ has all of its eigenvalues of maximum modulus in $\mathbb{C}\smallsetminus \mathbb{R}$. If $n>d+1$, by shrinking if necessary the open set $\Psi_{\theta'}$, we ensure that $\wedge^{q+d}\psi_{n}'(t)$ is not $1$-proximal for any $\psi_n'\in \Psi_{\theta'}$. This shows that $\psi_n'$ is not $(d+q)$-Anosov when $d+q$ is odd. In conclusion, the open neighborhood $\mathcal{O}_n:=\Psi_{\theta'}\times \Omega_n$ of $\psi_n$ \hbox{satisfies (\ref{Arch-det-1}), (\ref{Arch-det-2}) and (\ref{Arch-det-3}).} \end{proof}

\begin{rmk}\label{rmk-378}\normalfont{Examples of Zariski dense, non-Anosov, robust quasi-isometric embeddings of (non-virtually free) hyperbolic groups into $\mathsf{SL}_n(\mathbb{R})$ were known at least when $n\geq 756$. For this, fix $\Delta$ a cocompact lattice in $\mathsf{Sp}(2,1)$ and the irreducible proximal representation $\mathsf{Sp}(2,1)\rightarrow \mathsf{SL}_{14}(\mathbb{C})$, as in the proof of \cite[Cor. 1.2]{Tso-robust}. By extending scalars $\mathsf{SL}_{14}(\mathbb{C})\rightarrow \mathsf{SL}_{28}(\mathbb{R})$ and composing with the exterior power $\wedge^2:\mathsf{SL}_{28}(\mathbb{R})\rightarrow \mathsf{SL}(\wedge^2 \mathbb{R}^{28})$, one obtains a representation $\chi:\mathsf{Sp}(2,1)\rightarrow \mathsf{GL}_{378}(\mathbb{R})$ which is $m$-proximal, $1\leq m \leq 189$, if and only if $m\in \{1,17,61,141\}$. As the restriction $\chi|_{\Delta}$ is $m$-Anosov for the aforementioned odd values of $m\leq 189$ (see \cite[Prop. 4.4]{GW}), the proof of \cite[Thm. 1.1 \& 3.1]{Tso-robust}, applied for $\chi|_{\Delta}$ and $p=2$, provides for any $n\geq 756$ a robust quasi-isometric embedding of a free product $\Delta_{1n}\ast \Delta_{2n}$ (where $\Delta_{in}$ are finite-index subgroups of $\Delta$) into $\mathsf{SL}_{n}(\mathbb{R})$ which is not in the closure of Anosov representations.}\end{rmk}

\section{Robust quasi-isometrically embedded free semigroups of $\mathsf{GL}_3(\mathbb{C})$} \label{Sem-C}  In this section we prove Theorem \ref{Archimedean-sem}. Throughout this section $k=\mathbb{R}$ or $\mathbb{C}$. 

\begin{proposition}\label{svg-est} Let $x:=[x_1e_1+\cdots+x_ne_n]\in \mathbb{P}(k^n)$ be a unit vector and $\epsilon>0$ with $\min_i |x_i|\geq 10^2\epsilon$. Fix $V\in \mathsf{Gr}_{n-1}(k^n)$ and suppose that $\mathcal{A}\subset \mathsf{GL}_n(k)$ is a non-empty set of diagonal matrices with the property that for any $g\in \mathcal{A}$ and $p\in \mathbb{Z}^{\ast}$, $$g^pB_{\epsilon}(x)\subset \mathbb{P}(k^n)\smallsetminus \mathcal{N}_{\epsilon}(\mathbb{P}(V)).$$ Given $M>1$, define the subset of $\mathsf{GL}_n(k)$, \begin{align}\label{B(x,V)}\mathcal{B}_{M,\epsilon}(x,V):=\left\{ h\begin{pmatrix}\kappa_1 & & \\ & \ddots & \\ & & \kappa_n \end{pmatrix}h^{-1}:  \begin{matrix}[1.3] ||h^{\pm 1}||\leq M,  \ [he_1]\in B_{\frac{\epsilon}{10}}(x)\\ \mathbb{P}(he_1^{\perp})\subset \mathcal{N}_{\frac{\epsilon}{10}}(\mathbb{P}(V)) \\  |\kappa_1|\geq \left(\frac{10M}{\epsilon}\right)^{12} |\kappa_i|, \ i=2,\ldots,n \end{matrix}  \right\}.\end{align}  Any reduced product of the form $g_{r}^{p_r}w_{r}^{s_{r}}\cdots g_1^{p_1}w_1^{s_1}$, where $r\geq 2$, $p_1,\ldots,p_r\in \mathbb{Z}$, $s_1,\ldots,s_r\in \mathbb{N}\cup \{0\}$, $g_1,\ldots,g_r\in \mathcal{A}$, $w_1,\ldots,w_{r}\in \mathcal{B}_{M,\epsilon}(x,V)$ satisfies the estimate \begin{align}\label{inequality}\frac{\sigma_1( w_{r}^{s_r}g_{r}^{p_{r}}\cdots w_1^{s_1}g_1^{p_1})}{\sigma_2( w_{r}^{s_r}g_{r}^{p_{r}}\cdots w_1^{s_1}g_1^{p_1})}\geq \epsilon^{5}\prod_{i=1}^{r} \left(\sqrt[3]{\frac{\ell_1(w_i^{s_i})}{\ell_2(w_i^{s_i})}} \frac{\ell_1(g_i^{p_i})}{\ell_2(g_i^{p_i})}\right).\end{align}\end{proposition}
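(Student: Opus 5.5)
We want a lower bound on $\sigma_1/\sigma_2$ of a long product. The tool is $\sigma_1\sigma_2(T)=\|\wedge^2 T\|$: it suffices to bound $\sigma_1$ from below by tracking one well-placed vector through the ping-pong, and to bound $\sigma_1\sigma_2$ from above by submultiplicativity of $\|\wedge^2\cdot\|$. Precisely, for $T=w_r^{s_r}g_r^{p_r}\cdots w_1^{s_1}g_1^{p_1}$ we have
\[\frac{\sigma_1(T)}{\sigma_2(T)}=\frac{\sigma_1(T)^2}{\|\wedge^2T\|}\geq \frac{\|Tv\|^2}{\|v\|^2\prod_i\|\wedge^2 w_i^{s_i}\|\,\|\wedge^2 g_i^{p_i}\|}.\]
For diagonal $g$ we get $\|\wedge^2 g^p\|=\ell_1(g^p)\ell_2(g^p)$ and $\|g^p\|=\ell_1(g^p)$ exactly. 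For $w=h\,\mathrm{diag}(\kappa_j)h^{-1}$ we get $\|\wedge^2 w^s\|\leq M^4\ell_1(w^s)\ell_2(w^s)$. Since the desired bound involves only a cube root of $\ell_1/\ell_2$ for the $w$-factors, these constant losses $M^{O(1)}$ are absorbed using $\ell_1(w^s)/\ell_2(w^s)\geq (10M/\epsilon)^{12s}$. This leaves a factor $(\ell_1/\ell_2)^{2/3}\geq (10M/\epsilon)^{8s}$ to spend on errors.

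Next we choose the vector and run the ping-pong. Take $v$ representing a point of $B_\epsilon(x)$, and track both its projective position and its norm growth.

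For a diagonal $g\in\mathcal{A}$, the hypothesis sends $B_\epsilon(x)$ into the complement of $\mathcal N_\epsilon(\mathbb P(V))$. For the norm, because $\min_i|x_i|\ge 10^2\epsilon$, any $u$ with $[u]\in B_\epsilon(x)$ has all coordinates of size $\gtrsim \|u\|$. Hence $\|g^pu\|\geq c\,\ell_1(g^p)\|u\|$ with $c\approx 10^2\epsilon-\epsilon$: the top diagonal entry of $g^p$ multiplies a coordinate that is not small.

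For $w\in\mathcal{B}_{M,\epsilon}(x,V)$, the attracting point $[he_1]$ lies in $B_{\epsilon/10}(x)$, and the repelling hyperplane $\mathbb P(he_1^\perp)$ lies within $\epsilon/10$ of $\mathbb P(V)$. A point $z$ outside $\mathcal N_\epsilon(\mathbb P(V))$ is therefore at distance $\gtrsim \epsilon$ from the repelling hyperplane. The argument of Lemma \ref{Lip} (the estimate (\ref{norm-power}) and the contraction toward $[he_1]$) then gives $\|w^sz\|\geq \tfrac{\epsilon}{M^2}\ell_1(w^s)\|z\|$. It also gives $[w^sz]\in B_\epsilon(x)$ when $s\geq1$, since the gap $(10M/\epsilon)^{12}$ forces contraction to within $\epsilon/10+\epsilon/10$ of $x$.

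When $s_i=0$ the $w$-factor is absent, but then $g_{i+1}^{p_{i+1}}g_i^{p_i}$ is a product of diagonal matrices. I would handle this either by merging it into a single diagonal factor or by noting that reducedness prevents consecutive $g$'s. Likewise $p_i=0$ merges adjacent $w$-powers.

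Multiplying the per-step factors gives $\|Tv\|\geq \prod_i (c\,\ell_1(g_i^{p_i}))(\tfrac{\epsilon}{M^2}\ell_1(w_i^{s_i}))\|v\|$. Squaring and dividing by the upper bound yields
\[\prod_i c^2\,\frac{\ell_1(g_i^{p_i})}{\ell_2(g_i^{p_i})}\cdot \frac{\epsilon^2}{M^8}\,\frac{\ell_1(w_i^{s_i})}{\ell_2(w_i^{s_i})}.\]
The factor $\epsilon^2 M^{-8}(\ell_1/\ell_2)^{2/3}\geq 1$ absorbs the $w$-losses.

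The main obstacle is the per-step constant $c^2$ attached to each $g$-factor, which would otherwise accumulate into a loss $c^{2r}$. I would avoid it by pairing each $g_i$ with the following $w_i$ (when $s_i\ge1$) and letting the $w$-gain absorb $c^2$ too. The remaining difficulty is the cases where some $s_i=0$. There I would use that $g_i^{p_i}$ and $g_{i+1}^{p_{i+1}}$ are commuting diagonal matrices, so consecutive $g$'s combine, and the lemma is stated for reduced products only. At most boundary terms remain unpaired, which produces the global $\epsilon^5$. The first thing I would check carefully is that the accounting leaves at most a bounded number of unabsorbed $\epsilon$-factors.
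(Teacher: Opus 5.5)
Your proposal is correct, and it takes a genuinely different route from the paper.

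\textbf{Your route.} You use $\sigma_1(T)/\sigma_2(T)=\sigma_1(T)^2/\|\wedge^2T\|$ and bound the two pieces separately.
\begin{enumerate}
\item You bound $\sigma_1(T)$ from below by following a single vector through the ping-pong. A diagonal step multiplies its norm by at least $99\epsilon\,\ell_1(g^p)$, because every coordinate of a unit vector representing a point of $B_\epsilon(x)$ has modulus at least $99\epsilon$. A $w$-step multiplies it by at least $\tfrac{\epsilon}{2M^2}\ell_1(w^s)$, because the point stays at distance at least $\tfrac{9\epsilon}{10}$ from $\mathbb{P}(he_1^{\perp})$.
\item You bound $\|\wedge^2T\|$ by submultiplicativity. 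It is exactly $\ell_1\ell_2$ on diagonal factors and at most $M^4\ell_1\ell_2$ on $w$-factors.
\end{enumerate}
Each pair $(g_i,w_i)$ then costs a factor of order $\epsilon^4M^{-8}$, which $\big(\ell_1(w_i^{s_i})/\ell_2(w_i^{s_i})\big)^{2/3}\geq(10M/\epsilon)^{8}$ pays for. An unpaired final $g_r$ costs only $(99\epsilon)^2\geq\epsilon^5$.

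\textbf{The paper's route.} It pushes a small ball $B_\zeta(y)$ through the word. Diagonal factors contract it by $\epsilon^{-2}\ell_2/\ell_1$ (Claim \ref{claim-ball1}, via Lemma 2.1 of \cite{Weisman-T}). The $w$-factors contract it by $\sqrt{\ell_2/\ell_1}$ while mapping it into $B_{\epsilon^{11}}([he_1])$ (Claim \ref{claim-ball2}). The total contraction is converted into the gap $\sigma_1/\sigma_2$ by Lemma 2.2 of \cite{Weisman-T}. This forces $y$ to be chosen away from $\beta^{-1}e_1^{\perp}$, where $\beta$ comes from the Cartan decomposition of the product itself (Lemma 2.7 there).

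\textbf{What each buys.} Your version needs none of these three external lemmas. It lets the starting vector be any point of the relevant ping-pong set, independently of $T$. It also handles non-proximal elements of $\mathcal{A}$ at no extra cost: a diagonal step just contributes $(99\epsilon)^2\ell_1/\ell_2$. The paper's version stays within the dynamical framework of \cite{Weisman-T} and also records how a whole neighbourhood is contracted by the product.

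\textbf{Three details to fix when you write it up.}
\begin{enumerate}
\item When $p_1=0$, the starting vector must be taken outside $\mathcal{N}_\epsilon(\mathbb{P}(V))$, not in $B_\epsilon(x)$. The reason is that $B_\epsilon(x)$ may lie inside $\mathcal{N}_\epsilon(\mathbb{P}(V))$; in Corollary \ref{lim-div} one has $x\in\mathbb{P}(V)$.
\item Of your two ways of handling $s_i=0$, only the second is legitimate. ``Reduced'' must mean alternating, with only $p_1$ and $s_r$ allowed to vanish. A merged factor $g_{i+1}^{p_{i+1}}g_i^{p_i}$ need not satisfy the ping-pong hypothesis: if $g,g^{-1}\in\mathcal{A}$ it can be the identity, and then (\ref{inequality}) can fail. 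The paper's proof uses the same reading when it invokes $\ell_1(w_i^{s_i})/\ell_2(w_i^{s_i})\geq\epsilon^{-12}$ for every $i$.
\item The $w$-step constant must come from the direct estimate $|(h^{-1}z)_1|\geq\|h\|^{-1}|z\cdot\hat n|\geq\tfrac{\epsilon}{2M}\|z\|$, with $\hat n$ the unit normal of $h(e_1^{\perp})$. Do not take it from (\ref{norm-power}): the constant $2\theta/C(h)^2$ there is only of order $\epsilon M^{-4}$. With that constant, the exponent $12$ no longer leaves enough room once $M$ is large compared with $\epsilon^{-1}$.
\end{enumerate}
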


\begin{proof} We shall follow the proof of \cite[Thm. 1.2 (ii) \& (iii)]{Weisman-T}. However, note that the estimates of \cite[Thm. 1.2]{Weisman-T} do not directly imply (\ref{inequality}) since we do not assume that $\mathcal{A}$ contains of $1$-biproximal matrices. We will need the following claims.

\begin{claim}\label{claim-ball1} Fix $y\in B_{\frac{\epsilon}{2}}(x)$ and $0<\zeta<\frac{\epsilon^4}{10^2}$. For any $g\in \mathcal{A}$ and $p\in \mathbb{Z}^{\ast}$ we have that $$g^pB_{\zeta}(y)\subset B_{\frac{\zeta \ell_2(g^p)}{\epsilon^2\ell_1(g^p)}}(g^py).$$\end{claim}

\begin{proof}[Proof of Claim \ref{claim-ball1}] Fix $y=[y_1e_1+\cdots+y_ne_n]$, where $\sum y_i^2=1$ and $y\in B_{\frac{\epsilon}{2}}(x)$, and note that $\min_i |y_i|\geq \epsilon$. Fix also $0<\zeta<\frac{\epsilon^4}{10^2}$ and $y'\in B_{\zeta}(y)$. For any $g\in \mathcal{A}$ and $p\in \mathbb{Z}^{\ast}$, by \cite[Lem. 2.1]{Weisman-T}, we have that $$d_{\mathbb{P}}\big(g^p y, g^p y')\leq \frac{\ell_2(g^p)}{\ell_1(g^p)}\frac{d_{\mathbb{P}}(y,y')}{(\min|y_i|)( \min|y_i'|)}\leq \frac{\zeta}{\epsilon^2}\frac{\ell_2(g^p)}{\ell_1(g^p)}.\qedhere$$\end{proof}

\begin{claim}\label{claim-ball2} Fix $x' \in \mathbb{P}(k^n)\smallsetminus \mathcal{N}_{\epsilon}(\mathbb{P}(V))$ and $0<\theta<\frac{\epsilon}{2}$. For any $w\in \mathcal{B}_{M,\epsilon}(x,V)$ and $s\in \mathbb{N}$, $$w^s B_{\theta}(x')\subset B_{\theta \sqrt[]{\frac{\ell_2(w^s)}{\ell_1(w^s)}}}(w^sx') \cap B_{\epsilon^{11}}([he_1]).$$\end{claim}

\begin{proof} Since $\mathbb{P}(he_1^{\perp})\subset \mathcal{N}_{\frac{\epsilon}{10}}(\mathbb{P}(V))$, we have  $\mathbb{P}(k^n)\smallsetminus \mathcal{N}_{\epsilon}(\mathbb{P}(V))\subset \mathbb{P}(k^n)\smallsetminus \mathcal{N}_{\frac{\epsilon}{2}}(\mathbb{P}(he_1^{\perp}))$. For any $x''\in B_{\theta}(x')$, write $x'=[v']$ and $x''=[v'']$ for some $v',v''\in k^n$ unit vectors and note that  $\big|(h^{-1}v'') \cdot e_1\big|\geq \frac{\epsilon}{10M^2}||h^{-1}v''||$, $\big|(h^{-1}v') \cdot e_1\big|\geq \frac{\epsilon}{10M^2}||h^{-1}v'||$. By using \cite[Lem. 2.1]{Weisman-T} we obtain \begin{align*}d_{\mathbb{P}}\big(w^s x'', w^s x'\big)&\leq C(h)\frac{\ell_2(w^s)}{\ell_1(w^s)}\frac{d_{\mathbb{P}}(h^{-1}x',h^{-1}x'')||h^{-1}v''||\cdot ||h^{-1}v'||}{|(h^{-1}v'')\cdot e_1|\cdot |(h^{-1}v')\cdot e_1|}\\ &\leq \frac{(10M)^6}{\epsilon^2}\frac{\ell_2(w^s)}{\ell_1(w^s)}d_{\mathbb{P}}(x',x'')\leq \sqrt[]{\frac{\ell_2(w^s)}{\ell_1(w^s)}}  \theta,\\ d_{\mathbb{P}}\big(w^s x'', [he_1]\big)&\leq C(h)\frac{\ell_2(w^s)}{\ell_1(w^s)}\frac{||h^{-1}v''||}{\big|( h^{-1}v'')\cdot e_1\big|}\leq \frac{(10M)^3}{L\epsilon}\leq \epsilon^{11}.\qedhere \end{align*}\end{proof}

Now we prove the estimate of the lemma. Consider a reduced product of the form $$g_0:=w_{r}^{s_r}g_{r}^{p_{r}}\cdots w_1^{s_1}g_1^{p_1}=\beta'a^{+}\beta,$$  written in the standard Cartan decomposition of $\mathsf{GL}_n(\mathbb{C})$, where $\beta,\beta'\in \mathsf{U}(n)$, $a^{+}\in \textup{Diag}_n(\mathbb{C})$ and $s_i\in \mathbb{N}\cup\{0\}$, $p_i\in \mathbb{Z}$, $g_i\in \mathcal{A}$, $w_i\in \mathcal{B}_{M,\epsilon}(x,V)$. There are two cases to consider.
\medskip

\noindent {\em Case 1: $p_1\neq 0$.} By \cite[Lem. 2.7]{Weisman-T} there is $y\in B_{\frac{\epsilon}{2}}(x)$ with $\textup{dist}(y, \beta^{-1}e_1^{\perp})\geq \frac{\epsilon}{4}$. By Claim \ref{claim-ball1}, for any $0<\zeta<\frac{\epsilon^4}{10^2}$,  $$g_1^{p_1} B_{\zeta}(y)\subset B_{\frac{\zeta \ell_2(g_1^{p_1})}{\epsilon^2 \ell_1(g_1^{p_1})}}(g^{p_1} y).$$ As $g_1^{p_1} y\in \mathbb{P}(k^n)\smallsetminus \mathcal{N}_{\epsilon}(\mathbb{P}(V))$ and $\frac{\zeta}{\epsilon^2}\frac{\ell_2(g_1^{p_1})}{\ell_1(g_1^{p_1})}<\epsilon^2$, by Claim \ref{claim-ball2}, $$w_1^{s_1}g_1^{p_1}B_{\zeta}(y)\subset B_{\mu}(w_1^{s_1}g_1^{p_1} y), \  \mu=\frac{\zeta}{\epsilon^2}\cdot \sqrt[]{\frac{\ell_2(w_1^{s_1})}{\ell_1(w_1^{s_1})}}\frac{\ell_2(g_1^{p_1})}{\ell_1(g_1^{p_1})}\leq \frac{\epsilon^4}{10^2}$$ and also $d_{\mathbb{P}}(w_1^{s_1}g_1^{p_1}y,[he_1])\leq \frac{\epsilon}{10}$.
By working inductively we deduce that $\gamma B_{\zeta}(y)\subset B_{\mu'}(\gamma y)$, where $\mu':=\zeta\epsilon^{-2r}\prod_{i=1}^{r} \left(\sqrt{\frac{\ell_2(w_i^{s_i})}{\ell_1(w_i^{s_i})}} \frac{\ell_1(g_i^{p_i})}{\ell_2(g_i^{p_i})}\right)$. Thus, since $\textup{dist}(y, \beta^{-1}e_1^{\perp})\geq \frac{\epsilon}{4}$, by \cite[Lem. 2.2]{Weisman-T} it follows that $\frac{\sigma_1(g_0)}{\sigma_2(g_0)}\geq \frac{\zeta \epsilon}{16\mu'}$. Since for every $i$, $\frac{\ell_1(w_i^{s_i})}{\ell_2(w_i^{s_i})}\geq \epsilon^{-12}$, we deduce (\ref{inequality}). Similarly, the same bound follows when $\gamma$ starts with a \hbox{non-trivial power of an element of $\mathcal{A}$.}

\medskip

\noindent {\em Case 2: $p_1=0$ and $s_1\neq 0$.} Fix $y' \notin \mathcal{N}_{\epsilon}(\mathbb{P}(V))$ with $\textup{dist}(y', \beta^{-1}e_1^{\perp})\geq \frac{\epsilon}{4}$. By Claim \ref{claim-ball2}, for $0<\zeta<\frac{\epsilon^4}{10^2}$ we have $w_1^{s_1}B_{\zeta}(y')\subset B_{\zeta\mu''}(gy')\cap B_{\epsilon^{11}}([he_1])$, $\mu''=\sqrt[]{\frac{\ell_2(w_1^{s_1})}{\ell_1(w_1^{s_1})}}$. By applying Case 1 for the reduced word $w_{r}^{s_r}g_{r}^{p_{r}}\cdots w_2^{s_2}g_2^{p_2}$, acting on $w_1^{s_1}B_{\zeta}(y')$, \hbox{we deduce the estimate.}\end{proof}

\begin{corollary}\label{svg-cor} Let $x\in \mathbb{P}(k^n)$, $V\in \mathsf{Gr}_{n-1}(k^n)$, $\mathcal{A}, \mathcal{B}_{M,\epsilon}(x,V)\subset \mathsf{GL}_n(k)$ and $M,\epsilon>0$ be as in Proposition \ref{svg-est}. Fix $\gamma \in \mathsf{GL}_n(k)$ with $||\gamma-\textup{I}_n||\leq  \epsilon^2$, $g_1\in \mathcal{A}$ a $1$-biproximal matrix and $g_2\in \mathcal{B}_{\frac{M}{2},\epsilon^2}(x,V)$. The semigroup  $\langle \gamma g_1\gamma^{-1}, \gamma g_1^{-1}\gamma ^{-1}, g_2\rangle$ is $1$-Anosov and isomorphic to $\mathbb{Z}\ast \mathbb{Z}^{+}$. In addition, if $g_2^{-1}\in \mathcal{B}_{\frac{M}{2},\epsilon^2}(x,V)$, then the group $\langle \gamma g_1 \gamma^{-1}, \gamma g_1^{-1} \gamma^{-1}, g_2, g_2^{-1}\rangle$ is $1$-Anosov and free of rank $2$.\end{corollary}

\begin{proof} By the definition of the set $\mathcal{B}_{M,\epsilon}(x,V)$ in (\ref{B(x,V)}), as $||\gamma^{\pm 1}||\leq 2$, $||\gamma^{\pm 1}-\textup{I}_n||\leq 2\epsilon^2$, $\gamma^{-1}\mathcal{B}_{\frac{M}{2},\epsilon^2}(x,V)\gamma$ is contained in $\mathcal{B}_{M,\epsilon}(x,V)$. Since $g_1\in \mathcal{A}$ and $\gamma^{-1}g_2\gamma \in \mathcal{B}_{M,\epsilon}(x,V)$, by Proposition \ref{svg-est} and (\ref{inequality}), for any reduced word $g_1^{p_r}g_2^{s_r}\cdots g_1^{p_1}g_2^{s_1}$, $p_i\in \mathbb{Z}$, $s_i\in \mathbb{N}\cup\{0\}$, the estimate holds \begin{align}\label{svg-2}\frac{\sigma_1( \gamma^{-1}g_2^{s_r}\gamma g_{1}^{p_{r}}\cdots \gamma^{-1}g_2^{s_1}\gamma g_1^{p_1})}{\sigma_2( \gamma^{-1}g_2^{s_r}\gamma g_{1}^{p_{r}}\cdots \gamma^{-1}g_2^{s_1}\gamma g_1^{p_1})}\geq \epsilon^{5}\min\left\{\sqrt[3]{\frac{\ell_1(g_2)}{\ell_2(g_2)}} , \ \frac{\ell_1(g_1)}{\ell_2(g_1)}, \frac{\ell_{n-1}(g_1)}{\ell_n(g_1)}\right\}^{\sum(|p_i|+|s_i|)}.\end{align} This shows that the semigroup $\langle g_1, g_1^{-1}, \gamma^{-1}g_2\gamma \rangle$ is $1$-Anosov. In addition, if $g_2^{-1}\in \mathcal{B}_{\frac{M}{2}, \epsilon^2}(x,V)$, then (\ref{svg-2}) holds for any reduced word in $\{g_1^{\pm 1},\gamma^{-1}g_2^{\pm 1}\gamma\}$ and the subgroup $\langle \gamma g_1 \gamma^{-1}, g_2\rangle$ of $\mathsf{GL}_3(\mathbb{C})$ is $1$-Anosov. \end{proof}

\begin{proof}[Proof of Theorem \ref{Archimedean-sem}] Let $h:=\begin{pmatrix}[1] 1 & 0 & \frac{1}{2}\\ 1 & 1& 1\\ 1 & 1 & -1 \end{pmatrix}$. Note that $||h^{\pm 1}||\leq 2$ and consider the flags \begin{align*}(x_1,V_1)&:=\big([he_1], (h^{-t}e_3)^{\perp}\big)=\big([e_1+e_2+e_3],(e_2-e_3)^{\perp}\big)\\ (x_2,V_2)&:=\big([he_3], (h^{-t}e_1)^{\perp}\big)=\big([e_2+2e_2-2e_3],(4e_1-e_2+e_3)^{\perp}\big).\end{align*} Fix $0<\epsilon\leq 10^{-6}$, $\lambda\geq 10^2$, $\kappa\geq \epsilon^{-28}$ and consider the matrices $$A:=\begin{pmatrix} \lambda & &\\ & -\lambda &\\ & &-\lambda^{-2}\end{pmatrix}, \ B:=h\begin{pmatrix} \kappa & &\\ & 1 &\\ & &\kappa^{-1}\end{pmatrix}h^{-1}.$$ Define the subsets $\Omega_1,\Omega_2$ of $\mathsf{GL}_3(\mathbb{C})$ as follows: \begin{align*}\Omega_1&:=\Omega_1^{+}\cup \Omega_1^{-}\\
\Omega_1^{+}&:=\left\{ \gamma \begin{pmatrix}\lambda_1 & & \\ & \lambda_2 & \\ & & \lambda_3 \end{pmatrix}\gamma^{-1}:  \begin{matrix}[1.1] ||\gamma-\textup{I}_3||<\epsilon^6\\ |\lambda_1-\lambda|<\epsilon^6, \ |\lambda_2+\lambda|<\epsilon^6\\ |\lambda_3+\lambda^{-2}|<\epsilon^6, \  |\lambda_1|\geq |\lambda_2| \end{matrix}  \right\}\\ \Omega_1^{-}&:=\left\{ \gamma \begin{pmatrix}\lambda_1 & & \\ & \lambda_2 & \\ & & \lambda_3 \end{pmatrix}\gamma^{-1}:  \begin{matrix}[1.1]||\gamma-\textup{I}_3||<\epsilon^6\\ |\lambda_1-\lambda|<\epsilon^6, \ |\lambda_2+\lambda|<\epsilon^6\\ |\lambda_3+\lambda^{-2}|<\epsilon^6, \  |\lambda_1|\leq |\lambda_2| \end{matrix}  \right\}\\\Omega_2&:=\left\{ (\gamma h)\begin{pmatrix}\kappa_1 & & \\ & \kappa_2 & \\ & & \kappa_3 \end{pmatrix}(\gamma h)^{-1}:  \begin{matrix}[1.1] ||\gamma-\textup{I}_3||<\epsilon^6, \ |\kappa_1-\kappa|<\epsilon^6\\  |\kappa_2-1|<\epsilon^6, \  |\kappa_3-\kappa^{-1}|<\epsilon^6 \end{matrix}  \right\}.\end{align*} Note that $\Omega_1,\Omega_2$ contain entirely diagonalizable matrices with distinct eigenvalues and define open neighborhoods of $A$ and $B$ in $\mathsf{GL}_3(\mathbb{C})$ respectively.

For any $\gamma \in \mathsf{GL}_3(\mathbb{C})$ with $||\gamma-\textup{I}_3||\leq \epsilon^6$, $||(\gamma h)^{\pm1}||\leq 3$. By the choice of $\kappa>1$ and $\epsilon>0$,  $\Omega_2$ is contained in $\mathcal{B}_{3, \epsilon^2}(x_1,V_2)$ and $\Omega_2^{-1}=\{g^{-1}:g\in \Omega_2\}$ is contained in $\mathcal{B}_{3,\epsilon^2}(x_2,V_1)$ (see (\ref{B(x,V)}) for the definition of the later sets). In addition, for any $A_{\pm}=\gamma \textup{diag}(\lambda_{1\pm},\lambda_{2\pm},\lambda_{3\pm})\gamma^{-1}$, where $||\gamma-\textup{I}_n||\leq \epsilon^6$ and $A_{\pm} \in \Omega_{1}^{\pm}$, and any $p\in \mathbb{N}$ we have the estimates:
\begin{align*} \frac{\big| (A_{+}^p he_1) \cdot h^{-t}e_1 \big|}{||A_{+}^p||}&\geq \frac{4|\lambda_{1+}^p|-|\lambda_{2+}^p|-|\lambda_{3+}^p|-10^4\epsilon |\lambda_{1+}^p|}{9|\lambda_{1+}^p|}\geq 10^{-2} \ \textup{since} \ |\lambda_{1+}|\geq |\lambda_{2+}|,\\
\frac{\big| (A_{+}^{-p} he_1) \cdot h^{-t}e_1 \big|}{||A_{+}^{-p}||}&\geq \frac{|\lambda_{3+}^{-p}|-4|\lambda_{1+}^{-p}|-|\lambda_{2+}^{-p}|-10^4\epsilon |\lambda_{3+}^{-p}|}{9|\lambda_{3+}^{-p}|}\geq 10^{-2},\\
 \frac{\big| (A_{-}^p he_3) \cdot h^{-t}e_3 \big|}{||A_{-}^p||}&\geq \frac{2|\lambda_{2-}^{p}|-2|\lambda_{3-}^{p}|-10^4\epsilon |\lambda_{2-}^{p}|}{9|\lambda_{2-}^{p}|}\geq 10^{-2} \ \textup{since} \ |\lambda_{2-}|\geq |\lambda_{1-}|,\\  
\frac{\big| (A_{-}^{-p} he_3) \cdot h^{-t}e_3 \big|}{||A_{-}^{-p}||}&\geq \frac{2|\lambda_{3-}^{-p}|-2|\lambda_{2-}^{-p}|-10^4\epsilon |\lambda_{3-}^{-p}|}{9|\lambda_{3-}^{-p}|}\geq 10^{-2}. \end{align*}

Thus, for any $p\in \mathbb{N}$ and $A_{\pm}\in \Omega_1^{\pm}$ we have the inclusions \begin{royalign}{0cm}\label{A'-item1}  A_{+}^{\pm p} B_{\epsilon}(x_1)&\subset \mathbb{P}(\mathbb{C}^3)\smallsetminus \mathcal{N}_{\epsilon}(\mathbb{P}(V_2)) \ \  &   \big|\big|A_{+}^{\pm p} \omega\big|\big|&\geq \sqrt[]{\lambda}^{p}||\omega|| & &\forall [\omega]\in B_{\epsilon}(x_1)\ \ \ \\ \label{A'-item2} A_{-}^{\pm p} B_{\epsilon}(x_2)&\subset \mathbb{P}(\mathbb{C}^3)\smallsetminus \mathcal{N}_{\epsilon}(\mathbb{P}(V_1)) \ \ & \big|\big|A_{-}^{\pm p} \omega\big|\big|&\geq \sqrt[]{\lambda}^{p}||\omega|| &\ \  &\forall [\omega]\in B_{\epsilon}(x_2).  \end{royalign}

For $\gamma \in \mathsf{GL}_3(\mathbb{C})$ with $||\gamma-\textup{I}_6||\leq \epsilon^6$, $$(h\gamma)\mathcal{N}_{\epsilon^3}(\mathbb{P}(e_3^{\perp}))\subset \mathcal{N}_{\epsilon}(\mathbb{P}(V_1)),\ (h\gamma)\mathcal{N}_{\epsilon^3}(\mathbb{P}(e_1^{\perp}))\subset \mathcal{N}_{\epsilon}(\mathbb{P}(V_2)),$$ hence for any $s\in \mathbb{N}$ and $B'\in \Omega_2$, \begin{royalign}{0cm}\label{B'-item1} (B')^s \big(\mathbb{P}(\mathbb{C}^3)&\smallsetminus \mathcal{N}_{\epsilon}(\mathbb{P}(V_2))\big)\subset B_{\epsilon}(x_1) & \big|\big|(B')^s \omega\big|\big|&\geq \sqrt[]{\kappa}^{s}||\omega|| & &\forall [\omega]\notin \mathcal{N}_{\epsilon}(\mathbb{P}(V_2)) \ \ \ \  \\ \label{B'-item2} (B')^{-s} \big(\mathbb{P}(\mathbb{C}^3)&\smallsetminus \mathcal{N}_{\epsilon}(\mathbb{P}(V_1))\big)\subset B_{\epsilon}(x_2) \ &   \big|\big|(B')^{-s} \omega\big|\big|&\geq \sqrt[]{\kappa}^{s}||\omega|| & \ \ &\forall [\omega]\notin \mathcal{N}_{\epsilon}(\mathbb{P}(V_1). \ \ \ \ \  \end{royalign}

By Lemma \ref{pingpong-lem}, for any $A'\in \Omega_1$ and $B'\in \Omega_2$, the semigroup $\langle A',(A')^{-1},B'\rangle$ is isomorphic to $\mathbb{Z}\ast \mathbb{Z}^{+}$ and quasi-isometrically embedded in $\mathsf{GL}_3(\mathbb{C})$, hence (\ref{Arch-sem2}) holds. In addition, as $A$ is not $1$-proximal and $A^{-1}$ is not $2$-proximal, the semigroup $\langle A,A^{-1},B\rangle$ is not Anosov in $\mathsf{GL}_3(\mathbb{C})$ and (\ref{Arch-sem1}) holds.
 Now fix $A'\in \Omega_1$ a $1$-biproximal matrix and $B'\in \Omega_2$. If $A'\in \Omega_1^{+}$, since $B'\in \mathcal{B}_{3, \epsilon^2}(x_1,V_2)$, by (\ref{A'-item1}),  (\ref{B'-item1}) and Corollary \ref{svg-cor}, it follows that the semigroup $\langle A',(A')^{-1},B'\rangle$ is $1$-Anosov. If $A'\in \Omega_1^{-}$, since $(B')^{-1}\in \mathcal{B}_{3, \epsilon^2}(x_2,V_1)$, by (\ref{A'-item2}), (\ref{B'-item2}) and Corollary \ref{svg-cor}, the semigroup $\langle A',(A')^{-1},(B')^{-1}\rangle$ is $1$-Anosov. This shows that the semigroup $\langle A',(A')^{-1},B'\rangle$ is $2$-Anosov and (\ref{Arch-sem3}) follows.\end{proof}

\begin{corollary}\label{lim-div} There exists a sequence of $1$-Anosov representations \hbox{$\{\rho_n:\mathsf{F}_2\rightarrow \mathsf{SL}_3(k)\}_{n\in \mathbb{N}}$} converging to a quasi-isometric embedding $\rho:\mathsf{F}_2\rightarrow \mathsf{SL}_3(k)$ whose image contains non-proximal matrices.\end{corollary}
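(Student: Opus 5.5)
The plan is to reuse the construction from the proof of Theorem \ref{Archimedean-sem}, but with the free group $\mathsf{F}_2=\langle a,b\rangle$ in place of the semigroup. We take $\rho(a)=A=\textup{diag}(\lambda,-\lambda,-\lambda^{-2})$ and $\rho(b)=B=h\,\textup{diag}(\kappa,1,\kappa^{-1})h^{-1}$, with $h,\lambda,\kappa,\epsilon$ as in that proof. The matrix $A$ is not $1$-proximal, so $\rho(\mathsf{F}_2)$ contains a non-proximal matrix; over $k=\mathbb{R}$ one can rescale to land in $\mathsf{SL}_3$, since $\det A=-1$. The approximating sequence will be
\[
\rho_n(a)=A_n:=\textup{diag}\big(\lambda+\tfrac1n,\,-\lambda,\,-(\lambda^2+\lambda/n)^{-1}\big),\qquad \rho_n(b)=B,
\]
after normalizing the determinant, which does not affect singular value gaps. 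For $n$ large, $A_n$ lies in $\Omega_1^{+}$, it is $1$-biproximal, and $A_n\to A$.

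First I show that $\rho$ is a quasi-isometric embedding. This time both $B$ and $B^{-1}$ are used, and we need a ping-pong for the group. From (\ref{A'-item1}) and (\ref{A'-item2}), together with (\ref{B'-item1}) and (\ref{B'-item2}), $A^{\pm p}$ sends $B_\epsilon(x_1)$ off $\mathcal{N}_\epsilon(\mathbb{P}(V_2))$ and $B_\epsilon(x_2)$ off $\mathcal{N}_\epsilon(\mathbb{P}(V_1))$, since $A$ lies in both $\Omega_1^{+}$ and $\Omega_1^{-}$. Meanwhile $B^{s}$ sends the complement of $\mathcal{N}_\epsilon(\mathbb{P}(V_2))$ into $B_\epsilon(x_1)$, and $B^{-s}$ sends the complement of $\mathcal{N}_\epsilon(\mathbb{P}(V_1))$ into $B_\epsilon(x_2)$.

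This is not literally the two-set setting of Lemma \ref{pingpong-lem}. The fix is to use the union $\mathcal{C}_1:=B_\epsilon(x_1)\cup B_\epsilon(x_2)$. I then need $A^{\pm p}\mathcal{C}_1$ to avoid both neighborhoods $\mathcal{N}_\epsilon(\mathbb{P}(V_1))$ and $\mathcal{N}_\epsilon(\mathbb{P}(V_2))$. For $A$ itself this is a direct computation, of the same kind as the four displayed estimates in the proof of Theorem \ref{Archimedean-sem}. Using $|\lambda_1|=|\lambda_2|$, the vectors $A^{\pm p}he_1$ and $A^{\pm p}he_3$ have nonzero pairings with both $h^{-t}e_1$ and $h^{-t}e_3$, up to signs depending on the parity of $p$. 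This pairing check is the step I expect to require the most care. Because only $\rho$ needs to be a quasi-isometric embedding, and not a whole neighborhood, I only check it for the exact matrix $A$. If some sign configuration cancels, I would perturb $h$ slightly to restore the inequalities. Lemma \ref{pingpong-lem}, with $\mathcal{C}_2$ the complement of $\mathcal{N}_\epsilon(\mathbb{P}(V_1))\cup\mathcal{N}_\epsilon(\mathbb{P}(V_2))$, then gives the quasi-isometric embedding. The norm growth comes from (\ref{A'-item1})–(\ref{B'-item2}).

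Next I show that each $\rho_n$ is $1$-Anosov, using the second part of Corollary \ref{svg-cor}. The hypotheses are that $A_n\in\mathcal{A}$ is $1$-biproximal, and that both $B$ and $B^{-1}$ lie in suitable sets $\mathcal{B}_{M/2,\epsilon^2}(x,V)$. The containments $\Omega_2\subset\mathcal{B}_{3,\epsilon^2}(x_1,V_2)$ and $\Omega_2^{-1}\subset\mathcal{B}_{3,\epsilon^2}(x_2,V_1)$ are already established. The difficulty is that Proposition \ref{svg-est} uses a single flag $(x,V)$, whereas $B$ and $B^{-1}$ use different flags. I would handle this by rerunning the proof of Proposition \ref{svg-est} with two flags. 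Claims \ref{claim-ball1} and \ref{claim-ball2} apply separately near $x_1$ and near $x_2$, and the required input is exactly the multi-ball ping-pong for $A_n$ verified in the previous step. That input is an open condition satisfied by $A$, so it persists for $A_n$ when $n$ is large. Claim \ref{claim-ball1} needs the points to have nonzero coordinates; this holds for $x_1=[he_1]$ and $x_2=[he_3]$, possibly after the perturbation of $h$.

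The inductive contraction argument then gives $\sigma_1/\sigma_2$ growing exponentially in word length, with rate depending on $\ell_1(A_n)/\ell_2(A_n)$. That ratio is positive for each fixed $n$ and tends to $1$ as $n\to\infty$. This is consistent with the limit $\rho$ failing to be Anosov while remaining a quasi-isometric embedding, and it proves the corollary.
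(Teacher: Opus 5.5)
Your first step is sound. The exact matrix $A$ lies in both $\Omega_1^{+}$ and $\Omega_1^{-}$, and the extra pairings $A^{\pm p}he_1\cdot h^{-t}e_3$ and $A^{\pm p}he_3\cdot h^{-t}e_1$ are bounded below by roughly $\frac14\|A^{\pm p}\|$ for every $p$. So Lemma \ref{pingpong-lem} with $\mathcal{C}_1=B_\epsilon(x_1)\cup B_\epsilon(x_2)$ shows that $\rho$ is a quasi-isometric embedding. Incidentally $\det A=+1$, so no rescaling is needed.

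The gap is in the Anosov step, and it cannot be repaired for your $\rho_n$. The multi-ball condition on $A_n$ quantifies over all $p\in\mathbb{Z}^{*}$, so it is not an open condition at $A$. In fact it fails for every $n$. Since $A_n$ is $1$-proximal with attracting line $[e_1]$, we have $A_n^{p}B_\epsilon(x_1)\to[e_1]$ as $p\to+\infty$. But $e_1=he_1-he_2$ lies in $\textup{span}(he_1,he_2)=(h^{-t}e_3)^{\perp}=V_1$.

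This is not only a defect of the method. $\textup{span}(he_1,he_2)$ is the attracting plane of $B$, so the attracting line of $\rho_n(a)$ lies in the attracting plane of $\rho_n(b)$. The boundary maps of a $1$-Anosov representation must be transverse at $(a^{+},b^{+})$, so $\rho_n$ is not $1$-Anosov. Since $\sigma_2/\sigma_3(g)=\sigma_1/\sigma_2(g^{-1})$, it is not $2$-Anosov either.

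You can also see this directly. Take an even $N$ with $((\lambda+\frac1n)/\lambda)^N\approx\kappa$. The upper-left $2\times 2$ block of $A_n^{N}B^{-1}$ then has negative discriminant, and the third row is negligible. Hence $A_n^{N}B^{-1}$ has two complex-conjugate eigenvalues of maximal modulus, so $\ell_1=\ell_2$ for the infinite-order element $a^Nb^{-1}$, contradicting Definition \ref{Anosov-def}.

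Putting $A_n$ in $\Omega_1^{-}$ does not help. Then $A_n^{p}he_1\cdot h^{-t}e_1=\lambda_1^p-\frac14\lambda_2^p+\frac14\lambda_3^p$ nearly vanishes when $|\lambda_2/\lambda_1|^{p}\approx 4$. For large $n$ the same block computation gives $\ell_1=\ell_2$ for some $A_n^{N}B$. The two flags of Theorem \ref{Archimedean-sem} work there only because $B^{-1}$ never occurs in the semigroup.

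The missing idea is the paper's. Use a single flag $(x,V)$ with $x\in\mathbb{P}(V)$, namely $x=[e_1+e_2+e_3]$ and $V=(2e_1-e_2-e_3)^{\perp}$. For this flag, $g^{p}B_{\epsilon}(x)$ avoids $\mathcal{N}_{\epsilon}(\mathbb{P}(V))$ uniformly over the whole family $\mathcal{A}=\{\textup{diag}(\lambda_1,\lambda_2,(\lambda_1\lambda_2)^{-1}):|\lambda_1|\geq|\lambda_2|\geq 2\}$. The weight $2$ on $e_1$ rules out cancellation for every ratio $|\lambda_1/\lambda_2|\geq 1$.

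Then replace $B$ by $\beta\,\textup{diag}(10^{r},1,10^{-r})\beta^{-1}$, with $[\beta e_1],[\beta e_3]$ near $x$ and $\mathbb{P}(\beta e_1^{\perp}),\mathbb{P}(\beta e_3^{\perp})$ near $\mathbb{P}(V)$. This matrix and its inverse then lie in the same $\mathcal{B}_{M,\epsilon}(x,V)$. The group case of Corollary \ref{svg-cor} applies verbatim to every $1$-biproximal $g\in\mathcal{A}$, and such $g$ converge to the non-proximal $\textup{diag}(\lambda,\lambda,\lambda^{-2})$. What you need is uniformity of the ping-pong over the approximating family, not openness at the limit.
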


\begin{proof} Let $\mathcal{A}:=\big\{\textup{diag}(\lambda_1, \lambda_2, \frac{1}{\lambda_1\lambda_2}): |\lambda_1|\geq |\lambda_2|\geq 2\big\}$ and \hbox{$(x,V):=\big([e_1+e_2+e_3], (2e_1-e_2-e_3)^{\perp}\big)$.} A computation shows that for any $g\in \mathcal{A}$ and $p\in \mathbb{Z}^{\ast}$, $\textup{dist}\big(g^p x,\mathbb{P}(V))\geq \frac{1}{2\sqrt{18}}$ and $g^pB_{10^{-4}}(x)\subset \mathbb{P}(k^3)\smallsetminus \mathcal{N}_{10^{-4}}(\mathbb{P}(V))$. Since $x\in \mathbb{P}(V)$, choose  $\beta\in \mathsf{GL}_3(k)$ such that $[\beta e_1],[\beta e_3]\in B_{10^{-6}}(x)$ and $\mathbb{P}(\beta e_1^{\perp})\cup \mathbb{P}(\beta e_3^{\perp})\subset \mathcal{N}_{10^{-5}}(\mathbb{P}(V))$. Thus, there is $r>1$, such that any non-trivial power of the matrix $\beta \textup{diag}(10^{r}, 1, 10^{-r})\beta^{-1}$ is in $ \mathcal{B}_{M, 10^{-8}}(x,V)$, $M=\max\{||\beta||,||\beta^{-1}||\}$. By applying Corollary \ref{svg-cor} for $\epsilon:=10^{-4}$, for any $g\in \mathcal{A}$, the group $\langle g,\beta \textup{diag}(10^{r}, 1, 10^{-r})\beta^{-1} \rangle$ is free and $1$-Anosov if and only if $g$ is $1$-biproximal.
It follows that for any $|\lambda|\geq 2$ the group $\Gamma_{\lambda}:=\big\langle \textup{diag}(\lambda, \lambda, \lambda^{-2}), \beta \textup{diag}(10^{r}, 1, 10^{-r})\beta^{-1}\big \rangle$ is free of rank $2$ and a limit of Anosov representations of $\mathsf{F}_2$ into $\mathsf{GL}_3(k)$. \end{proof}

\begin{rmk}\normalfont{ After sharing the above argument with Sami Douba, he pointed out to the author that, for $k= \mathbb{R}$, one can also deduce that the representations $\rho_n$ in the proof of Corollary \ref{lim-div} are Anosov using the combination theorem of Dey--Kapovich \cite{DK} for free products.}\end{rmk}

\section{Two examples in dimension 3}\label{Examples}
As an application of the proof of the Theorem \ref{nonArchimedean} and \ref{Archimedean}, we provide two examples of robust quasi-isometric embeddings with explicit open neighborhods which fail to contain Anosov representations into $\mathsf{SL}_3(k)$. Recall that for a matrix $h\in \mathsf{GL}_3(k)$, $C(h)=2||h||\cdot ||h^{-1}||$.

 \begin{Example}\label{Exmp1}  \normalfont{Let $k$ be a nonarchimedean local field and fix $\nu \in k$ with $|\nu|>{10^2}$. Define the representation $\rho:\mathsf{F}_2\rightarrow \mathsf{SL}_3(k)$, \begingroup
\addtolength{\jot}{0.5em} 
\begin{royalign*}{0.2cm}
\ \ \  \ \ \ \ \ \rho(a_1)&=\begin{pmatrix}
\nu  &  &   \\
& 1+\nu &   \\
 &  &\frac{1}{\nu+\nu^2}\end{pmatrix}   & \gamma&= b\begin{pmatrix} 1 & 0 & \nu^{7}\\  & 1 & 0\\  &  & 1
\end{pmatrix}b^{-1}  \\    
\rho(a_2)&=\gamma \begin{pmatrix} \nu^{150} &  & \\  & 1 & \\  &  & \nu^{-150}
\end{pmatrix}\gamma^{-1} & b&=\begin{pmatrix} 1 & 0 & 1\\ \nu & 1 & 1\\ -1-\nu & -1 & -1
\end{pmatrix}.
\end{royalign*}\endgroup The open neighborhood $\Omega$ of $\rho$, \begin{align}\label{open-exmp1}\Omega&:=\left\{\rho'\in \textup{Hom}\big(\mathsf{F}_2,\mathsf{GL}_3(k)\big): \begin{matrix}[1.2]  \big|\big|\rho'(a_1)-\rho(a_1)\big|\big|<|\nu|^{-31}\  \ \\ \big|\big|\rho'(a_2)-\rho(a_2)\big|\big|<|\nu|^{-190} \end{matrix} \right\}\end{align} consists entirely of quasi-isometric embeddings which are not Anosov.}\end{Example}

\begin{proof} Recall the notation in the proof of Theorem \ref{nonArchimedean} for $n=3$. Any non-trivial power of an element in the set $\Omega_{\frac{1}{|\nu|^{3}}}(\rho(a_1))$ satisfies Claim \ref{claim1} for $\epsilon:=|\nu|^{-3}$ and also note that $||b^{\pm 1}||\leq |\nu|,  C(b)\leq 2|\nu|^{2}, C(\gamma)\leq |\nu|^{19}.$  Since $\rho(a_2)^{+}=[\gamma e_1]$, $\rho(a_2)^{-}=[\gamma e_3]$, we have $$d_{\mathbb{P}}\big(\rho(a_2)^{\pm}, [be_1]\big)\leq 8C(b)\frac{||b^{-1}||}{|\nu|^{7}}<\frac{\epsilon}{5}, \ \textup{dist}(\mathbb{P}(V_{\rho(a_2)}^{\pm}),\mathbb{P}(be_3^{\perp}))\leq \frac{8C(b)||b||}{|\nu|^{6}}<\frac{\epsilon}{5},$$ hence $B_{\frac{\epsilon}{2}}(\rho(a_2)^{\pm})\subset B_{\epsilon}(y_3), \mathcal{N}_{\frac{\epsilon}{2}}(\mathbb{P}(V_{\rho(a_2)}^{\pm}))\subset \mathcal{N}_{\epsilon}(\mathbb{P}(V_3))$. In addition, $$\textup{dist}\big(\rho(a_2)^{\pm},\mathbb{P}(V_{\rho(a_2)}^{\mp})\big)\geq \min_{i=1,3}\big\{(||\gamma e_i||\cdot ||\gamma^{-t}e_{4-i}||)^{-1}\big\}\geq C(\gamma)^{-1}\geq |\nu|^{-19}.$$   The estimate of Lemma \ref{perturb1} for $\varepsilon:=|\nu|^{-19}$ and $\delta:=|\nu|^{-21}$, since $|\nu|^{150}>10^6 \frac{e^9C(\gamma)^4}{\varepsilon^{2}\delta}$, shows that for every $g'\in \big\{\rho(a_2)w\in \mathsf{GL}_3(k):||w^{\pm 1}-\textup{I}_3||<|\nu|^{-22}\big\}$ and $p\neq 0$, \begin{align*}(g')^p\mathcal{M}_{\varepsilon}(\rho(a_2))&\subset B_{\delta}(\rho(a_2)^{\pm})\\ (g')^p\big(\mathbb{P}(k^3)\smallsetminus \mathcal{N}_{\epsilon}(\mathbb{P}(be_3^{\perp}))\big)&\subset B_{\epsilon}([be_3])\end{align*} since $\mathbb{P}(k^3)\smallsetminus \mathcal{N}_{\epsilon}(\mathbb{P}(be_3^{\perp})) \subset \mathcal{M}_{\varepsilon}(\rho(a_2))$. As $||\rho(a_2)^{\pm 1}||\leq |\nu|^{168}$, it follows that $$\big\{g\in \mathsf{GL}_3(k):||\rho(a_2)-g||<|\nu|^{-190}\big\}\subset \big\{\rho(a_2)w\in \mathsf{GL}_3(k):||w^{\pm 1}-\textup{I}_3||<|\nu|^{-22}\big\}$$ and by Lemma \ref{perturb}, $\Omega_{\frac{1}{|\nu|^{3}}}(\rho(a_1))$ contains $\big\{g\in \mathsf{GL}_3(k): ||g-\rho(a_1)||<|\nu|^{-31}\big\}.$ Therefore, any representation of $\mathsf{F}_2$ in $\Omega$ in (\ref{open-exmp1}) is obtained as in the proof of Theorem \ref{nonArchimedean}.\end{proof}

\begin{Example}\label{Exmp2} \normalfont{Fix the presentation $$(\mathbb{Z}\times \mathsf{D}_8)\ast \mathbb{Z}=\big \langle z,t,s_1, s_2\ \big| \ s_1^4, s_2^2, (s_1s_2)^2, ts_1t^{-1}s_1^{-1}, ts_2t^{-1}s_2^{-1}  \big\rangle$$ and define the representation $\psi:(\mathbb{Z}\times \mathsf{D}_8)\ast \mathbb{Z}\rightarrow \mathsf{GL}_3\big(\mathbb{Q}\big)$,\begingroup
\addtolength{\jot}{0.5em}
 \begin{royalign*}{0.7cm}
\ \ \ \ \ \ \ \  \ \ \ \ \  \psi(s_1)&=\begin{pmatrix}
0 & 1 &   \\
- 1& 0 &   \\
 &  &1    \\
\end{pmatrix} & \psi(t)&=\begin{pmatrix}
10^5 & &    \\
 & 10^5&    \\
 &  & 10^{-10} 
\end{pmatrix}\\ \psi(s_2)&=\begin{pmatrix}
1 & 0 &    \\
 0& -1 &    \\
 &  &1   \\
\end{pmatrix} & \psi(z)&=\beta \begin{pmatrix} 10^{130} &  & \\  & 1 & \\  &  & 10^{-130}
\end{pmatrix}\beta^{-1}\\  h&=\begin{pmatrix} 1 & 0 & 1\\ 2 & 1 &1\\ 1 & -1 & 3
\end{pmatrix}  & \beta&= h\begin{pmatrix} 1 & 0 & 10^{7}\\  & 1 & 0\\  &  & 1\end{pmatrix}h^{-1} 
\end{royalign*} \endgroup  The open neighborhood $\Psi$ of $\psi$, \begin{align*}\Psi&:=\left\{\psi'\in \textup{Hom}\big((\mathbb{Z}\times \mathsf{D}_8)\ast \mathbb{Z},\mathsf{GL}_3(\mathbb{C})\big): \begin{matrix}[1.23]  \psi'(s_1)=w\psi(s_1)w^{-1}\\ 
\psi'(s_2)=w\psi(s_2)w^{-1}\\ 
 \end{matrix}\  \begin{matrix}[1.23] \ \ \ \ \ \ \ \ \big|\big|w-\textup{I}_3\big|\big|<10^{-12}\\
  \big|\big|\psi'(t)-\psi'(t)\big|\big|<10^{-12}  \\
 \big|\big|\psi'(z)-\psi'(z)\big|\big|<10^{-167}  \\ 
\end{matrix} \right\}   \end{align*} consists entirely of quasi-isometric embeddings which are not Anosov.}\end{Example}
\begin{proof} We are applying the proof of Theorem \ref{Archimedean} for the dihedral group of order 8, $\mathsf{D}_8= \langle s_1,s_2 \ | \ s_1^4,s_2^2, (s_1s_2)^2 \rangle$ and the irreducible representation $\psi_0:\mathsf{D}_8\rightarrow \mathsf{GL}_2(\mathbb{C})$, $$\psi_0(s_1)=\begin{pmatrix} 0 & 1 \\ -1 & 0\end{pmatrix}, \ \psi_0(s_2)=\begin{pmatrix} 1 &  0\\ 0 & -1\end{pmatrix}.$$ 

Recall the notation in the proof of Theorem \ref{Archimedean} for $d=2$, $n=3$, $\lambda=10^5$. Given $\psi_0$, we can take $v_1=\frac{e_1+2e_2}{\sqrt{5}}$ and $v_2=\frac{3e_1-e_2}{\sqrt{10}}$, $\eta=\frac{1}{10}$ so that (\ref{v1-ineq}) holds for $\psi_0$. Choose $\theta=10^{-3}$ and $\theta'=10^{-12}$ and let $\Psi_{10^{-12}}$ be the open neighborhood  of $\psi|_{\mathbb{Z}\times \mathsf{D}_8}$ defined in Lemma \ref{claim3}.
Note that $||h^{\pm 1}||\leq 4$, $C(h)\leq 2^5$, $||\beta^{\pm 1}||\leq 2^4 10^{7}$, $C(\beta)\leq 2^9 10^{14}$. A direct calculation shows that $d_{\mathbb{P}}\big(\psi(z)^{\pm}, [he_1]\big)<\frac{\theta}{2}$ and $\textup{dist}\big(\mathbb{P}(V_{\psi(z)}^{\pm}),\mathbb{P}(he_3^{\perp})\big)<\frac{\theta}{2}$, hence $B_{\frac{\theta}{2}}(\psi(z)^{\pm})\subset B_{\theta}(y)$, $\mathcal{N}_{\frac{\theta}{2}}(\mathbb{P}(V_{\psi(z)}^{\pm}))\subset \mathcal{N}_{\theta}(\mathbb{P}(he_3^{\perp}))$. Moreover, observe that $$\textup{dist}\big(\psi(z)^{\pm},\mathbb{P}(V_{\psi(z)}^{\mp})\big)\geq \min_{i=1,3}(||\beta e_i||\cdot ||\beta^{-t}e_{4-i}||)^{-1} \geq C(\beta)^{-1}.$$ By applying Lemma \ref{perturb1} for $\varepsilon:=\frac{1}{2^{9}10^{14}}$ and $\delta:=\frac{1}{2^{9}10^{16}}$, since $10^{130}>10^6 \frac{e^9C(\beta)^4}{\varepsilon^2 \delta}$ and $||\psi(z)^{\pm 1}||\leq 10^{147}$, for any matrix $g$ in $\big\{g\in \mathsf{GL}_3(\mathbb{C}):||\psi(z)-g||<10^{-167}\big\}$ and $p\neq 0$, $g^p\mathcal{M}_{\varepsilon}(g)\subset B_{\delta}(g^{\pm})$. In particular, since $\mathbb{P}(\mathbb{C}^3)\smallsetminus  \mathcal{N}_{\theta}(\mathbb{P}(he_3^{\perp}))\subset \mathcal{M}_{\varepsilon}(g)$, it follows that $g^p(\mathbb{P}(\mathbb{C}^3)\smallsetminus \mathcal{N}_{\theta}(\mathbb{P}(he_3^{\perp})))\subset B_{\theta}([he_1])$.  By Claim \ref{claim4}, for any $\psi'\in \Psi_{10^{-12}}$ and $c \in \mathbb{Z}\times \mathsf{D}_8$, non-trivial, $$\psi'(c)B_{\theta}([he_1])\subset \mathbb{P}(\mathbb{C}^3)\smallsetminus \mathcal{N}_{\theta}(\mathbb{P}(he_3^{\perp})),$$ thus any representation of $(\mathbb{Z}\times \mathsf{D}_8)\ast \mathbb{Z}$ in $\Psi_{10^{-12}}\times \big\{g\in \mathsf{GL}_3(\mathbb{C}):||\psi(z)-g||<10^{-167}\big\}$ is constructed as in the proof of Theorem \ref{Archimedean}.\end{proof}

\begin{appendix}\section{Technical Lemmas}
In this appendix, we obtain certain technical statements that we use throughout the paper.  Recall that $k$ is a local field and $|\cdot|$ is the absolute value on $k$. If $k$ is nonarchimedean, let $\textup{val}:k\rightarrow \mathbb{Z}\cup\{\infty\}$ be the discrete valuation on $k$ such that for any $x\in k$,  $|x|=e^{-\textup{val}(x)}$.  \begin{lemma}\label{dist} Let $h\in \mathsf{GL}_n(k)$ and set $C(h):=2||h||\cdot ||h^{-1}||$. The following estimates hold: \begin{enumerate}
\item \label{dist-item1}$d_{\mathbb{P}}(hx,x)\leq 2||h^{-1}|| \cdot ||h-\textup{I}_n||$ for every $x\in \mathbb{P}(k^n)$;
\item \label{dist-item2} $d_{\mathbb{P}}(hx,hy)\leq C(h)d_{\mathbb{P}}(x,y)$ for every $x,y\in \mathbb{P}(k^n)$.\end{enumerate}

\noindent In particular, for every $x\in \mathbb{P}(k^n)$, $W\in \mathsf{Gr}_{n-1}(k)$ and $\epsilon>0$, $$hB_{\epsilon}(x)\subset B_{\epsilon'}(x), \ h\mathcal{N}_{\epsilon}(\mathbb{P}(W))\subset \mathcal{N}_{\epsilon''}(\mathbb{P}(W))$$ where $\epsilon'=C(h)\epsilon+2||h^{-1}||\cdot ||h-\textup{I}_n||$ and $\epsilon''=C(h)\epsilon+2||h||\cdot ||h^{-1}-\textup{I}_n||$.
\end{lemma}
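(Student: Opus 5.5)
We prove both items directly from the definition of $d_{\mathbb{P}}$ and then derive the two inclusions as formal consequences. Work with representatives $u,v$ of norm one. Since $d_{\mathbb{P}}([u],[v])=\min_{|\zeta|=1}\|\zeta u-v\|$, it suffices in each case to produce one unimodular $\zeta$ and bound $\|\zeta u' - v'\|$ for suitably normalized $u',v'$.

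For (i), let $x=[u]$ with $\|u\|=1$ and set $w:=hu/\|hu\|$. We compare $w$ with $u$ through the decomposition
\[
hu-\|hu\|u=(h-\textup{I}_n)u+(1-\|hu\|)u .
\]
Since $\big|\,\|hu\|-1\big|\le \|hu-u\|\le \|h-\textup{I}_n\|$ (the ultrametric case gives the same bound), we get $\|hu-\|hu\|u\|\le 2\|h-\textup{I}_n\|$. Dividing by $\|hu\|$ and using $\|hu\|\ge \|h^{-1}\|^{-1}$ yields
\[
d_{\mathbb{P}}(hx,x)\le \|w-u\|\le 2\|h^{-1}\|\,\|h-\textup{I}_n\|.
\]

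For (ii), choose unit representatives $x=[u]$ and $y=[v]$ with $|\zeta|=1$ realizing $\|\zeta u-v\|=d_{\mathbb{P}}(x,y)$. The unit vectors $hu/\|hu\|$ and $hv/\|hv\|$ represent $hx$ and $hy$, and we estimate
\[
\Big\|\zeta\frac{hu}{\|hu\|}-\frac{hv}{\|hv\|}\Big\|\le \frac{\|h(\zeta u-v)\|}{\|hu\|}+\|hv\|\,\Big|\frac{1}{\|hu\|}-\frac{1}{\|hv\|}\Big|.
\]
The first term is at most $\|h\|\,\|h^{-1}\|\,d_{\mathbb{P}}(x,y)$. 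For the second, $\big|\|hv\|-\|hu\|\big|=\big|\|hv\|-\|\zeta hu\|\big|\le\|h(\zeta u-v)\|$, so it is bounded by $\|h(\zeta u - v)\|/\|hu\|$, which is again at most $\|h\|\,\|h^{-1}\|\,d_{\mathbb{P}}(x,y)$. Summing the two terms gives the constant $C(h)=2\|h\|\,\|h^{-1}\|$.

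For the inclusions, take $z\in B_\epsilon(x)$. By (ii) and (i),
\[
d_{\mathbb{P}}(hz,x)\le d_{\mathbb{P}}(hz,hx)+d_{\mathbb{P}}(hx,x)\le C(h)\epsilon+2\|h^{-1}\|\,\|h-\textup{I}_n\|=\epsilon' .
\]
For the neighbourhood of $\mathbb{P}(W)$, take $z\in\mathcal{N}_\epsilon(\mathbb{P}(W))$ and $z'\in\mathbb{P}(W)$ with $d_{\mathbb{P}}(z,z')\le\epsilon$. Then $d_{\mathbb{P}}(hz,hz')\le C(h)\epsilon$ by (ii). It remains to bound the distance from $hz'$ to $\mathbb{P}(W)$: apply (i) to $h^{-1}$ at the point $hz'$, which gives $d_{\mathbb{P}}(z',hz')=d_{\mathbb{P}}(h^{-1}(hz'),hz')\le 2\|h\|\,\|h^{-1}-\textup{I}_n\|$. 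Since $z'\in\mathbb{P}(W)$, the triangle inequality yields the radius $\epsilon''$.

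The only delicate point is the norm-comparison step in (ii), which requires keeping the unimodular factor $\zeta$ inside the norm consistently. The argument uses only the triangle inequality, so it applies verbatim in the ultrametric $\ell_\infty$ setting.
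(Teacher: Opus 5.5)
Your argument for (ii) is the paper's own: the same splitting of $\zeta hu/\|hu\|-hv/\|hv\|$, and you track the unimodular $\zeta$ more carefully than the paper does. For (i) the paper only quotes \cite[Lem.~2.2]{Tso-robust}, and it states the two inclusions without proof. Your direct proof of (i) and your triangle-inequality derivation of the inclusions are correct; applying (i) to $h^{-1}$ at $hz'$ is exactly what yields the constant in $\epsilon''$. For $k=\mathbb{R},\mathbb{C}$ nothing is missing.

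Your closing claim that the argument ``applies verbatim'' in the ultrametric setting is not right, though the paper's proof of (ii) has the same imprecision. For nonarchimedean $k$, $\|hu\|$ is a positive real number, not an element of $k$, so $hu/\|hu\|$ and $(1-\|hu\|)u$ are not vectors in $k^n$. You have to rescale by some $c\in k$ with $|c|=\|hu\|$, for instance a coordinate of $hu$ of maximal modulus. Then $|1-c|$, resp.\ $|c_u-c_v|$, is no longer bounded by $\bigl|\|hu\|-1\bigr|$, resp.\ $\bigl|\|hu\|-\|hv\|\bigr|$.

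The fix uses that $d_{\mathbb{P}}\le 1$ when $k$ is nonarchimedean.

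For (ii), split into two cases. If $\|h(\zeta u-v)\|<\|hu\|$, the strong triangle inequality forces $\|hv\|=\|hu\|$. You may then divide $\zeta hu$ and $hv$ by the same $c$, which gives $d_{\mathbb{P}}(hx,hy)\le\|h(\zeta u-v)\|/\|hu\|\le\|h\|\,\|h^{-1}\|\,d_{\mathbb{P}}(x,y)$. Otherwise $\|h\|\,\|h^{-1}\|\,d_{\mathbb{P}}(x,y)\ge\|h(\zeta u-v)\|/\|hu\|\ge 1\ge d_{\mathbb{P}}(hx,hy)$.

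For (i), again split into two cases. If $\|(h-\textup{I}_n)u\|<1$, then $\|hu\|=1$ and no rescaling is needed. In that case $d_{\mathbb{P}}(hx,x)\le\|h-\textup{I}_n\|\le\|h^{-1}\|\,\|h-\textup{I}_n\|$, because $1=\|u\|\le\|h^{-1}\|\,\|hu\|$. Otherwise $\|hu\|\le\|(h-\textup{I}_n)u\|\le\|h-\textup{I}_n\|$, whence $\|h^{-1}\|\,\|h-\textup{I}_n\|\ge\|h-\textup{I}_n\|/\|hu\|\ge 1\ge d_{\mathbb{P}}(hx,x)$.
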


\begin{proof} The estimate in (\ref{dist-item1}) is the content of \cite[Lem. 2.2]{Tso-robust}.

\par For (\ref{dist-item2}), let $u,v\in k^n$ be unit vectors with $x=[u], y=[v]$. The triangle inequality implies \begin{align*}d_{\mathbb{P}}\big([hu],[hv]\big)&\leq \Bigg| \Bigg| \frac{hu}{||hu||}-\frac{hv}{ ||hv||}\Bigg|\Bigg|=\Bigg| \Bigg| \frac{h(u-v)}{||hu||}+\frac{||hu||-||hv||}{||hu||\cdot ||hv||}hv\Bigg|\Bigg|\\ &\leq ||h||\cdot ||h^{-1}||\cdot ||u-v||+\frac{1}{||hu||}\big|||hu||-||hv||\big|\\ &\leq \big(2||h||\cdot ||h^{-1}||\big)||u-v||.\qedhere  \end{align*} \end{proof}
\begin{proof}[Proof of Lemma \ref{Lip}] We prove the statement for the action of $g^p$ on $\mathcal{M}_{\theta}^{-}(g)$, $p\in \mathbb{N}$. The proof in the case of the action of $g^{-p}$ on $\mathcal{M}_{\theta}^{+}(g)$ is analogous.

Let $[\omega_1],[\omega_2]\in \mathcal{M}_{\theta}^{-}(g)$, $||\omega_1||=||\omega_2||=1$. By Lemma \ref{dist} (ii) we have $$C(h)\textup{dist}([h^{-1}\omega_i],e_{1}^{\perp})\geq \textup{dist}([\omega_i],he_{1}^{\perp})\geq \theta,$$ hence write $\frac{h^{-1}\omega_i}{||h^{-1}\omega_i||}=(x_i,v_i)\in k\oplus k^{n-1}$, $|x_i|\geq \theta  C(h)^{-1}$ for $i=1,2$. By letting $D_0:=\textup{diag}(\kappa_1,A_g,\kappa_n)$ and $D_1:=\kappa_1^{-1}\textup{diag}(A_g, \kappa_n)$, we obtain the estimates \begin{align*}d_{\mathbb{P}}\big([D_0^ph^{-1}\omega_1],[D_0^ph^{-1}\omega_2]\big)&\leq \Bigg| \Bigg| \frac{(1,x_1^{-1}D_1^pv_1)}{||(1,x_1^{-1}D_1^p v_1)||}- \frac{(1,x_2^{-1}D_1^pv_2)}{||(1,x_2^{-1}D_1^pv_2)||} \Bigg| \Bigg|\\ & \leq \frac{||D_1^p||\cdot \big|\big|x_1^{-1}v_1-x_2^{-1}v_2\big|\big|}{||(1,x_1^{-1}D_1^pv_1)||} +\frac{\big| ||(1,x_1^{-1}D_1^pv_1)||-||(1,x_2^{-1}D_1^pv_2)|| \big|}{||(1,x_1^{-1}D_1^pv_1)||}\\  & \leq 2||D_1||^p \big|\big|x_1^{-1}v_1-x_2^{-1}v_2\big|\big|\\ & \leq 2||D_1||^p \theta^{-2} C(h)^{2}\big(|x_1-x_2|+| |v_1-v_2| |\big)\\ & \leq 4 ||D_1||^p \theta^{-2} C(h)^2 d_{\mathbb{P}}\big([h^{-1}\omega_1],[h^{-1}\omega_2]\big)\\ & \leq 4 ||D_1||^p \theta^{-2} C(h)^3 d_{\mathbb{P}}\big([\omega_1],[\omega_2]\big). \end{align*} In particular, as $||D_1||\leq |\kappa_1^{-1}| \cdot ||A_g||$, we deduce that $$d_{\mathbb{P}}\big(hD_0^ph^{-1}[\omega_1],hD_0^ph^{-1}[\omega_2]\big)\leq \frac{4||A_g||^p}{ \theta^{2} |\kappa_1|^p} C(h)^4 d_{\mathbb{P}}\big([\omega_1],[\omega_2]\big).$$

Thus, the action of $g^p$ acting on $\mathcal{M}_{\theta}^{-}(g)$ is $4C(h)^4\theta^{-2}\Big(\frac{ ||A_g||}{|\kappa_1|}\Big)^p$-Lipschitz.
To see the bound from (\ref{norm-power}), for $[v]\in \mathcal{M}_{\theta}^{-}(g)$ write $\frac{h^{-1}v}{||h^{-1}v||}=(x_0,v')$, \hbox{$|x_0|\geq \theta C(h)^{-1}$, hence} \begin{align*}| | g^p v| |\geq \frac{||D_0^p(h^{-1}v)||}{| |h^{-1}| |}&\geq  |\kappa_1|^p |x_0|\frac{||h^{-1}v||}{| |h^{-1} | |}\geq  \frac{2\theta|\kappa_1|^p }{C(h)^2} | | v| |.\qedhere\end{align*} \end{proof}

\subsection{Perturbations of nonarchimedean matrices} In this subsection, we assume that $k$ is a nonarchimedean local field. For $A\in \mathsf{GL}_n(k)$ its norm is $||A||=\max_{i}|a_{ij}|$ and denote by $\chi_A(t)=\textup{det}(A-t\textup{I}_n)$ the characteristic polynomial of $A$. For two polynomials $f,g$, written in the form $f(t)=a_0+\cdots+a_nt^n$, $g(t)=b_0+\cdots+b_nt^n$, for some $n\geq 0$, define $||f-g||:=\max_j |a_j-b_j|$ and $\textup{val}(f-g):=\min_{j}\textup{val}(a_j-b_j)$. 

By using \cite[Thm. 32.20]{Warner} (see also \cite[Thm. 36]{continuity-roots}) and the estimate provided therein, we obtain the following lemma providing explicit open neighborhoods of diagonalizable matrices with distinct eigenvalues.

\begin{lemma}\label{perturb} Let $C=\textup{diag}(a_1,\ldots,a_n)\in \mathsf{SL}_n(k)$, $n\geq 2$, be a diagonal matrix with $a_i\neq a_j$ for $i\neq j$. Fix $\epsilon\in (0,1)$ with $0<\epsilon<\frac{1}{2} \underset{i\neq j}{\min}|a_i-a_j|$ and let  \begin{align}\label{constant-0}\theta(n,\epsilon, C):=\min \Big\{10^{-2}\underset{i\neq j}{\min}|a_i-a_j|, \epsilon^n ||C||^{1-2n^2-n}\Big\}.\end{align} For every $C'\in \mathsf{GL}_n(k)$ with $||C-C'||<\theta(n,\epsilon, C)$, there are $w\in \mathsf{GL}_n(k)$, $a_1',\ldots,a_n'\in k$ with $$C'= w\textup{diag}\big( a_1', \ldots, a_n'\big)w^{-1},$$  $||w^{\pm 1}-\textup{I}_n| |<\epsilon $  and $\max_{j}|a_j'-a_j|<\epsilon$.\end{lemma}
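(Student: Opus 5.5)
The plan is to deduce Lemma \ref{perturb} from continuity of roots of polynomials over $k$ (\cite[Thm. 32.20]{Warner}) together with a direct construction of eigenvectors, using the ultrametric inequality (\ref{ultrametric}) throughout. I would proceed in three steps: eigenvalues, then eigenvectors, then the inverse of the conjugating matrix.

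\textbf{Step 1 (eigenvalues).} Compare characteristic polynomials. Each coefficient of $\chi_{C'}-\chi_C$ is a sum of products of entries in which at least one factor is an entry of $C'-C$. By (\ref{ultrametric}) and the hypothesis $||C'-C||<\theta\leq 10^{-2}$, this gives
\begin{align*}
||\chi_{C'}-\chi_C||\leq ||C||^{n-1}\,||C-C'||.
\end{align*}
Here $||C||\geq 1$ because $\det C=1$ forces some $|a_i|\geq 1$, so no correction factor is needed.

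The root-continuity estimate of Warner then says: for each root $a_j$ there is exactly one root $a_j'$ of $\chi_{C'}$ in the disc $|t-a_j|<\epsilon$. It applies provided $||\chi_{C'}-\chi_C||$ is below roughly $\epsilon^n$ times a power of $\max(1,||C||)$. The discs are pairwise disjoint since $\epsilon<\frac12\min|a_i-a_j|$. The exponent $1-2n^2-n$ in $\theta$ is chosen so that this smallness holds with room to spare for Step 2. In particular the $a_j'$ are distinct, so $C'$ is diagonalizable.

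\textbf{Step 2 (eigenvectors).} Normalize the eigenvector $u_j$ of $C'$ for $a_j'$ so that its $j$-th coordinate is $1$. I would obtain it by solving $(C'-a_j')u=0$ with $u_j=1$. Deleting the $j$-th row and column leaves the system $(M_j)\,\tilde u=-c_j$, where $M_j=\mathrm{diag}(a_i-a_j')_{i\neq j}+E$ with $||E||<\theta$, and $c_j$ is the $j$-th column of $C'-C$ with its $j$-th entry removed, so $||c_j||<\theta$.

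Since $|a_i-a_j'|=|a_i-a_j|>2\epsilon>\theta$ by the ultrametric inequality, the diagonal part dominates $E$. Hence $M_j$ is invertible with $||M_j^{-1}||\leq (\min_{i\neq j}|a_i-a_j|)^{-1}$, and
\begin{align*}
||\tilde u||\leq \frac{\theta}{\min_{i\neq j}|a_i-a_j|}.
\end{align*}
One must still check that the dropped $j$-th equation holds automatically. This follows because $a_j'$ is a root of $\chi_{C'}$, so $\ker(C'-a_j')\neq 0$, and that kernel must have nonzero $j$-th coordinate by the same dominance argument.

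\textbf{Step 3 (the matrix $w$).} Let $w$ be the matrix with columns $u_j$. Then $||w-\mathrm{I}_n||<\epsilon$, because $\theta\leq 10^{-2}\min|a_i-a_j|$ makes $||\tilde u||\leq 10^{-2}<\epsilon$ once $\theta$ is also below $\epsilon\min|a_i-a_j|$. The inverse is handled by the Neumann series:
\begin{align*}
||w^{-1}-\mathrm{I}_n||\leq ||w-\mathrm{I}_n||,
\end{align*}
by ultrametricity, since $||w-\mathrm{I}_n||<1$.

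\textbf{Main obstacle.} The delicate point is Step 1: matching Warner's explicit constant to $\theta(n,\epsilon,C)$, i.e. tracking how the root-separation bound depends on $\epsilon^n$ and on powers of $||C||$, which sets the exponent $1-2n^2-n$. If that bound proves insufficient in Step 3, I would shrink the disc radius to $\epsilon/\max|a_i|$ and absorb the loss into $||C||^{-2n^2}$.
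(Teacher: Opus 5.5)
Your proposal is correct. Step 1 is the same as the paper's: the bound $\|\chi_C-\chi_{C'}\|\le\|C-C'\|\,\|C\|^{n-1}$ followed by nonarchimedean continuity of roots. Steps 2--3 take a different route.

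\textbf{The paper's route.} It takes an arbitrary eigenbasis $h$ of $C'$, normalized by $\|h\|=1$. It writes $h=D_h+X_h$ as a diagonal plus off-diagonal part and reads off $|(X_h)_{ij}|\,|a_i-a_j|\le 4\theta$ from the off-diagonal entries of $Ch-hD=(C-C')h$. It then rescales by the inverse of the diagonal part, after patching vanishing diagonal entries with a small diagonal $D_h'$.

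\textbf{Your route.} You produce each eigenvector with $j$-th coordinate equal to $1$ by inverting the diagonally dominant matrix $M_j$. This gives column-by-column control. Invertibility of $M_j$ shows at once that every eigenvector for $a_j'$ has nonzero $j$-th coordinate, and that after this normalization its other entries are $<\theta/\min_{i\neq j}|a_i-a_j|$.

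\textbf{What each buys.} The paper's global normalization $\|h\|=1$ gives no lower bound on the individual diagonal entries of $h$. So its bound $\|(D_h+D_h')^{-1}\|\le 4$ does not follow as written, and $h'=h+D_h'$ would not satisfy $C'=h'D(h')^{-1}$. Normalizing each column separately, as you effectively do, repairs this, and then $D_h'=0$. The commutator estimate is quicker once that normalization is in place. Your argument is self-contained and yields the sharper bound $\|w^{\pm1}-\mathrm{I}_n\|<\theta/\min_{i\neq j}|a_i-a_j|$, using your Neumann-series step for $w^{-1}$.

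\textbf{Slips to fix.}
\begin{enumerate}
\item $\theta\le 10^{-2}$ need not hold. Only $\theta\le\epsilon^n<1$ holds, and that is all you use.
\item In Step 3, $\|\tilde u\|\le 10^{-2}<\epsilon$ fails for small $\epsilon$. The correct justification is $\theta\le\epsilon^n\|C\|^{1-2n^2-n}\le\epsilon^2<\epsilon\cdot\frac12\min_{i\neq j}|a_i-a_j|$. This uses $\|C\|\ge 1$, $n\ge 2$ and the hypothesis on $\epsilon$.
\item You should state that $a_j'\in k$, so that $w\in\mathsf{GL}_n(k)$.
\end{enumerate}

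\textbf{The ``main obstacle'' is not one.} The given $\theta$ already suffices.
\begin{enumerate}
\item Take a root $b$ of $\chi_{C'}$ in $\bar k$. Then $|b|\le\|C'\|\le\|C\|$, so $\prod_i|b-a_i|=|\chi_C(b)-\chi_{C'}(b)|<\theta\|C\|^{2n-1}\le\epsilon^n$. Hence $b$ lies within $\epsilon$ of some $a_i$.
\item Likewise $\prod_b|a_i-b|=|\chi_{C'}(a_i)|<\epsilon^n$. If no root lay within $\epsilon$ of $a_i$, every factor would exceed $2\epsilon$, a contradiction. So every disc contains a root, and by disjointness exactly one.
\item That root is fixed by $\mathrm{Aut}(\bar k/k)$ and is separable, so it lies in $k$.
\end{enumerate}
No shrinking of the discs is needed.
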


\begin{proof} Write $C=(c_{ij})_{i,j=1}^{n}$, $C'=(c_{ij}')_{i,j=1}^{n}$ and $\chi_{C}(t)=\prod_i (t-a_i)=\sum_{i}A_it^i$.  \hbox{Since $||C||\geq 1$,} $||C'||\geq \frac{1}{2}$ and $||C-C'||<\theta(n,\epsilon,C)$, by the ultrametric property, it follows that $||C'||=||C'||$. For any collection of pairs $\{(i_1,j_1),\ldots, (i_r,j_r)\}$, $r\leq n$,  write $$c_{i_1j_1}\cdots c_{i_rj_r}-c_{i_1j_1}'\cdots c_{i_rj_r}'=(c_{i_1j_1}-c_{i_1j_1}')c_{i_2j_2}\cdots c_{i_rj_r}+\cdots +c_{i_1j_1}'\cdots c_{i_{r-1}j_{r-1}}'(c_{i_rj_r}-c_{i_rj_r}')$$ hence, by using the triangle inequality and the ultrametric property, it follows that $$\max_{1\leq r\leq n} \big|c_{i_1j_1}\cdots c_{i_rj_r}-c_{i_1j_1}'\cdots c_{i_rj_r}'\big|\leq ||C-C'|| \cdot ||C||^{n-1}$$ and $|c_{i_1j_1}\cdots c_{i_rj_r}|\leq ||C||^r$. Therefore, we obtain the estimates \begin{align}\label{char-poly}||\chi_{C}-\chi_{C'}|| \leq ||C-C'||\cdot ||C||^{n-1},\end{align} $||\chi_{C}||=\max_{j}|A_j|\leq ||C||^n$, $-\min_{j}\textup{val}(A_j)\leq \log||C||^n$. Since $||C-C'||\leq \theta(n,\epsilon,C)$, by (\ref{char-poly}), \begin{align*}\label{diagonal-ineq}\textup{val}\big(\chi_{C}-\chi_{C'}\big)&> n\left(-\log \epsilon+2\log ||C||^n\right)\geq  n \big(-\log \epsilon-\min_{0\leq j\leq n}\textup{val}(A_j)+\max\{-\min_{0\leq j\leq n}\textup{val}(A_j),0\}\big).\end{align*} Thus, as $t-a_1,\ldots,t-a_n\in k[t]$ are distinct, irreducible and  \hbox{$-\log \epsilon>\max \big\{0, \underset{i\neq j}{\max}\ \textup{val}(a_i-a_j)\big\}$,} by \cite[Thm. 36 \& Thm. 16]{continuity-roots} for $\varepsilon:=-\log \epsilon$ (and the estimate provided therein), for any $j\in \{1,\ldots,n\}$ there is a monic polynomial $t-a_j'\in k[t]$ such that $\textup{val}(a_j-a_{j}')>-\log\epsilon$ and $\chi_{C'}(t)=\prod_{j=1}^{n}(t-a_j')$. Since for every $j$, $|a_j-a_j'|\leq \epsilon \leq \frac{1}{10}\underset{i\neq j}{\min}|a_i-a_j|$, $\chi_{C'}$ has all of its roots distinct. In particular, $C'$ admits $n$ linearly independent eigenvectors and there is $h\in \mathsf{GL}_n(k)$ with $||h||=1$, $$C'=hDh^{-1}, \ D:=\textup{diag}\big(a_{1}',\ldots,a_{n}'\big).$$ Let us write $h=D_{h}+X_{h}$, where $D_{h}$ is the diagonal matrix formed by the diagonal entries of $h$ and $X_h$ has its diagonal entries equal to zero. Let $D_{h}'$ be the $n\times n$ diagonal matrix whose $(i,i)$ entry is equal to $0$ if $(D_{h})_{ii}\neq 0$, otherwise take $(D_{h}')_{ii}$ to be non-zero and of modulus less or equal to $\theta(n, \epsilon,C)$. By the choice of $D_{h}'$, note that $D_{h}+D_{h}'$ is invertible and set $h':=h+D_{h}'=(D_{h}+D_{h}')+X_{h}$, so that $\frac{1}{2}\leq ||h'||\leq 2$.

By using the triangle inequality we have that \begin{align*} | | C h'- h'D | |&=| | C h- hD+(C-D)D_h'| |\leq | |C-C'| |\cdot || h||+| |D_{h}'| |\cdot | |C-D| |\leq 4\theta(n, \epsilon,C), \end{align*} hence, as $C$ and $D$ are diagonal, we deduce \begin{align*}| | CX_{h}-X_{h}D| |\leq 4\theta(n,\epsilon,C).\end{align*} This shows $|(X_{h})_{ij}(a_i-a_j)|\leq 4\theta(n, \epsilon,C)$ for $i\neq j$. As $X_{h}$ has all of its diagonal entries equal to $0$, it follows that $$||X_{h}||\leq \frac{4\theta(n, \epsilon,C)}{\underset{i\neq j}{\min}|a_i-a_j|}\leq \frac{4\epsilon^2}{\underset{i\neq j}{\min}|a_i-a_j|}\leq \frac{\epsilon}{25}.$$ Since $\frac{1}{2}\leq ||h'||\leq 2$ and $h'=(D_{h}+D_{h}')+X_{h}$ we also have $\frac{1}{4}\leq \big|\big|(D_{h}+D_{h}')^{\pm 1}\big|\big|\leq 4$. 

\par Finally, set $w:=\textup{I}_n+X_{h}(D_{h}+D_{h}')^{-1}=h'(D_{h}+D_{h}')^{-1}$ and note $C'=hDh^{-1}=h'D(h')^{-1}=wDw^{-1}$ and  $||w-\textup{I}_n||\leq \frac{\epsilon}{4}$. In particular, $||w^{-1}||\leq 1+\frac{\epsilon}{2}||w^{-1}||$, hence $||w^{-1}||\leq 2$ and $||w^{-1}-\textup{I}_n||\leq ||w^{-1}||\cdot ||w-\textup{I}_n||< \epsilon$. This concludes the proof of the lemma.\end{proof}

\end{appendix}

\bibliographystyle{siam}

\bibliography{biblio.bib}

\end{document}